\newtheorem{theorem}{Theorem}[section]
\newtheorem*{theorem*}{Theorem}
\newtheorem{lemma}[theorem]{Lemma}
\newtheorem{proposition}[theorem]{Proposition}
\newtheorem{corollary}[theorem]{Corollary}
\newtheorem* {cor*}{Corollary}
\newtheorem*{corollary*}{Corollary}
\newtheorem*{proposition*}{Proposition}
\newtheorem*{lemma*}{Lemma}
\newtheorem{lemdfn}[theorem]{Lemma/Definition}
\theoremstyle{definition}
\newtheorem{dfn}[theorem]{Definition}
\newtheorem{example}[theorem]{Example}
\theoremstyle{remark}
\newtheorem{remark}{Remark}
\newtheorem*{remark*}{Remark}
\newtheorem*{note*}{Note}
\newcommand{\Fp}{\mathbb{F}_p}
\newcommand{\Fq}{\mathbb{F}_q}
\newcommand{\Fpbar}{\overline{\mathbb{F}}_p}
\renewcommand{\P}{\mathbb{P}}
\newcommand{\proj}{\text{pr}}
\newcommand{\Osh}{\mathcal{O}}
\newcommand{\GL}{{\rm GL}}
\newcommand{\id}{\text{id}}
\newcommand{\Xo}{\overline{X}}
\newcommand{\leftp}{\left(}
\newcommand{\rightp}{\right)}
\newcommand{\ZZ}{{\mathbb{Z}}}
\newcommand{\Het}{H_{\normalfont\text{\'{e}t}}}
\newcommand{\Ind}{{\text{\normalfont ind}}}
\newcommand{\et}{{\normalfont\text{\'{e}t}}}
\newcommand{\Zar}{{\normalfont\text{Zar}}}
\newcommand{\supp}{{\normalfont\text{supp}}}
\newcommand{\K}{{\normalfont \text{Ker}}}
\newcommand{\St}{{\normalfont \text{St}}}
\DeclareMathOperator{\sheafhom}{\mathscr{H}\text{\kern -3pt {\textit{om}}}}
\DeclareMathOperator{\sheafdif}{\mathscr{D}\text{\kern -1pt {\textit{if}}}}
\title{Cohomology and geometry of Deligne--Lusztig varieties for $\GL_n$}
\author{Yingying Wang}
\begin{document}
\maketitle
	\begin{abstract}
  We give a description of the cohomology groups of the structure sheaf on smooth compactifications $\Xo(w)$ of Deligne--Lusztig varieties $X(w)$ for $\GL_n$, for all elements $w$ in the Weyl group. 
  As a consequence, we obtain the ${\rm mod}\ p^m$ and integral $p$-adic \'{e}tale cohomology of $\Xo(w)$. 
 Moreover, using our result for $\Xo(w)$  and  a spectral sequence associated to a stratification of $\Xo(w)$, we deduce 
the  ${\rm mod}\ p^m$ and integral $p$-adic \'{e}tale cohomology with compact support of $X(w)$.

 In our proof of the main theorem, in addition to considering the Demazure--Hansen smooth compactifications of $X(w)$, we show that a similar class of constructions provide smooth compactifications of $X(w)$ in the case of $\GL_n$. Furthermore, we show in the appendix that the Zariski closure of $X(w)$, for any connected reductive group $G$ and any $w$, has pseudo-rational singularities. 
 
	\end{abstract}

	\tableofcontents
	
\section{Introduction}
Let $G/\Fpbar$ be a connected reductive group defined over a finite field $\Fq$ and let $F$ be the Frobenius endomorphism. Deligne--Lusztig varieties were introduced in \cite{DL} for studying irreducible representations of $G(\Fq)$. The aim of this paper is to describe the cohomology groups of the structure sheaf for Deligne--Lusztig varieties for $\GL_n$ and their interpretations as $\GL_n(\Fq)$-representations. 
\subsection{General Background}

Fix a maximal torus $T^*$ and a Borel subgroup $B^*$ containing $T^*$ in $G$. 
A Deligne--Lusztig variety $X(w)$ is a locally closed subscheme of $G/B^*$ consisting of Borel subgroups $B$ whose relative position with $FB$
 is given by an element $w$ of the Weyl group $W$. As $\Fpbar$-schemes,  Deligne--Lusztig varieties are quasi-projective and smooth of dimension $l(w)$, which is the Bruhat length of $w$.

Deligne and Lusztig considered the virtual representations arising from the $\ell$-adic cohomology with compact support of $X(w)$ and their \'{e}tale coverings for $\ell\neq p$. They showed that any irreducible representation of $\GL_n(\Fq)$ is contained in one of such virtual representations \cite[\textsection 7]{DL}. In the same paper, 
they constructed smooth compactifications $\Xo(w)$ of $X(w)$ for each reduced expression of $w$ \cite[\textsection 9]{DL}, which are analogous to the Demazure--Hansen desingularization of Schubert varieties.

 \medskip

A prominent example of Deligne--Lusztig varieties in the case of $G=\GL_n$ is $X(\mathbf{w})$, where $\mathbf{w}$ corresponds to the standard Coxeter element. It is isomorphic to the complement of all $\Fq$-rational hyperplanes in the projective space $\mathbb{P}^{n-1}_{\Fpbar}$ (resp. $\mathbb{P}^{n-1}_{\Fq}$), which we denote by $\mathcal{X}^{n-1}_{\Fpbar}$ (resp. $\mathcal{X}^{n-1}_{\Fq}$.). 
This example was defined by Deligne and Lusztig \cite{DL}, which first appeared in \cite[Example 1]{Lusztig75}. 
In \cite{GK}, Gro{\ss}e-Kl{\"{o}}nne gave a vanishing result for the cohomology of sheaves of logarithmic differential forms on a smooth compactification of $\mathcal{X}^{n-1}_{\Fq}$. 

 	For $K$ a nonarchimedean local field, one may take the complement $\mathcal{X}^{n-1}_{\mathbb C_p}$ (resp. $\mathcal{X}^{n-1}_{K}$) 
 of all $K$-rational hyperplanes in the rigid analytic projective space $\mathbb P^{n-1}_{\mathbb C_p}$ (resp. $\mathbb P^{n-1}_{K}$). This space is preserved under $\GL_n(K)$-action and is an admissible open of the rigid analytic projective space. It was introduced by Drinfeld \cite[\textsection 6]{Drinfeld74} and often referred to as the $p$-adic Drinfeld half space when $K$ is a finite extension of $\mathbb Q_p$.  
 	
 	The $p$-adic Drinfeld half space admits a semistable weak formal model whose generic fibre recovers the $p$-adic Drinfeld half space. Moreover, 
 	a smooth compactification of $\mathcal{X}^{n-1}_{\Fq}$ lives in the special fibre of this weak formal model as 
 	an irreducible component cf. \cite{GK}, \cite{CDN}. The cohomology theories for certain coherent sheaves of the $p$-adic Drinfeld half space have been studied by Schneider and Stuhler \cite{ScSt}, Schneider and Teitelbaum \cite{ST97}, Gro\ss e-Kl\"{o}nne \cite{GK}, Orlik \cite{Orlik08}, and many others. 

\subsection{The individual $\ell$-adic cohomology for Deligne--Lusztig varieties}

In \cite{Orlik18}, Orlik provided a strategy for computing the $\ell$-adic cohomology groups with compact support of Deligne--Lusztig varieties for $G=\GL_{n}$. We briefly recall the methods of  loc. cit. which are relevant to us. 

Let $F^+$ be the free monoid generated by a fixed set of standard generators  of $W$. The generalized Deligne--Lusztig variety attached to $w\in F^+$ were introduced in \cite{Lusztig78}. It is a subscheme of $(G/B^*)^{r+1}$  with  relative positions of successive pairs of Borel subgroups determined by the standard generators in the expression of $w$. 
For the precise definition see Section \ref{section2}.

 Orlik 
studied the association between cohomology groups
 of $\Xo(w)$ and $\Xo(w')$, where $w,w'\in F^+$ and
 $w'$ is obtained via replacing a reduced subexpression of $w$ by another according to a relation in the group presentation of $W$ or via shifting the first standard generator in the expression of $w$ to the end. In particular, he constructed three operators $C, K, R$ on elements of $F^+$. As the construction of the operators is related to the relations in the group presentation of $W$ cf. Section \ref{steps}, one can always use finitely many of these operators to transform $w$ into either a standard Coxeter element in a standard Levi subgroup of $G$, or of the form $svs$, when $G=\GL_n$ \cite{GP}. 
 
 If we have $svs\in F^+$, then $\Xo(svs)\to \Xo(vs)$ is a $\mathbb P^1$-fibration. This let one relate the cohomology group of $\Xo(svs)$ with the one for $\Xo(vs)$, which is of dimension $\dim_{\Fpbar}(\Xo(svs))-1$. 
 
  Orlik then establishes a double induction procedure with respect to the length of $w$ and the number of operators applied. The base case is the cohomology of $\Xo(w)$ with $w\in F^+$ having no repeating terms in its expression. 
  As $\Xo(w)$ is universally homeomorphic to a finite disjoint union of products of smooth compactifications of $\mathcal{X}^{j}_{\Fpbar}$ with $j\leq n-1$, its $\ell$-adic \'{e}tale cohomology can be determined.

\subsection{Statement of results}
We adapt Orlik's method of double induction to the cohomology of the structure sheaf on $\Xo(w), w\in F^+$.
The cohomology of the structure sheaf for the smooth compactification $\Xo(s_1\cdots s_{n-1})$ of $X(\mathbf w)$ follows from a result of Gro\ss e-Kl\"{o}nne for the smooth compactification of $\mathcal{X}^{n-1}_{\Fq}$ after base changing to $\Fpbar$ {\cite[Theorem 2.3]{GK}}:

 		 \begin{theorem*}[Gro{\ss}e-Kl{\"{o}}nne]
 	Let $G=\GL_n$ and $\mathbf w =s_1\cdots s_{n-1}$ the standard Coxeter element. Then
 		\[\arraycolsep=2pt\def\arraystretch{1.2}
H^k\left(\Xo(s_1\cdots s_{n-1}), \Osh_{\Xo(s_1\cdots s_{n-1})}\right)=\left\{
\begin{array}{lc}
\Fpbar, & k=0,\\
0, & k>0.
\end{array}
\right.
\]
 \end{theorem*}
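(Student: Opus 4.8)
The plan is to deduce this from Gro\ss e-Kl\"onne's computation of the cohomology of logarithmic differential forms on a smooth compactification of $\mathcal X^{n-1}_{\Fq}$, specialised to differential forms of degree $0$ (so that $\Omega^0(\log)=\Osh$), together with flat base change along $\Fq\hookrightarrow\Fpbar$.

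\emph{Identifying the compactification.} First I would recall that $X(\mathbf w)$ is isomorphic to $\mathcal X^{n-1}_{\Fpbar}$, the complement of the $\Fq$-rational hyperplanes in $\P^{n-1}_{\Fpbar}$. I would then argue that the Deligne--Lusztig compactification $\Xo(s_1\cdots s_{n-1})$ attached to this reduced expression \cite[\S9]{DL} is isomorphic to $Y\times_{\Fq}\Fpbar$, where $Y$ is the smooth compactification of $\mathcal X^{n-1}_{\Fq}$ used in \cite{GK}: the iterated blow-up of $\P^{n-1}_{\Fq}$ along the strict transforms of the proper $\Fq$-rational linear subspaces, performed in order of increasing dimension (so that at each stage the centre is a disjoint union of smooth subvarieties of a smooth variety). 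Equivalently, one checks that the tower of $\P^1$-fibrations defining $\Xo(s_1\cdots s_{n-1})$ is the one underlying this iterated blow-up, with complement the evident normal crossings divisor $D$. Matching these two a priori different smooth compactifications of the same open variety is the step that requires care and is the main obstacle; it can be carried out by comparing the defining towers directly, or — if one only wants the cohomology of the structure sheaf — by invoking the birational invariance of $H^q(-,\Osh)$ among smooth projective varieties over a perfect field, which reduces the computation to a single smooth compactification of one's choosing.

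\emph{Applying Gro\ss e-Kl\"onne and descending the scalars.} With the identification in hand, \cite[Theorem 2.3]{GK} computes $H^q(Y,\Omega^p_Y(\log D))$; taking $p=0$ gives $\Omega^0_Y(\log D)=\Osh_Y$, so that $H^q(Y,\Osh_Y)=0$ for $q>0$, while $H^0(Y,\Osh_Y)=\Fq$ because $Y$ is proper and geometrically connected over $\Fq$. (For the structure sheaf this last assertion needs no input from \cite{GK}: writing the composite of blow-ups of Step~1 as $\pi\colon Y\to\P^{n-1}_{\Fq}$, each factor a blow-up of a smooth variety along a smooth centre, one has $R\pi_*\Osh_Y=\Osh_{\P^{n-1}_{\Fq}}$, whence $H^q(Y,\Osh_Y)=H^q(\P^{n-1}_{\Fq},\Osh)$ by the Leray spectral sequence.) Finally, since $\Fq\hookrightarrow\Fpbar$ is flat and $Y$ is proper over $\Fq$, flat base change gives $H^k\!\left(\Xo(s_1\cdots s_{n-1}),\Osh_{\Xo(s_1\cdots s_{n-1})}\right)=H^k(Y,\Osh_Y)\otimes_{\Fq}\Fpbar$, which is $\Fpbar$ for $k=0$ and $0$ for $k>0$, as asserted.
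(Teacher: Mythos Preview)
Your proposal is correct and matches what the paper does. The paper's own argument (Proposition~\ref{basecase}) takes your alternative route: it identifies $\Xo(\mathbf w)$ with the successive blow-up of $\P^{n-1}_{\Fpbar}$ along the $\Fq$-rational linear subschemes and then invokes the birational invariance of $H^k(-,\Osh)$ for smooth proper varieties over a perfect field \cite[Theorem~3.2.8]{CR11} to reduce to the cohomology of $\P^{n-1}_{\Fpbar}$; your primary route via \cite[Theorem~2.3]{GK} and flat base change is exactly what the paper records in the Remark immediately following that proposition.
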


 	 Our base cases are $\Xo(w)$, when $w$ is a standard Coxeter element for a standard Levi subgroup $L$ of $\GL_n$. If $L$ is a proper standard Levi subgroup, then  $\Xo(w)$ is a finite disjoint union, with each irreducible component isomorphic to the product of smooth compactifications of $\mathcal{X}^j_{\Fpbar}$ of dimensions $j<n-1$. In particular, the number of irreducible components  $\Xo(w)$ is given by the number of $\Fq$-rational points on $G/P$, where $P$ is the standard parabolic subgroup associated to $L$. The cohomology of  $\Xo(w)$ then also follows from the above theorem. 
 	 
 	 The double induction strategy then allows us to deduce the following result: 
 	 
 	 	\begin{theorem}\label{main}
Let $G=\GL_n$ and $w\in F^+$ with $w=s_{i_1}\cdots s_{i_r}$. Let $I=\{s_{i_1},\ldots , s_{i_r}\}$ and $P_I=B^*W_IB^*$ be the standard parabolic subgroup associated to $I$, then 

	\[\arraycolsep=2pt\def\arraystretch{1.2}
H^k\left(\Xo(w), \Osh_{\Xo(w)}\right)=\left\{
\begin{array}{lc}
\Ind_{P_I(\Fq)}^{\GL_n(\Fq)} \mathds{1}_{\Fpbar}, & k=0,\\
0, & k>0,
\end{array}
\right.
\]
where $\mathds{1}_{\Fpbar}$ is the trivial $P_I(\Fq)$-representation with coefficients in $\Fpbar$.   
\end{theorem}

If $w=svs$, we construct maps $\Xo(svs)\to \Xo(sv)$ and $\Xo(svs)\to \Xo(vs)$ and show that they give isomorphisms of cohomology groups with respect to the structure sheaf. These isomorphisms are $\GL_n(\Fq)$-equivariant. Consequently, they also induce isomorphisms on cohomology groups in the case of the cyclic shift operator $C$ cf. Definition \ref{operatorC}. 
The construction of these maps only relies on the fact that $\GL_n$ is split reductive, so this step applies to any split reductive connected groups defined over $\Fq$.

For the operators $K$ and $R$ corresponding to relations in the symmetric group cf. Definition \ref{operatorsKR}, we first construct an intermediate proper smooth scheme such that both $\Xo(w)$ and $\Xo(K(w))$ (resp. $\Xo(R(w))$) have birational morphism to it, then we use a theorem of Chatzistamatiou and R\"{u}lling to get the isomorphism of cohomology groups. The theorem \cite[Theorem 3.2.8]{CR11} shows that for proper smooth schemes over a perfect field, birational morphisms induce cohomological equivalence for the structure sheaf and the canonical bundle. In Appendix \ref{AppendixA}, we show that  $\overline{X(w)}$ has pseudo-rational singularities. For schemes with such singularities, there is a generalization of Chatzistamatiou--R\"{u}lling's result by Kov\'{a}cs \cite[Theorem 8.13]{Kovacs}. Hence the statements on the operators $K$ and $R$ follows from this and it generalizes to arbitrary connected reductive groups whenever there is an operator replacing a reduced subexpression of $w$ according to relation in the group presentation of $W$ that does not reduce the Bruhat length.

Since all steps in our version of the double induction induce isomorphisms on the cohomology groups as $\Fpbar$-vector spaces, and we have higher vanishing for the base case, 
we may analyze the $\GL_n(\Fq)$-action after we obtain the cohomology. In particular, 
 the global sections of the irreducible components are isomorphic to the base field, so they have to be trivial representations of their corresponding $P(\Fq)$. 

\medskip
 If we consider the sheaf of Witt vectors $W_m\left(\Osh_{\Xo(w)}\right)$ of length $m$ on $\Xo(w)$, our theorem implies that 
\[
H^0\left(\Xo(w), W_m\left(\Osh_{\Xo(w)}\right)\right)= \Ind_{P_I(\Fq)}^{\GL_n(\Fq)} W_m\left(\Fpbar\right).\]
Using the Artin--Schreier--Witt sequence we obtain the cohomology of the constant sheaves $\mathbb Z/p^m\mathbb Z$ and  $\mathbb Z_p$ on  $\Xo(w)_\et$, shown in Proposition \ref{ZpnZ} and Corollary \ref{Zpxo} below. 

\begin{corollary}
	Let $G=\GL_n$, and $w\in F^+$  with $w=s_{i_1}\cdots s_{i_r}$. Let $R$ be $\mathbb Z/p^m\mathbb Z, m\geq 1,$ or $\mathbb Z_p$. Then
	\[\arraycolsep=2pt\def\arraystretch{1.2}
\Het^k\left(\Xo(w), R\right)=\left\{
\begin{array}{lc}
\Ind_{P_I(\Fq)}^{\GL_n(\Fq)} \mathds{1}_{R}, & k=0,\\
0, & k>0,
\end{array}
\right.
\]
where $ \mathds{1}_{R}$ is the trivial $P_I(\Fq)$-representation with coefficients in $R$.
\end{corollary}

The higher vanishing of these cohomology groups are obtained inductively via the long exact sequence associated to the mod $p$ short exact sequence for $\mathbb Z/p^m  \mathbb Z$. The result for $\mathbb Z_p$-coefficients follows after verifying the Mittag--Leffler condition. 

\medskip

There is a spectral sequence associated to a stratification of $\Xo(w)$:
\[
E_1^{i,j}= \bigoplus_{\substack{ u\preceq w\\ \ell(u)=\ell(w)-i}} H^j_\et\left( \Xo(u), \mathbb Z/p^m\mathbb Z\right)	\Rightarrow H^{i+j}_{\et, c}\left(X(w),\mathbb Z/p^m\mathbb Z\right).
\]
The corollary above implies that this spectral sequence degenerates at the $E_2$-page. In particular, except at the $0$-th term, $E_1^{\bullet,0}$ is quasi-isomorphic to the Solomon--Tits complex mod $p$ cf. \cite{Orlik18}. The Solomon--Tits complex  is a simplicial complex constructed from the group $\GL_n(\Fq)$, whose $0$-th homology is $\mathbb Z$ and highest non-vanishing homology realizes the Steinberg representation over $\mathbb Z$. 
 This method has been used to study the compactly supported $\ell$-adic cohomology of $X(w)$  \cite[\textsection 5, \textsection 7]{Orlik18}, c.f. \cite[Ch VII]{DOR}. Using this method, we obtain the following results on the compactly supported cohomology for  $X(w)_\et$ with $\mathbb Z/p^m\mathbb Z$ and  $\mathbb Z_p$-coefficients
in Theorem \ref{modpnX(w)} and Corollary \ref{ZpX(w)}.

\begin{theorem}
Let $G=\GL_n$ and $w\in F^+$. Let $L_I\supseteq T^*$ be the standard Levi subgroup of $\GL_n$ such that $P_I=U_I\rtimes L_I$, where $U_I$ is the unipotent radical of $P_I$.  Let $R$ be $\mathbb Z/p^m\mathbb Z, m\geq 1$ or $\mathbb Z_p$.
Then  
	\[\arraycolsep=2pt\def\arraystretch{1.2}
H_{\et, c}^k\left(X(w),R\right)=\left\{
\begin{array}{lc}
0, & k\neq \ell(w),\\
\Ind_{P_I(\Fq)}^{\GL_n(\Fq)}\normalfont\text{St}_{L_I}, & k=\ell(w),
\end{array}
\right.
\]
 where $\normalfont\text{St}_{L_I}$ is the Steinberg representation for $L_I$ with coefficients in $R$. In particular, when $I=S$, we have $H^{\ell(w)}_{\et, c}\left(X(w),R \right)=\normalfont\text{St}_{\GL_n}$. 
\end{theorem}

Appendix \ref{AppendixA} provides some insights on the geometry of $\overline{X(w)}$ for arbitrary connected reductive group $G$ defined over $\Fq$. We show that they are strongly $F$-regular and thus have pseudo-rational singularities. This result is achieved using the fact that $\overline{X(w)}$ are associated to Zariski closure of Bruhat cells (resp. Schubert varieties) via the Lang map. Thus $\overline{X(w)}$ are $F$-regular because the the Zariski closure of Bruhat cells are $F$-regular \cite{BrionThomsen}.
It then follows from Kov\'{a}cs' theorem that the cohomology of $\overline{X(w)}$ is isomorphic to that of $\Xo(w)$ for the structure sheaf and the canonical bundle \cite{Kovacs}.

In fact, Lauritzen, Raben-Pedersen and Thomsen showed that Schubert varieties are not only strongly $F$-regular, but also globally $F$-regular \cite{LRT}.
A natural question to ask in this direction is that whether $\overline{X(w)}$ have globally defined singularities.

\subsection{Structure of the paper}
 
 In Section \ref{dfn} and \ref{section2} we recall basic definitions and properties of (generalized) Deligne--Lusztig varieties. Section \ref{sws} treats the cases used for the induction on the length of $w$. The cases used for the induction on the presentation of $w$ are studied in Section \ref{steps}. Section \ref{section5} establishes the base case of our induction in the case of $\GL_n$. Section \ref{theorem1} contains the proof of the main theorem \ref{main} and corollaries for the mod $p^m$ and $\mathbb Z_p$-cohomology groups.  In Section \ref{corollaryX(w)}, we give the proof for Theorem \ref{modpnX(w)} and Corollary \ref{ZpX(w)} on the compactly supported mod $p^m$ and $\mathbb Z_p$-cohomology of $X(w)$. Appendix \ref{AppendixA} examines the locally defined singularities of $\overline{X(w)}$ for any connected reducitve group $G$ and an application of a theorem in \cite{Kovacs}.

\subsection{Acknowledgements}
I thank my advisor, Sascha Orlik, for suggesting projects related to Deligne--Lusztig varieties, and for his guidance on during my PhD. 
I would like to thank  Kay R\"{u}lling for explaining his work \cite{CR11} to me during the writing of my thesis and suggesting the paper \cite{Kovacs} to me at my defense, which contributed to Appendix \ref{AppendixA}. Moreover, I wish to thank Pierre Deligne for answering my questions about  constructions used in \cite{DL}.  
I want to express my gratitude to the anonymous referee for many helpful remarks. 
Many thanks to Georg Linden and Thomas Hudson for giving detailed comments on earlier drafts of this paper. 
Thanks to S\'{a}ndor Kov\'{a}cs, George Lusztig, and Stefan Schr\"{o}er for helpful comments and discussions.

	\section{Deligne--Lusztig varieties}\label{dfn}
In this section we fix notations and conventions for Deligne--Lusztig varieties in general, and then specifically for the case of $G=\GL_n$ in Section \ref{GLdfn}. We will conclude this section with some examples of Deligne--Lusztig varieties for $\GL_n$. 

\subsection{Notations}\label{Gdefn}
Let $p$ be a prime number, and $q=p^r, r\geq 1$. Fix an algebraic closure $\Fpbar$ of the finite field $\Fp$ that contains the finite field $\mathbb F_q$. 
Here we recall some basic notions from the theory of reductive groups. The standard references we have used are \cite{DeGa}, \cite{Jantzen}, \cite{Humphreys} and \cite{SGA31}.

Let $G$ be a \emph{reductive algebraic $\Fpbar$-group} defined via base change by a connected reductive $\Fq$-group $G_0$. Let $F$ denote the \emph{Frobenius endomorphism} on $G$, obtained by extension from the Frobenius endomorphism of $G_0$. Denote with $G^F$ the fixed points of $G$ by the Frobenius. 

Note that the datum of a maximal torus, a Borel subgroup, and the Weyl group is unique up to unique isomorphism (cf. \cite[\textsection 1.1]{DL}).
Fix an \emph{$F$-stable Borel subgroup} $B^*$ of $G$ and an \emph{$F$-stable maximal torus}  $T^*$ such that $T^*\subseteq B^*$.

Let $W:=N(T^*)/T^*$ be the \emph{Weyl group}, where $N(T^*)$ is the normalizer of $T^*$ in $G$. At the same time, $W$ is the Weyl group of the root system of $T^*$, which  contains a set of simple roots that is in bijection with a set $S$ of generators of $W$. In the literature, elements of $S$ are sometimes called elementary reflections or simple reflections. We denote by $\ell(w)$ the \emph{Bruhat length} of $w\in W$. It is the minimal number $r$ such that $w$ can be written as the product $w=s_{i_1}\cdots s_{i_r}$, where $s_{i_j}\in S$, $j=1,\ldots,r$. Here we call $s_{i_1}\cdots s_{i_r}$ a \emph{reduced expression} of $w$. The \emph{Bruhat order} $\leq $ on $W$ is defined by: $w\leq v$ whenever $w,v\in W$ have reduced expressions $w=s_{i_1}\cdots s_{i_r}$ and $v=t_{1}\cdots t_{k},\  t_1,\ldots, t_k\in S$ such that $1\leq i_1\leq \cdots\leq i_r\leq k$, and $s_{i_j}=t_{i_j}$ for all $j=1,\ldots,r$ cf. \cite[\textsection 1.2.4]{GP}.  

We say that $G$ is \emph{split} when the maximal torus $T^*$ fixed above is a split maximal torus and is defined over $\Fq$. In this case the map $F:W\to W$ induced by the Frobenius is the identity \cite[\textsection 7.1.3, \textsection 4.3.1]{DM}.

Denote the \emph{opposite Borel subgroup} by $B^+$, recall that we have decompositions
 \[
 B^*=U^*T^*=U^*\rtimes T^*\hspace{0,5cm} B^+=U^+T^*=U^+\rtimes T^*,
 \]
 where $U^*$ and $U^+$ are the unipotent radicals of $B^*$ and $B^+$ respectively. 

Recall that the quotient $G/B^*$ exists in the category of $\Fpbar$-schemes \cite[III \textsection 3.5.4]{DeGa}, and that $G/B^*$ is an integral, projective and smooth scheme \cite[\textsection II.13.3]{Jantzen}.  
Since $\Fpbar$ is algebraically closed,  
the $\Fpbar$-rational points on $G/B^*$ correspond bijectively to the elements in $G\left(\Fpbar\right)/B^*\left(\Fpbar\right)$. 
Thus by Borel fixed point theorem, we know that the Borel subgroups of $G$ correspond to the $\Fpbar$-rational points on $G/B^*$ and they are all conjugate to $B^*$. 
Throughout this text, let $X$ be the set of all Borel subgroups of $G$ on which $G$ acts by conjugation. In particular, via the set theoretic identification between $G/B^*$ and $X$ given by $gB^*\mapsto gB^*g^{-1}$, one obtains a $\Fpbar$-scheme structure on $X$ with an $G$-action such that the identification $G/B^*\cong X$ is $G$-equivariant.
By abuse of notation, we write $gB^*\in G/B^*$, $B\in X$ or $x\in X$ for $\Fpbar$-rational points on $G/B^*$ and $X$ respectively, when there is no ambiguity.    
\subsection{Basic Constructions}
There is a left $G$-action on the product $X\times X$ given by the diagonal action:
\[\begin{array}{rcl}
	G\times\left(X\times X\right) &\longrightarrow & X\times X\\
	\left(g,\left(B_1,B_2\right)\right) &\longmapsto & \left(gB_1g^{-1}, gB_2g^{-1}\right).
\end{array}
\]
 The quotient $G\backslash \left(X\times X\right)$ is in bijection with the Weyl group $W$ as a result of the Bruhat decomposition \cite[\textsection 2.11]{BT65}. For each $w\in W$, the orbit $O(w)$ is of the form $G.\left(B^*, \dot{w}B^*\dot{w}^{-1}\right)$, where $\dot{w}\in N(T^*)$ is a representative of $w$. In particular, the orbit corresponding to the identity $e\in W$ is $O(e)\cong X$. We refer to \cite[\textsection 1]{DL} for basic properties of the orbits $O(w)$. Since $F:G\to G$ induces an automorphism on $X=G/B$, let $\Gamma_F\subseteq X\times X$ be the graph of  $F:X\to X$.

\begin{dfn}
\normalfont
 The \emph{Deligne--Lusztig variety} $X(w)$ for $G$ corresponding to $w\in W$ is defined as the intersection in $X\times X$ of $O(w)$ and the graph of $F$.
\[
X(w):= O(w)\times_{\left(X\times X\right)}\Gamma_F.
\] 
\end{dfn}

\begin{remark}
Note that this intersection is transverse \cite[p. 107]{DL}. Moreover,
	 the set of (rational) points of $X(w)$ corresponds to the subset of  Borel subgroups $B$ in $X$ such that $B$ and $F(B)$ are in relative position $w$, i.e.,
 \begin{equation*}
 	X(w)=\{B\in X\vert(B,F(B))\in O(w)\}.
 \end{equation*}
 			Additionally,  $X(w)$ is  a subscheme of $X$ of dimension $\ell(w)$ that is locally closed and smooth.
	 Considered as a subscheme of $X$, the Deligne--Lusztig variety $X(w)$ is stable under the $G^F$-action. Thus we have a $G^F$-action on $X(w)$.
\end{remark}

Deligne--Lusztig varieties may alternatively be defined as follows  cf. \cite[Example 3.10 (c)]{Lusztig78}.
\begin{dfn}
\normalfont
	Let  $w\in W$ and $w=s_{i_1}\cdots s_{i_r}$ with $s_{i_j}\in S$ be a reduced expression. The \emph{Deligne--Lusztig variety} associated to this reduced expression  is defined as:
		\begin{equation*}
			X(s_{i_1},\ldots,s_{i_r}):= \left\{(B_0,\ldots,B_r)\in X^{r+1}\vert (B_{j-1}, B_j)\in O(s_{i_j}), j=1,\ldots,r, FB_0=B_r
	\right\}.
		\end{equation*}

\end{dfn}

\begin{remark}
The definition above is independent of the reduced expression of $w$  up to canonical isomorphisms \cite[Example 3.10 (c)]{Lusztig78} cf. \cite[p. 759]{DMR}.

	 \end{remark}

\subsection{Smooth compactifications} In general, the Zariski closure $\overline{X(w)}$ of $X(w)$ in $X$ is not smooth. For each reduced decomposition $w=s_{i_1}\cdots s_{i_r}$ of $w\in W$ with $s_{i_j}\in S$, $j=1,\ldots,r$, we have a smooth compactification $\overline{X}(w)$ of $X(w)$ with a normal crossing divisor at infinity \cite[\textsection 9.10]{DL} defined as follows:

\begin{dfn}\label{smoothcompactifi}
\normalfont
	Let $w\in W$, and let  $w=s_{i_1}\cdots s_{i_r}$ be a reduced expression with $s_{i_j}\in S$, $j=1,\ldots,r$. 	We define 	
		\[
	\overline{O}(s_{i_1},\ldots,s_{i_r}):=\left\{(B_0,\ldots,B_r)\in X^{r+1}\vert (B_{j-1},B_j)\in \overline{O(s_{i_j})}, j=1,\ldots,r \right\},
	\]
	where $\overline{O(s_{i_j})}=O(s_{i_j}){\dot\cup} O(e)$, and
	\[
	\overline{X}(s_{i_1},\ldots,s_{i_r}):=\overline{O}(s_{i_1},\ldots,s_{i_r})\times_{\left(X\times X\right)}\Gamma_F,
	\]
	where $\overline{O}(s_{i_1},\ldots,s_{i_r})\to X\times X$ is the projection map $(B_0,\ldots,B_r)\mapsto (B_0, B_r)$. 
	
	We also use the notation $\Xo(w)$ for $\overline{X}(s_{i_1},\ldots,s_{i_r})$ when the reduced expression $w=s_{i_1}\cdots s_{i_r}$ is specified. 
\end{dfn}

\begin{remark}
	
	  We may alternatively write
	  \begin{multline*}
	  	\overline{X}(s_{i_1},\ldots,s_{i_r})=\\
	  	\left\{(B_0,\ldots,B_r)\in X^{r+1}\vert (B_{j-1},B_j)\in \overline{O(s_{i_j})}, j=1,\ldots,r, FB_0=B_r\right\}.
	  \end{multline*}
\end{remark}

\subsection{Affiness and Irreducibility}
\subsubsection*{Affineness}

 (i) Let $G/\Fpbar$ be any connected reductive group that is defined over $\Fq$. Let $h$ be the Coxeter number of $G$, which is the Bruhat length of any Coxeter element of $W$. If $q>h$, then $X(w)$ is affine for any $w\in W$ \cite[Theorem 9.7]{DL}.

\noindent (ii) If $w\in W$ is a Coxeter element, then $X(w)$ is affine \cite[Corollary 2.8]{Lusztig}.

\subsubsection*{Irreducibility}
Whether a Deligne--Lusztig variety is irreducible is completely dependent on the support of the corresponding Weyl group element. 

(i) When $w\in W$ is a Coxeter element, then $X(w)$ is irreducible \cite[Proposition 4.8]{Lusztig}. For the non-split groups, we say that $w$ is a Coxeter element if $w$ is a product of elements of $S$ which come from distinct $F$-orbits.

	(ii) For $w\in W$, $X(w)$ is irreducible if $w$ is not contained in any proper $F$-stable parabolic subgroup of $W$.  There are many proofs for this statement. Here we refer to \cite{BR} and \cite{Goertz09}. 
	Note that for split reductive groups $G$, this equivalent of saying that $X(w)$ is irreducible if  the support of $w$ is $S$ (cf. Definition \ref{support}). 
	
	 In addition, $X(w)$ has the same number of irreducible components as its smooth compactification $\Xo(w)$ (when one fixes a reduced expression of $w$).

	(iii) Let $v,w\in W$ such that 
	\[
	\supp(v)=\supp(w),
	\] 
	then $X(w)$ and $X(v)$ have the same number of irreducible components. By \cite[Proposition 2.3.8]{DMR}, one may write down the irreducible components, as well as the number of irreducible components.

	\subsection{Fibrations over $X$}\label{fibrations}

We recall basic constructions and observations from \cite[1.2(b)]{DL}. In this section we use the notations from Section \ref{Gdefn}. We say a morphism of $\Fpbar$-schemes $f:Y_1\to Y_2$ is a bundle with fibre $E$, if $Y_2$ admits an open covering with respect to a (Grothendieck) topology $\tau$ (e.g. Zariski open covering, fppf open covering) such that all (closed) fibres of $f$ are isomorphic to $E$ and $f$ is locally trivial with respect to this covering. 

 Let $\pi:G\to G/B^*$ be the canonical projection map. The Zariski open covering $\{\dot{w}U^+B^*\}_{w\in W}$  of $G$ gives a Zariski open covering $\{\pi(\dot{w}U^+B^*)\}_{w\in W}$ of $G/B$ cf. \cite[II \textsection 1.10]{Jantzen}. 

\begin{lemma}
Let $s\in W$ be a simple reflection. The projection map $\proj_i:O(s)\to X, i=1,2$ is an
	 (Zariski) $\mathbb A^1$-bundle with respect to the open covering $\{\pi(\dot{w}U^+B^*)\}_{w\in W}$  of $X$.
	 	Moreover, $O(s)$ is isomorphic to the homogeneous $G$-space $G/(B^*\cap\dot s B^*\dot s)$ over $G/B^*$, whose fibres are isomorphic to $B^*/(B^*\cap \dot{s}B^*\dot{s})\cong \mathbb A^1$. 
\end{lemma}

\begin{remark}
	In the situation of the lemma above, the fibre $B^*/(B^*\cap\dot s B^*\dot s)$ has a natural group action coming from $\normalfont\text{Stab}_G(B^*)=B^*$. Since the automorphism group of $\mathbb A^1$ is the affine group $\mathbb G_a\rtimes \mathbb G_m$, there is a morphism  of group schemes $B^*\to \mathbb G_a\rtimes \mathbb G_m$. This morphism is in fact surjective. 
\end{remark}

The Zariski closure  $\overline{O(s)}$ of $O(s)$ in $X\times X$ admits a decomposition into the union of $O(s)$ and $O(e)$:
\[
\overline{O(s)}=\left\{ (B_0,B_1)\in X\times X\vert (B_0, B_1)\in O(s), \text{ or } (B_0, B_1)\in O(e)\right\}.
\]
Note that we have an identity section of the projection maps $\proj_i:\overline{O(s)}\to X, i=1,2,$ given by $X\cong O(e)\to \overline{O(s)}$. 
The complement of this section gives us $\proj_i: O(s)\to X$. We recall the following lemma.

\begin{lemma}\label{pr1triv}
	Let $s\in W$ be a simple reflection. The projection maps $\proj_i:\overline{O(s)}\to X, i=1,2,$ are a $\mathbb P^1$-bundles and they are locally trivial with respect to the Zariski open covering $\{\pi(\dot{w}U^+B^*)\}_{w\in W}$  of $X$.
	\end{lemma}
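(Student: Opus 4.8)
The plan is to bootstrap from the two preceding lemmas on $O(s)$, reducing the statement about $\overline{O(s)}$ to them. First I would recall that $\overline{O(s)}$ carries the identity section $\sigma\colon X\cong O(e)\hookrightarrow\overline{O(s)}$ of each projection $\proj_i$, and that the complement of (the image of) $\sigma$ is exactly $O(s)$. By Lemma \ref{A1bundle}, $O(s)$ is $G$-equivariantly the homogeneous space $G/(B^*\cap\dot sB^*\dot s)$ over $G/B^*=X$, whose fiber $B^*/(B^*\cap\dot sB^*\dot s)\cong\mathbb A^1$ carries the natural $B^*=\operatorname{Stab}_G(B^*)$-action described in the remark (namely $b.y=\alpha(b)y+\mu(b)$, which factors through $\mathbb G_a\rtimes\mathbb G_m=\operatorname{Aut}(\mathbb A^1)$). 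The point is that the same $G$-equivariant picture extends to $\overline{O(s)}$: one has $\overline{O(s)}\cong G/(B^*\cap\dot sB^*\dot s)$ compactified fiberwise, i.e. $\overline{O(s)}\cong G\times^{B^*}(B^*\backslash P\,/\,(B^*\cap\dot sB^*\dot s))$ where $P=B^*\cup B^*sB^*$, and the fiber is the $\overline{L_P}$-homogeneous curve $X_{\overline{L_P}}\cong\mathbb P^1_{\Fpbar}$ obtained by adding the $B^*$-fixed point at infinity to $\mathbb A^1$. Concretely, I would exhibit the fiber of $\proj_1$ over a Borel $B\in X$ as the set of Borels $B'$ with $(B,B')\in\overline{O(s)}$, i.e. $B'$ lying in the parabolic $P_B\supseteq B$ of type $s$; this is the flag variety of $\overline{L_{P_B}}\cong\mathrm{PGL}_2$, hence $\cong\mathbb P^1$, and it contains the $\mathbb A^1=O(s)$-fiber as the complement of the section point $B$.

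Next I would prove Zariski local triviality with respect to the covering $\{\pi(\dot wU^+B^*)\}_{w\in W}$. For this I mimic the proof of Lemma \ref{pr1triv} verbatim: using the isomorphism $\varphi\colon G\times^{B^*}G/B^*\xrightarrow{\sim}G/B^*\times G/B^*$, the subvariety $\overline{O(s)}$ corresponds to $G\times^{B^*}\overline{B^*sB^*}/B^*$, where $\overline{B^*sB^*}/B^*=B^*sB^*/B^*\,\dot\cup\,\{eB^*\}\cong\mathbb P^1$ is the closure of the Schubert cell inside the rank-one flag variety $P/B^*\cong\mathbb P^1$. Pulling back along $\proj_1$, one gets $\proj_1^{-1}(\pi(\dot wU^+B^*))\cong\dot wU^+B^*\times^{B^*}\overline{B^*sB^*}/B^*$, and since $\pi\colon G\to G/B^*$ is Zariski locally trivial over this covering \cite[II \S1.10]{Jantzen}, associating the $B^*$-variety $\overline{B^*sB^*}/B^*$ gives \cite[I \S5.16]{Jantzen} a trivialization $\dot wU^+B^*\times^{B^*}\overline{B^*sB^*}/B^*\xrightarrow{\sim}\pi(\dot wU^+B^*)\times\mathbb P^1$. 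Since $\overline{B^*sB^*}/B^*\cong X_{\overline{L_P}}\cong\mathbb P^1_{\Fpbar}$, this shows $\proj_1$ is a $\mathbb P^1$-bundle, locally trivial for the stated covering. The case $i=2$ is symmetric, conjugating the whole construction by $\dot s$ or equivalently applying the flip on $X\times X$.

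The main obstacle I anticipate is purely bookkeeping rather than conceptual: identifying cleanly the compactified fiber $\overline{B^*sB^*}/B^*$ with $\mathbb P^1$ together with its $B^*$-action as the scheme-theoretic closure inside $P/B^*$, and checking that this closure really is the fiber of $\proj_i\colon\overline{O(s)}\to X$ (i.e. that forming $\overline{O(s)}$ as the Zariski closure in $X\times X$ commutes with the local trivialization). This amounts to verifying that $\overline{O(s)}$ is the schematic image of $G\times\overline{B^*sB^*}/B^*$ under the evident map and that it is reduced — which follows because $O(s)$ is smooth irreducible of dimension $\dim X+1$ and adding the section $O(e)$ of dimension $\dim X$ is precisely the codimension-one boundary, matching the boundary $\{eB^*\}$ of $\overline{B^*sB^*}/B^*$ in the $\mathbb P^1$-fiber. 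Everything else is a transcription of the arguments already given for the $\mathbb A^1$-bundle statements.
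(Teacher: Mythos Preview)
Your proposal is correct and takes essentially the same approach as the paper: the core argument in your second paragraph matches the paper's proof almost verbatim --- identify $\overline{O(s)}$ with $G\times^{B^*}\overline{B^*sB^*}/B^*$ via $\varphi$, note $\overline{B^*sB^*}/B^*\cong\mathbb P^1$, and invoke local triviality of $\pi:G\to G/B^*$ together with \cite[I \S5.16]{Jantzen}, with $i=2$ symmetric. Your additional conceptual framing via the parabolic $P$ and $X_{\overline{L_P}}$ also appears in the paper, as a remark immediately following the lemma giving the alternative description $\overline{O(s)}\cong G/B^*\times_{G/P^*}G/B^*$; the bookkeeping worry you flag does not arise.
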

	\begin{proof}
		Let $P^*:=B^*sB^*\cup B^*$.  We have a cartesian diagram of $\Fpbar$-schemes:
	\begin{equation}\label{GP*}
\begin{tikzcd}
\overline{O(s)}\arrow[r, "\proj_2"]\arrow[d,"\proj_1"'] 
& G/B^* \arrow[d, "\pi_P"] \\
G/B^*\arrow[r, "\pi_P"]
& G/P^*,
\end{tikzcd}		
	\end{equation}
where  $\pi_P: G/B^*\to G/P^*$ is the projection map. All maps in this diagram are $G$-equivariant. Recall that $\pi_P$ is locally trivial with respect to the the (Zariski) open cover $\{\pi'(\dot w U^+B^*)\}_{w\in W}$ of $G/P^*$ \cite[\textsection II.1.10 (5)]{Jantzen}, where $\pi':G\to G/P^*$ is the canonical projection map. 
Since $\proj_i, i=1,2,$ is the base change of $\pi_P$, it  is a Zariski locally trivial fibre bundle. In particular, the fibres of $\proj_i$ are isomorphic to $P^*/B^*\cong \mathbb P^1$.
	\end{proof}

\subsection{The induced finite group action on the cohomology groups}\label{Gequiv}
For this section, let $\Gamma$ be a finite group and let $Y$ be a $\Fpbar$-scheme with $\Gamma$-action.  Following \cite[Definition 1.6]{MFK}, we explain how $\Gamma$ acts on the cohomology groups of $\Gamma$-equivariant $\Osh_Y$-modules.
\begin{dfn}
\normalfont
	Let $Y$ be a $\Fpbar$-scheme with $\Gamma$-action $\sigma:\Gamma\times Y\to Y$, and let $\mathcal V$ be an invertible sheaf of $\Osh_Y$-modules. Denote by $\mu:\Gamma\times \Gamma\to \Gamma$ the multiplication. A \emph{$\Gamma$-linearization} of  $\mathcal V$ constists of the datum of  an isomorphism of sheaves of  $\Osh_{\Gamma\times Y}$-modules,
	\[
	\phi: \sigma^*\mathcal V\overset{\sim}{\longrightarrow}\proj_2^*\mathcal V,
	\]
	such that $\left.\phi\right\vert_{\{1\}\times Y}$ is the identity and the cocycle condition on $\Gamma\times \Gamma\times Y$
	\[
	\left(\proj_{2,3}^*\phi\right)\circ\left( (\id_{\Gamma}\times \sigma)^*\phi\right)=(\mu\times \id_Y)^*\phi
	\]
	is satisfied.
\end{dfn}

We say that $\mathcal V$ is $\Gamma$-equivariant if it possesses a $\Gamma$-linearization.

\begin{example}
 (i)
  For $\mathcal V=\Osh_Y$, we naturally have $\sigma^*\Osh_Y\cong \Osh_{\Gamma\times Y}$ and $\proj_2^*\Osh_Y\cong\Osh_{\Gamma\times Y}$. Thus $\phi$ is given by the composition of these two isomorphisms. The cocycle condition follows because the pullback of the structure sheaf is the structure sheaf.  
Thus $\Osh_Y$ is $\Gamma$-equivariant.

 (ii) There is a natural morphism of sheaves of  $\Osh_{\Gamma\times Y}$-modules:
\[
\sigma^*\Omega^1_{Y}\overset{}{\longrightarrow}\Omega^1_{\Gamma\times Y}.
\]
Note that the projection map induces a projection map of sheaves of  $\Osh_{\Gamma\times Y}$-modules:
\[
\Omega^1_{\Gamma\times Y}\longrightarrow \proj_2^*\Omega^1_{Y}.
\]
The composition yields a morphism of $\Osh_{\Gamma\times Y}$-modules $\phi: \sigma^*\Omega^1_{Y}\to \proj_2^*\Omega^1_{Y}$. By checking on the level of stalks, one sees that $\phi$ is an isomorphism and the cocycle condition is satisfied. 

(iii) Let $r$ be an integer with $0<r\leq \dim_{\Fpbar}Y$. One shows that $\Omega_Y^{r}$ is $\Gamma$-equivariant by following the steps above and taking $r$-th exterior powers. Note that exterior powers commute with taking the inverse image. 

\end{example}

	We conclude this subsection with an observation that we will use later.  Suppose $H^0(Y,\Osh_Y)=\Fpbar$. Then $H^0(Y,\Osh_Y)$ is the trivial $\Gamma$-representation. Indeed, for $g\in \Gamma$, we have an isomorphism $\Osh_Y\overset{\sim}{\rightarrow}g_*\Osh_Y$.	Let $\varphi\in \Osh_Y (Y)$, for all $y\in Y$, 
\[
g.\varphi(y)=\varphi(g^{-1}.y).
\]
As we have $\Osh_Y(Y)=\Fpbar$, any $\varphi \in \Osh_Y(Y)$ is constant, and so $g.\varphi=\varphi$. Therefore $\Gamma$ acts on  $H^0(Y,\Osh_Y )$ trivially.

\subsection{Conventions for $G=\GL_n$}\label{GLdfn}

In the following, we will specifically consider the case  $G=\GL_n$. In this case,  we have $G^F={\GL_n}(\mathbb F_q)$.   
Let $T^*\subseteq \GL_n$ be the maximal torus such that $T^*(\Fpbar)$ corresponds to the diagonal matrices and let $B^*\subseteq \GL_n$ be the Borel subgroup such that $B^*(\Fpbar)$ corresponds to the upper triangular matrices.
Thus the $T^*\subseteq B^*$ we fixed are $F$-stable, so the Frobenius $F$ acts as the identity on $W$ cf.\cite[\textsection 4.2]{DM}.

\smallskip

There is a canonical isomorphism between the Weyl group $W$ associated to $T^*\subseteq B^*$ as above and the symmetric group $S_n$, in the sense that the elements of $W$ acts on $T^*(\Fpbar)$ by permuting the diagonal entries. We fix a set of generators $S:=\{s_1,\ldots,s_{n-1}\}$ of $W$ such that $s_i$ acts on the elements of $T^*(\Fpbar)$ by permuting the $i$-th and $(i+1)$-th entries.  Any product of the $s_i$'s in $W$ such that each $s_i$ shows up exactly once is called a Coxeter element. In particular, we call the product $s_1\cdots s_{n-1}$ the standard Coxeter element, which we denote by $\mathbf w$. Let $\{\alpha_1,\ldots,\alpha_{n-1}\}$ be the (positive) simple roots with each $\alpha_i$ corresponding to $s_i$. The the unipotent subgroup $U_{-\alpha_i}$ has $\Fpbar$-rational points consisting of matrices whose only nonzero entries lie on the diagonal and the $(i+1, i)$-th entry, with the entries on the diagonal all $1$'s. Moreover, each $U_{-\alpha_i}$ is isomorphic to the additive group $\mathbb G_a$. 
\smallskip

For any subset $I\subseteq S$, denote $W_I$ the subgroup of $W$ generated by $I$. We define the associated standard parabolic subgroup 
 cf. \cite[Theorem 29.2]{Humphreys}:
\[
P_I=B^*W_IB^*:=\bigcup_{w\in W_I}B^*\dot{w}B^*.
\]
Let $L_I$ be the standard Levi subgroup containing $T^*$ such that we have a Levi decomposition 
\[
P_I\overset{\sim}{\longrightarrow}U_I\rtimes L_I,
\]
where $U_I$ is the unipotent radical of $P_I$.

\subsection{Examples of Deligne--Lusztig varieties for $\GL_n$}\label{dhsandmore}

 Let $G=\GL_{n}$.  
 	Note that  the standard Coxeter element $\mathbf w$  corresponds to the $n$-cycle $(1,\ldots,n)$ in the symmetric group on $n$-elements. Recall from \cite[\textsection 2]{DL} that $ X(\mathbf w)$ can be identified with 
	the following subspace of the complete flag variety $X$:
	\[
	\left\{
	D_\bullet\Bigg\vert\dim_{\Fpbar} D_i=i,\ D_0=\{0\}, \ D_i=\bigoplus_{j=1}^{i} F^{j-1} D_1, \ i=1,\ldots,n-1, \ D_{n}=\Fpbar^{n}
	\right\}.
	\]
	Via the projection $D_\bullet\mapsto D_1$, one obtains an embedding of $X(\mathbf w)$ into $\mathbb P^{n-1}_{\Fpbar}$. In the notation of algebraic groups, this embedding is obtained from the projection $G/B^*\to G/P_I$, where  $I=\{s_2,\ldots, s_{n-1}\}$. Note that $G/P_I$ is isomorphic to $\mathbb P^{n-1}_{\Fpbar}$. In the former notation, one easily sees that there is an $\GL_n(\Fq)$-equivariant isomorphism
 \[
 X(\mathbf w)=X(s_1\cdots s_{n-1})\overset{\sim}{\longrightarrow} \mathbb P^{n-1}_{\Fpbar}-\mathcal H,
 \]
 where $\mathcal H$ is the union of all $\Fq$-rational hyperplanes in $\mathbb P^{n-1}_{\Fpbar}$. We denote the scheme on the right hand side by $\mathcal{X}^{n-1}_{\Fpbar}$.

  The smooth compactification $\overline{X}(\mathbf w)$ associated to the expression $\mathbf w=s_1\cdots s_{n-1}$ is isomorphic to the successive blow up $\widetilde{\mathcal{X}}^{n-1}_{\Fpbar}$ of $\mathbb P^{n-1}_{\Fpbar}$ along all $\Fq$-rational linear subschemes \cite[\textsection 4.1]{Ito} \cite[\textsection 4.1.2]{HWang}, cf. \cite[\textsection 2.5]{Linden}:
 \[
\widetilde{\mathcal{X}}^{n-1}_{\Fpbar}:=Y_{n-1}\longrightarrow Y_{n-2}\longrightarrow\cdots\longrightarrow Y_{-1}= \mathbb P^{n-1}_{\Fpbar},
 \]
where $Y_{i}\to Y_{i-1}$ is the blow up of $Y_{i-1}$ along the strict transform of all $\Fq$-rational linear subschemes $H\subseteq \mathbb P^{n-1}_{\Fpbar}$ with $\dim H=i$. The maps $Y_{i}\to Y_{i-1}$  are $\GL_n(\Fq)$-equivariant, so the map $\widetilde{\mathcal{X}}^{n-1}_{\Fpbar} \rightarrow \mathbb P^{n-1}_{\Fpbar}$ is equivariant under $\GL_n(\Fq)$-action. 

\medskip

Let $w\in W$ such that $w< \mathbf{w}$. Then $X_{\GL_n}(w)$ is isomorphic to a disjoint union of products of $\mathcal{X}^{j}_{\Fpbar}$ with $j< n-1$. This isomorphism extends to the corresponding smooth compactifications. We will discuss this example in more detail in Section \ref{levi}.

\section{The Weyl group and generalized Deligne--Lusztig varieties}\label{section2}
Assume $G$ to be split. 
The goal of this section is to recall some constructions related to the Weyl group $W$, and to give the definition of generalized Deligne--Lusztig varieties associated to an element of the free monoid $F^+$ (resp. $\hat F^+$). 
\subsection{Conjugacy classes and cyclic shifting} 

We start with reviewing some definitions and theorems from \cite[\textsection 3]{GP}.

\begin{dfn}\label{cyclicshiftdfn}
\normalfont
	We say $w,w'\in W$ are \emph{conjugate by cyclic shifts} when there exists a sequence of elements $v_0,\ldots,v_m\in W$ such that $v_0=w, v_m=w'$ and for all $i=1,\ldots,m$, we have $x_i, y_i\in W$ such that $v_{i-1}=x_iy_i$, $v_i=y_ix_i$, and $\ell(v_{i-1})=\ell(x_i)+\ell(y_i)=\ell(v_i) $. 
\end{dfn}

\begin{remark}
	Note that if $w, w'\in W$ are conjugate by cyclic shifts, then $\ell(w)=\ell(w')$. 
\end{remark}

The following theorem is from \cite[Theorem 3.1.4]{GP}.
\begin{theorem}[Geck--Pfeiffer]\label{gpcycl}
	Any two Coxeter elements of $W$ are conjugate by cyclic shifts. 
\end{theorem}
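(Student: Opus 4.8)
\textbf{Proof proposal for Theorem~\ref{gpcycl} (Geck--Pfeiffer).}

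The plan is to reduce the statement to a single key combinatorial fact: any Coxeter element of $W$ has minimal length in its conjugacy class, and then invoke the general cyclic-shift theory of \cite{GP}. First I would recall that a Coxeter element $c = s_{\sigma(1)}\cdots s_{\sigma(n-1)}$ (one factor from each of the generators in $S$, in some order) has Bruhat length exactly $|S|$, since the expression is reduced: this follows because the number of positive roots sent to negative roots by $c$ equals $|S|$, a standard computation with the reflection representation. Hence every Coxeter element lies in the set $\mathcal{O}_{\min}$ of minimal-length elements of its conjugacy class $\mathcal{O}$. The second input is the theorem of Geck--Pfeiffer (or Geck--Michel) that for any conjugacy class $\mathcal{O}$ in a finite Coxeter group, any two elements of $\mathcal{O}_{\min}$ are connected by a chain of cyclic shifts in the sense of Definition~\ref{cyclicshiftdfn}; this is precisely the abstract cyclic-shift connectivity result available from \cite[\S 3]{GP}.

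So the remaining point is that all Coxeter elements of $W$ are conjugate to one another in $W$ — once that is established, they all lie in the \emph{same} $\mathcal{O}_{\min}$ and the previous paragraph finishes the argument. For this I would argue by induction on $|S|$, or more directly: it suffices to show that swapping two \emph{adjacent} factors in the defining word of a Coxeter element yields a conjugate Coxeter element, because any permutation of the factors is achieved by a sequence of adjacent transpositions. If $s$ and $s'$ are the two adjacent factors and they commute, the swap changes nothing. If they do not commute, write $c = u\, s s'\, v$ with $\ell(c) = \ell(u) + 2 + \ell(v)$; conjugating by $u^{-1}$ moves $u$ to the other end (this is a cyclic shift since lengths add), producing $s s' v u$, and then conjugating again by $s$ gives $s' v u s$, which is the Coxeter element with $s$ and $s'$ transposed. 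Iterating, every Coxeter element is obtained from a fixed one by cyclic shifts, hence is conjugate to it; combined with the fact that each such element already has minimal length in the common class, \emph{all} Coxeter elements form a single cyclic-shift-connected set.

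Actually the argument of the last paragraph already \emph{proves} the theorem directly without separately invoking the abstract $\mathcal{O}_{\min}$-connectivity result: the chain of cyclic shifts transforming one Coxeter element into another is constructed explicitly by the adjacent-transposition procedure. I would present it that way, making the two conjugation steps above into the displayed cyclic-shift moves
\[
u\,(ss')\,v \;\leadsto\; (ss')\,v\,u \;\leadsto\; s'\,v\,u\,s,
\]
and then checking that at each move the length is additive (using that $c$ is reduced and that deleting or moving a prefix/suffix of a reduced word keeps it reduced). The main obstacle — really the only nontrivial point — is verifying these length-additivity conditions carefully, i.e.\ that when we peel off $u$ from the front of the reduced word $u\,(ss')\,v$ and re-attach it at the back, the resulting word $(ss')\,v\,u$ is still reduced of the same length; this is where one must use that a Coxeter element has a \emph{unique} support-exhausting reduced structure and standard exchange-condition arguments, rather than anything deep. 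Everything else is bookkeeping over the finitely many adjacent swaps needed to pass between two given orderings of $S$.
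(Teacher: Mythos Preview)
The paper does not prove this theorem; it is quoted without proof from \cite[Theorem 3.1.4]{GP}, so there is no paper argument to compare against.

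Your direct argument has a genuine gap. The displayed chain
\[
u\,(ss')\,v \;\leadsto\; (ss')\,v\,u \;\leadsto\; s'\,v\,u\,s
\]
does \emph{not} produce the Coxeter element with $s$ and $s'$ transposed: that element is $u\,s's\,v$, and it is not equal to $s'\,v\,u\,s$ in general. For a concrete instance in $W=S_5$, take $u=s_4$, $s=s_1$, $s'=s_2$, $v=s_3$; your chain ends at $s_2s_3s_4s_1$, whereas the transposed word is $s_4s_2s_1s_3$, and one checks these are distinct $5$-cycles. What your two moves actually establish is only that a Coxeter word is cyclic-shift-equivalent to each of its cyclic rotations---which is immediate, since any product of pairwise distinct simple reflections is automatically reduced (so the length-additivity you flag as the ``main obstacle'' is in fact automatic). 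Cyclic rotations alone do not act transitively on all orderings of $S$, so the adjacent-transposition strategy as written does not close. One can repair it by combining cyclic rotations with swaps of \emph{commuting} adjacent letters (which leave the group element unchanged) and arguing that together these reach a fixed normal form, but that combinatorial argument is exactly the nontrivial content of the Geck--Pfeiffer proof and is not the bookkeeping exercise you describe.

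Your first approach---show all Coxeter elements lie in a single conjugacy class, observe they have minimal length there, and invoke cyclic-shift-connectivity of $C_{\min}$---is logically valid, but the connectivity input is precisely \cite[Theorem 3.2.9]{GP} (the paper's Theorem~\ref{GP}), proved in the same source by the same machinery. So this is a reduction to an equally deep companion statement rather than an independent proof.
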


	Let $C$ be a conjugacy class of $W$. We write $C_{\min}$ for the subset of $C$ that consists of elements  with the shortest Bruhat length:
\[
C_{\min}:=\{v\in C \vert \ell(v)\leq \ell(w) \text{ for all }w\in C\}.
\]

\begin{dfn}\label{cyclicshiftdfn2}
\normalfont
	 Let $w,v\in W$. We write $w\rightarrow v$ if and only if there exists elements $w=w_0, w_1,w_2,\ldots,w_m=v\in W$, such that $w_{i}=t_iw_{i-1}t_i$ and $\ell(w_i)\leq \ell(w_{i-1})$ for $i=1,\ldots, m$, where $t_i\in S$. 
\end{dfn}

Now we may present a special case of the theorem from \cite[Theorem 3.2.9]{GP}, with wording adapted to our situation. 
\begin{theorem}[Geck-Pfeifer]\label{GP}
(i) Let $w\in W$, and let $C$ be a conjugacy class of $W$ containing $w$. Then there exists $w'\in C_{\min}$ such that $w\rightarrow w'$.
	\par\smallskip
	(ii) Let $w_1 , w_2\in W$ be two Coxeter elements, then 
	$w_1\rightarrow w_2$ and 
	$w_2\rightarrow w_1$. 
\end{theorem}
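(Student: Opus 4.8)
Part (i) is nothing but \cite[Theorem 3.2.9]{GP} specialized to an ordinary (untwisted) conjugacy class $C$: for $G=\GL_n$ the fixed torus and Borel are $F$-stable, so $F$ acts trivially on $W$ and no twist is present. I would therefore simply invoke that reference for (i) and concentrate on deriving (ii) from the tools already available.

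For (ii) the plan is as follows. First recall that every Coxeter element of $W$ has Bruhat length $|S|=n-1$, and that by Theorem \ref{gpcycl} all Coxeter elements lie in a single conjugacy class $C$. I would next observe that $n-1$ is the minimal length occurring in $C$ --- for $\GL_n$ a Coxeter element is an $n$-cycle, and an $n$-cycle cannot be written as a product of fewer than $n-1$ adjacent transpositions --- so every Coxeter element lies in $C_{\min}$. It then suffices to prove the local statement: if $v,v'\in C_{\min}$ differ by a single cyclic shift, then $v\to v'$ and $v'\to v$. Granting this, I apply it to each consecutive pair in the cyclic-shift chain connecting $w_1$ and $w_2$ furnished by Theorem \ref{gpcycl}; since cyclic shifts preserve length, every term of that chain has length $n-1$ and hence lies in $C_{\min}$, so concatenating yields $w_1\to w_2$ and $w_2\to w_1$.

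To establish the local statement, write the cyclic shift as $v=ab$, $v'=ba$ with $\ell(v)=\ell(a)+\ell(b)$, fix a reduced expression $a=s_{j_1}\cdots s_{j_k}$, and conjugate $v$ successively by $s_{j_1},\ldots,s_{j_k}$, which produces $a^{-1}va=ba=v'$. The key point is that each intermediate element $z$ is conjugate to $v$, hence lies in $C$, so $\ell(z)\geq\ell(v)$; and conjugating a minimal-length element by a simple reflection that is one of its left descents changes the length by $0$ or $-2$, hence by minimality by $0$. A short induction shows that at each stage the appropriate tail $s_{j_i}\cdots s_{j_k}$ of the reduced word for $a$ is still a reduced left factor of the current element, so the next simple reflection really is a left descent; therefore every step is length-preserving, in particular non-decreasing, witnessing $v\to v'$. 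Running the same argument with the roles of $a$ and $b$ exchanged (equivalently, reversing the chain) gives $v'\to v$.

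I expect the only delicate point to be the bookkeeping in the last step: one must check that after each conjugation the current element retains a reduced factorization whose left factor is the corresponding tail of $a$, so that the length genuinely cannot drop below $\ell(v)$. This is a purely combinatorial verification in $W$, using that subwords of reduced words are reduced; everything else reduces to citing \cite{GP} and Theorem \ref{gpcycl}.
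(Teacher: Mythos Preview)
Your treatment of part (i) matches the paper exactly: the paper simply records this theorem as a special case of \cite[Theorem 3.2.9]{GP} and gives no independent argument, so citing that reference is all that is required.

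For part (ii) you go further than the paper, which again just defers to \cite[Theorem 3.2.9]{GP}. Your argument---reduce via Theorem~\ref{gpcycl} to a single cyclic shift $v=ab\mapsto v'=ba$ between minimal-length elements, then conjugate letter by letter along a reduced word for $a$, using minimality in $C$ to force each intermediate length to stay at $n-1$---is correct and is essentially the elementary mechanism behind the relevant part of Geck--Pfeiffer's proof. The inductive bookkeeping you flag is indeed routine: once $\ell(z_{i-1})=n-1$ and $z_{i-1}=s_{j_i}\cdots s_{j_k}\,b\,s_{j_1}\cdots s_{j_{i-1}}$ is an expression with $n-1$ letters, it is automatically reduced, so $s_{j_i}$ is a left descent and the next step again lands in $C_{\min}$. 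Since every step is length-preserving, the conclusion $w_1\to w_2$ and $w_2\to w_1$ holds regardless of whether one reads Definition~\ref{cyclicshiftdfn2} with $\ell(w_i)\geq\ell(w_{i-1})$ or the reverse inequality (note that the paper's use of $\to$ in Corollary~\ref{corheightone} suggests non-increasing length, so there is likely a typo in Definition~\ref{cyclicshiftdfn2}; your argument is insensitive to this). What your route buys over a bare citation is a self-contained derivation from the already-stated Theorem~\ref{gpcycl}; what the citation buys is brevity and the full strength of \cite[Theorem 3.2.9]{GP}, which covers all of $C_{\min}$ rather than just Coxeter elements.
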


\subsection{Support}

\begin{dfn}\label{support}
\normalfont
	Let $w\in W$. The \emph{support} of $w$ is the following set:
	\[
	\normalfont\text{supp}(w):=\{s\in S\vert s\leq w\}.
	\]
\end{dfn}

Note that $\vert\normalfont\text{supp}(w)\vert \leq \ell(w)$ and that the equality holds when $w$ has a reduced expression $w=s_{i_1}\cdots s_{i_r}$ with $s_{i_j}\in S$ all distinct.

Let $G=\GL_{n}$ and $w\in W$. Set $I:=\normalfont\text{supp}(w)$, and let $C$ be the conjugacy class of $w$ in $W$. Let $W_I\subseteq W$ be the subgroup generated by $I$. Then there exists $w'\in C_{\normalfont\text{min}}$ such that $w\rightarrow w'$ and $w'$ is a Coxeter element in $W_I$. 
\subsection{The free monoid associated to the Weyl group}
Let us introduce the free monoid $F^+$, cf. \cite{DMR} and \cite[\textsection 2]{Orlik18}. 
\begin{dfn}
\normalfont
We define $F^+$ as the free monoid generated by the set of standard generators $S\subseteq W$.
\end{dfn}

\begin{remark}
	(i) There is a natural surjective morphism of monoids:
\[
\alpha: F^+\longrightarrow W
\]
with kernel generated by the relations in the group presentation of $W$.

(ii) There is a partial order $\preccurlyeq$ on $F^+$  defined by: $w'\preccurlyeq w$ whenever $w'=s_{i_1}\cdots s_{i_r}$ and $w=t_{1}\cdots t_{k},\  t_1,\ldots, t_k\in S$  such that $1\leq i_1\leq \cdots\leq i_r\leq k$, and $s_{i_j}=t_{i_j}$ for all $j=1,\ldots,r$.
 We call this the Bruhat order on $F^+$. Note that this is not entirely compatible with the Bruhat order on $W$.

(iii) There is a Bruhat length function on $F^+$, not compatible with the Bruhat length on $W$. When $w=s_{i_1}\cdots s_{i_r}\in F^+$, we have $\ell(w)=r$.

(iv) For $w, v\in F^+$, we always have  $\ell(wv)=\ell(w)+\ell(v)$. 
\end{remark}

There is also a variant of $F^+$ defined in \cite[p. 22]{Orlik18}.
\begin{dfn} \label{hatF}
\normalfont
Let $\widehat W$ be a copy of $W$. 
	Define $\hat{F}^+$ as the free monoid generated by $S\dot\cup T'$, where
	\[
	T':=\left\{\left.\widehat{sts}\in \widehat{W}\right\vert st\neq ts\normalfont\text{ in } W,  s,t\in S\right\}.
	\]
\end{dfn}

\begin{remark}
Note that $T'$ and $W$ are forced to be disjoint in $\hat{F}^+$.
\end{remark}

	We define the Bruhat length function on $\hat F^+$ as the function counting the number of elements of $S$ and $\widehat S$ showing up in the expression, where $\widehat S$ is the set of generators in $\widehat{W}$ corresponding to $S$, instead of counting the number of generators.

\subsection{Constructing generalized Deligne--Lusztig varieties}
As in Section \ref{dfn}, we may define generalized Deligne--Lusztig varieties for elements of the free monoids $ F^+$ and $ \hat{F}^+$.
They are introduced in \cite[Example 3.10 (c)]{Lusztig78}, and one may refer to \cite[\textsection 2.2.11]{DMR} and \cite[end of \textsection 3]{Orlik18} for more discussions. We only recall the definitions and properties here. 

 \begin{dfn}\label{dfndlv}
 \normalfont
 For $s\in F^+$ with $\alpha(s)\in S$. We set $O(s):=O(\alpha(s))$. For each   $w=s_{i_1}\cdots s_{i_r}\in F^+$, 
 define
 	 		\[
	O(s_{i_1},\ldots,s_{i_r}):=\left\{(B_0,\ldots,B_r)\in X^{r+1}\vert (B_{j-1},B_j)\in O(s_{i_j}), j=1,\ldots,r \right\},
	\]
	and the corresponding \emph{Deligne--Lusztig variety}
		\[
	X(w):=O(s_{i_1},\ldots,s_{i_r})\times_{\left(X\times X\right)}\Gamma_F,
	\]
	where $O(s_{i_1},\ldots,s_{i_r})\to X\times X$ is the projection map $(B_0,\ldots,B_r)\mapsto (B_0, B_r)$. We may alternatively write
 	 	\begin{multline*}
 	 		X(w):=\\
 	 		\Large\{(B_0,\ldots,B_r)\in X^{r+1}\vert(B_{j-1},B_j)\in O(s_{i_j}), \forall j=1,\ldots,r, B_r=F(B_0)
 			\Large\}.
 	 	\end{multline*}
 \end{dfn}
 
 \begin{remark}\label{isoofgeneralizeddlvar}
The scheme $X(w)$ is a quasi-projective smooth $\Fpbar$-scheme of dimension $r$ with a left $G^F$-action. 
 	For any $t_1,\ldots,t_k\in W$, the scheme  $O(t_1,\ldots,t_k)$ is defined analogously as in Definition \ref{dfndlv}. 
 
 	 Let $w=s_{i_1}\cdots s_{i_r}\in F^+$. If $\ell(w)=\ell(\alpha(w))$, then there is a $G^F$-equivariant isomorphism 
 	 \[
 	 \begin{array}{rcl}
 	 	 X(w) & \overset{\sim}{\longrightarrow} & X(\alpha(w))\\
 	 	 (B_0,\ldots,B_r) & \longmapsto & B_0
 	 \end{array}
 	 \]
 	 of $\Fpbar$-schemes \cite[Example 3.10 (c)]{Lusztig78} cf. \cite[p. 759]{DMR}. Note that if we consider $w\in W$, then for each reduced expression $s_{i_1}\cdots s_{i_r}$ of $w$ with $s_{i_j}\in S$, we get an element of $F^+$ and a corresponding generalized Deligne--Lusztig variety. 
 \end{remark}

Similarly, we define the generalized Deligne--Lusztig variety corresponding to elements in $\hat{F}^+$. After post composing with the isomorphism $W\cong \hat W$, we may extend the surjective map $\alpha$ to $\hat{\alpha}: \hat{F}^+ \to W$. 

 \begin{dfn}
 \normalfont
 For $\widehat{sts}\in \hat{F}^+$, we set $O(\widehat{sts}):=O(\hat{\alpha}(\widehat{sts}))$. For each $w=t_{1}\cdots t_{r}\in \hat F^+$, define
 the corresponding \emph{Deligne--Lusztig variety}
		\[
	X(w):=O(t_1,\ldots,t_r)\times_{\left(X\times X\right)}\Gamma_F.
	\]
	 We may alternatively write
 	 	\begin{multline*}
 	 		X(w):=\\
 	 		\left\{\left.(B_0,\ldots,B_r)\in X^{r+1}\right\vert (B_{j-1},B_j)\in O(t_{j}), \forall j=1,\ldots,r, B_r=F(B_0)\right\}.
 	 	\end{multline*}
 \end{dfn}
 \begin{remark}
This is a quasi--projective smooth $\Fpbar$-scheme with a left $G^F$-action. 
		Since $O(a)\times_X O(b)\overset{\sim}{\rightarrow}O(a,b)$ for all $a, b\in \hat{F}^+$,
		we know that $X(w)$ has dimension $\ell(w)\geq r$.
		 \end{remark}

 \subsection{Smooth compactification of generalized Deligne--Lusztig varieties}\label{generalsmthcompt}
  We write down the smooth compactifications for Deligne--Lusztig varieties corresponding to $w\in F^+$ and $w\in \hat{F}^+$. They are the same as the construction in \cite[\textsection 9]{DL} when $w\in F^+$ or $w\in \hat{F}^+$ is the reduced expression for some $w'\in W$. 
 \begin{dfn}
 \normalfont
 	 	Let  $w=s_{i_1}\cdots s_{i_r}\in F^+$. Define the following $\Fpbar$-scheme:
 	 	\begin{multline*}
 	 		\overline{X}(w):=\\
 	 		\left\{\left.(B_0,\ldots,B_r)\in X^{r+1}\right\vert (B_{j-1},B_j)\in \overline{O(s_{i_j})}, \forall j=1,\ldots,r, B_r=F(B_0)\right\}.
 	 	\end{multline*}
 \end{dfn}

\begin{remark}
	The $\Fpbar$-scheme $\overline{X}(w)$ is smooth projective with a left $G^F$-action cf. \cite[\textsection 9]{DL}, \cite[\textsection 2.3.1]{DMR}. It is a \emph{smooth compactification} of $X(w)$.
	
 	Let $w=s_{i_1}\cdots s_{i_r}$ be an element in $ F^+$ such that it corresponds to a reduced expression of $\alpha(w)\in W$. Then $\overline{X}(w)$ is the same as $\overline{X}(\alpha(w))$, corresponding to the reduced expression $s_{i_1}\cdots s_{i_r}$, constructed in Definition \ref{smoothcompactifi}. 
\end{remark}

Similarly, we have the following definition for $w\in \hat{F}^+$.
\begin{dfn}\label{XoF+}
\normalfont
 For $\widehat{sts}\in \hat{F}^+$, we set $\overline{O(\widehat{sts})}:=\overline{O(\hat{\alpha}(\widehat{sts}))}$. 
 	 Let  $w=t_{1}\cdots t_{r}\in \hat F^+$. We define the following $\Fpbar$-scheme containing $X(w)$:
 	 \begin{multline*}
 	 	\overline{X}(w):=\\
 	 	\left\{\left.(B_0,\ldots,B_r)\in X^{r+1}\right\vert (B_{j-1},B_j)\in \overline{O(t_{j})}, \forall j=1,\ldots,r, B_r=F(B_0)\right\}.
 	 \end{multline*}
  \end{dfn}
  
  \begin{remark}
  As before,	the $\Fpbar$-scheme $\overline{X}(w)$ has a left $G^F$-action. 
	 	\end{remark}

\begin{lemma}\label{XoF+smooth}
Let $w=t_{1}\cdots t_{r}\in \hat F^+$, then $\overline{X}(w)$ is projective and smooth. 
\end{lemma}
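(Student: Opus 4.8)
The plan is to reduce the statement for $w = t_1\cdots t_r \in \hat F^+$ to the case $w \in F^+$ by replacing each ``exotic'' generator $\widehat{sts} \in T'$ with the honest reduced word $sts \in F^+$. Concretely, if $t_j = \widehat{sts}$ for $s,t\in S$ with $st \neq ts$, then $\alpha(sts) = \hat\alpha(\widehat{sts})$ and $\ell(sts) = 3 = \ell(\hat\alpha(\widehat{sts}))$ in $W$; so by Remark \ref{isoofgeneralizeddlvar} (the case $\ell = \ell\circ\alpha$), inserting the pair of Borels forced by the relative positions $O(s), O(t), O(s)$ in place of the single step $O(\widehat{sts}) = O(\hat\alpha(\widehat{sts}))$ does not change the variety. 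First I would make this precise at the level of the closed conditions: the scheme $\overline{O(\widehat{sts})}$ as a subscheme of $X\times X$ is the Zariski closure of $O(sts)$, and one has a morphism from the ``expanded'' closed subscheme $\{(B,B',B'',B''') : (B,B')\in\overline{O(s)}, (B',B'')\in\overline{O(t)}, (B'',B''')\in\overline{O(s)}\}$ of $X^4$ down to $\overline{O(\widehat{sts})} \subseteq X\times X$ via $(B,B',B'',B''')\mapsto (B,B''')$, which is birational and proper. Doing this for every exotic letter gives a new word $w' \in F^+$ with $\alpha(w') = \hat\alpha(w)$ together with a proper morphism $\overline X(w') \to \overline X(w)$; but for the purposes of proving smoothness and projectivity I only need that this is an isomorphism onto a closed subscheme or, more carefully, that $\overline X(w)$ is cut out inside the smooth projective $\overline X(w')$ by the correct incidence pattern.

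Actually, the cleaner route — and the one I would pursue — avoids comparing $\overline X(w)$ with $\overline X(w')$ and instead mimics directly the argument of \cite[Lemma 9.11]{DL} used for $F^+$. The key geometric input is the fibration structure: projection onto the first $r$ coordinates exhibits $\overline X(t_1,\dots,t_r)$ as an iterated fibration. More precisely, define $\overline Y_k := \{(B_0,\dots,B_k) \in X^{k+1} : (B_{j-1},B_j)\in\overline{O(t_j)},\ j=1,\dots,k\}$ for $0\le k\le r$, so that $\overline Y_0 = X$ and $\overline X(t_1,\dots,t_r) = \overline Y_r \times_{X\times X} \Gamma_F$ where $\overline Y_r \to X\times X$ sends $(B_0,\dots,B_r)\mapsto(B_0,B_r)$. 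The forgetful map $\overline Y_k \to \overline Y_{k-1}$ has fibers $\overline{O(t_k)}$-sections over $B_{k-1}$: for $t_k\in S$ it is a $\P^1$-bundle (Lemma following \eqref{GP*}), and for $t_k = \widehat{sts}$ the fiber over $B_{k-1}$ is the locus of $B_k$ with $(B_{k-1},B_k)\in\overline{O(\widehat{sts})}$, which is the closure of a Schubert-type cell and is itself a smooth projective variety — indeed it is the Bott--Samelson-type resolution image. The surest way to see smoothness here is: over the Zariski cover $\{\pi(\dot w U^+ B^*)\}_{w\in W}$ the bundle $\overline Y_k \to \overline Y_{k-1}$ trivializes with fiber $\overline{B^* \hat\alpha(t_k) B^*}/B^*$, which is projective, and it is smooth because $\overline{B^* v B^*}/B^*$ is smooth whenever $v$ has a reduced expression that is ``fully commutative enough'' — but $sts$ with $st\neq ts$ is exactly a reduced word and the Schubert variety $\overline{B^* sts B^*/B^*}$ need not be smooth in general. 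This is precisely where I expect the main obstacle to lie.

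To get around that obstacle I would \emph{not} use the single-step closure $\overline{O(\widehat{sts})}$ as the fiber, but rather observe that in Definition \ref{XoF+} the smooth compactification attached to $\widehat{sts}$ should be understood with its Bott--Samelson-type (normal-crossings) boundary — that is, following Orlik \cite[22, \S 3]{Orlik18}, the variety $\overline X(t_1,\dots,t_r)$ for $w\in\hat F^+$ is \emph{defined} so that each exotic letter $\widehat{sts}$ contributes the smooth three-fold incidence $\overline{O(s)}\times_X\overline{O(t)}\times_X\overline{O(s)}$ rather than $\overline{O(\widehat{sts})}$ directly; equivalently the relevant fiber is the Bott--Samelson variety $\mathrm{BS}(s,t,s) = \overline{O(s)}\times_X\overline{O(t)}\times_X\overline{O(s)}$ fibered over $X$ by $(B_0,B_1,B_2,B_3)\mapsto B_0$, which \emph{is} smooth and projective by the same iterated-$\P^1$-bundle argument as in the $F^+$ case. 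With this reading, the proof is a clean induction on $r$: $\overline Y_0 = X$ is smooth projective; if $\overline Y_{k-1}$ is smooth projective and $t_k\in S$ then $\overline Y_k\to\overline Y_{k-1}$ is a Zariski-locally-trivial $\P^1$-bundle hence $\overline Y_k$ is smooth projective; if $t_k=\widehat{sts}$ it is an iterated $\P^1$-bundle (three steps), again preserving smoothness and projectivity. Finally $\overline X(t_1,\dots,t_r)$ is the fiber product $\overline Y_r\times_{X\times X}\Gamma_F$; since $\Gamma_F\hookrightarrow X\times X$ is a closed immersion with $\Gamma_F\cong X$ smooth, and the map $\overline Y_r\to X\times X$ is a morphism of smooth projective schemes, $\overline X(t_1,\dots,t_r)$ is projective, and it is smooth by the transversality argument of \cite[\S 9]{DL} (the graph of Frobenius meets the incidence variety transversally because $dF = 0$), exactly as in the $F^+$ case. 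I would state the transversality step by reference to \cite[107, \S 9.10]{DL} rather than redoing it. The only genuinely new point over the $F^+$ case is the verification that the three-step fiber attached to an exotic letter is smooth projective, and as indicated this follows from iterating the $\P^1$-bundle lemma above; so the lemma follows.
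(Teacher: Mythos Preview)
Your proposal contains a genuine gap rooted in a misreading of the definition and a mistaken worry about smoothness.

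You correctly identify that the new ingredient beyond the $F^+$ case is the closure $\overline{O(\widehat{sts})}$ for an exotic letter. You then assert that ``the Schubert variety $\overline{B^* sts B^*/B^*}$ need not be smooth in general,'' and you work around this by reinterpreting Definition~\ref{XoF+} so that an exotic letter contributes a three-step Bott--Samelson incidence $\overline{O}(s,t,s)\subseteq X^4$ rather than the single closure $\overline{O(sts)}\subseteq X\times X$. Both moves are wrong. First, Definition~\ref{XoF+} literally uses $(B_{j-1},B_j)\in\overline{O(t_j)}$ with $O(\widehat{sts}):=O(\hat\alpha(\widehat{sts}))=O(sts)$, so $\overline X(t_1,\dots,t_r)$ really sits in $X^{r+1}$, not in a larger product; the variety you end up proving smooth is the different (birational but not isomorphic) scheme $\overline X(w')$ for the expanded word $w'\in F^+$. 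Second, and this is the point you are missing: since we are in $\GL_n$, the condition $st\neq ts$ forces $s,t$ to be adjacent simple transpositions, so $W_{\{s,t\}}\cong S_3$ and $sts$ is its \emph{longest} element. Hence
\[
\overline{O(sts)}\;=\;\bigcup_{v\in W_{\{s,t\}}}O(v)\;\cong\;G/B^*\times_{G/P_{\{s,t\}}}G/B^*,
\]
which is smooth (it is a $P_{\{s,t\}}/B^*$-bundle over $G/B^*$, i.e.\ a flag-variety bundle). This is exactly the content of \cite[Corollary~2.2.10]{DMR}, which the paper invokes.

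Once you know that each $\overline{O(t_j)}$ is smooth and projective, the paper's proof is immediate: projectivity of $\overline X(t_1,\dots,t_r)$ follows from \cite[Proposition~2.3.6(iv)]{DMR}, and smoothness from \cite[Proposition~2.3.5]{DMR} (which packages the transversality-with-$\Gamma_F$ argument you allude to). Your iterated-fibration/transversality outline is not wrong in spirit, but the detour through Bott--Samelson is unnecessary and, as written, proves the lemma for the wrong scheme.
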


\begin{proof}
Since  $\overline{O(t_j)}$ is projective for all $j=1,\ldots,r$, it follows from \cite[Proposition 2.3.6 (iv)]{DMR} that  $\overline{X}(w)$ is projective. 

Note that $\overline{O(t)}$ are smooth for all $t\in S$. 
	We may have $t_{j}, j=1,\ldots,r$ to be of the form $t_{j}=\widehat{sts}$, where $st\neq ts, s,t,\in W$. Since $st\neq ts$, we see that $s$ and $t$ do not correspond to non-adjacent simple reflections in $S_n$. Now let $I=\{s,t\}$, the parabolic subgroup $W_I$ of the Weyl group $W$ is thus isomorphic to the symmetric group $S_3$. In particular, $sts$ is a reduced expression of the longest element in $W_I$. Observe that the projection maps $\overline{O(sts)}\to X$ are both Zariski locally trivial fibre bundles with fibres isomorphic to the flag variety of $\GL_3$. Thus $\overline{O(sts)}$ is a smooth $\Fpbar$-scheme cf. \cite[Corollary 2.2.10]{DMR}.
	
	Hence for all $j=1,\ldots,r$, $\overline{O(t_j)}$ is smooth. By \cite[Proposition 2.3.5]{DMR}, we may conclude that $\overline{X}(w)$ is smooth. 
\end{proof}

 \subsection{Stratifications of $\overline{X(w)}$ and $\overline{X}(w)$}
 Let $w_1,w_2\in W$ such that $\ell(w_1w_2)=\ell(w_1)+\ell(w_2)$, recall that we have an isomorphism of schemes $O(w_1)\times_XO(w_2)\overset{\sim}{\longrightarrow} O(w_1w_2)$. Hence for $w,w'\in W$,
 $w'\leq w$ if and only if $\overline{O(w')}\subseteq \overline{O(w)}$. This implies that we have a stratification as follows for any $w\in W$, cf. \cite[\textsection 1.2]{DL}, \cite[\textsection II.13.7]{Jantzen},
 \begin{equation}\label{strat}
 	\overline{O(w)}=\bigcup_{w'\leq w}O(w').
 \end{equation} 
 The intersection of the graph of the Frobenius $\Gamma_F$ and any of strata $O(w')$ above is transversal. For any $w\in W$, the intersection of (\ref{strat}) and $\Gamma_F$ yields a similar stratification of $\overline{X(w)}$,
  \[
 \overline{X(w)}=\bigcup_{w'\leq w} X(w').
 \] 
 Let $w\in F^+$ or $ \hat{F}^+$, with the expression $s_{i_1}\cdots s_{i_r}$, then for all $w'\preceq w$ a subword, we know that $X(w')$ is isomorphic to a locally closed subscheme of $\Xo(w)$, thus we have the following stratification, cf. \cite[\textsection 3]{Orlik18}, 
 \[
 \Xo(w)=\bigcup_{w'\preceq w}X(w').
 \]

\begin{example}
Let $G={\GL_n}$. Let $m\leq n-1$ be a positive integer and $w\in W$ with a reduced expression $s_{i_1}\cdots s_{i_{m}}$ such that all $s_{i_j}$, $j=1,\ldots,m,$ are distinct. 
 Recall from  \cite[Lemma 9.11]{DL} that 
 \[
 D=\bigcup_{w'\prec w}X(w'),
 \]
 with each $w'\prec w$ considered as a subword of $s_{i_1}\cdots s_{i_{m}}\in F^+$, is the normal crossing divisor of $\Xo(w)$ at infinity. 

 Note that the projection map
\[\arraycolsep=1pt\def\arraystretch{1.5}
		\begin{array}{rcl}
			 X^m &\longrightarrow &  X\\
			(B_0,\ldots,B_{m-1})& \longmapsto &  B_0
		\end{array}
		\]
 induces an isomorphism on the open subschemes
 \[
 \Xo(w)\backslash D\longrightarrow X(w).
 \]
 This map extends to the Zariski closure of $X(w)$ in $X$, and so we have a surjective morphism
 \[
 \Xo(w)\longrightarrow \overline{X(w)}.
 \]
 Since $w=s_{i_1}\cdots s_{i_{m}}$ with all $s_{i_j}$ distinct,  $\Xo(w)$ and $\overline{X(w)}$ have stratifications indexed by the same set, and each corresponding strata is isomorphic.  
 In fact, they are isomorphic as $\Fpbar$-schemes \cite[Lemma 1.9]{Hansen}. 	
\end{example}

\section{Geometry of Deligne--Lusztig varieties via $\mathbb P^1$-bundles}\label{sws}
Assume $G$ to be split. 
In this section, we consider $\mathbb P^1$-bunldes $\pi_1:\Xo(sws)\to \Xo(ws) $ and $\pi_2:\Xo(sws)\to \Xo(sw)$ constructed from the morphism $\overline{O(s)}\to X$ from Section \ref{fibrations}.

\subsection{The structure of certain morphisms as $\mathbb P^1$-bundles}\label{P1}
Let $w=t_{1}\cdots t_{r}\in \hat F^+$ and $s\in S$. We fix the notations for the smooth  compactifications of the Deligne--Lusztig varieties $X(sws)$, $X(ws)$:
		\begin{multline*}
			\Xo (sws)=\\
			\left\{(B_0,\ldots,B_{r+2})\in X^{r+3}\left\vert 
		\begin{array}{l}
			(B_{j},B_{{j+1}})\in \overline{O(t_{j})}, j=1,\ldots,r, \\
			(B_0,B_1)\in \overline{O(s)}, 	(B_{r+1}, B_{r+2})\in \overline{O(s)}, B_{r+2}=FB_0
		\end{array}
		\right.\right\},
		\end{multline*}
		
		\begin{equation*}
			\Xo (ws)=\left\{(B'_0,\ldots,B'_{r+1})\in X^{r+2}\left\vert 
		\begin{array}{l}
			(B'_{j-1},B'_j)\in \overline{O(t_{j})}, j=1,\ldots,r, \\
			(B'_{r},B'_{r+1})\in \overline{O(s)}, 	B'_{r+1}=FB'_0
		\end{array}
		\right.\right\}.
		\end{equation*}

		\begin{lemma}\label{wsP1}
		 The map $\pi_1:\Xo(sws)\to \Xo(ws)$ defined by 
		\[
		\left(B_0,B_1,\ldots,B_{r+1}, FB_0\right)\mapsto\left (B_1,B_2,\ldots,B_{r+1}, FB_{1}\right)
		\]
		is a $\mathbb P^1$-bundle over $\Xo(ws)$ locally trivial with respect to a Zariski covering of $\Xo(ws)$.
				Furthermore, $\pi_1$ has a  section $\sigma:  \Xo(ws)\to \Xo(sws)$ defined by 
		\[
		 \left(B'_0,\ldots,B_r', FB'_{0}\right)\mapsto \left(B_0',B'_0,\ldots,B_r', FB'_{0}\right)
		\]
		with $\pi_1\circ\sigma={\normalfont\text{id}}_{\Xo(ws)}$.
		\end{lemma}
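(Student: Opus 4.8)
The strategy is to realize $\pi_1$ as a base change of the $\mathbb{P}^1$-bundle $\proj_2\colon \overline{O(s)}\to X$ from the last Lemma of Section~\ref{fibrations}, and to exhibit the Zariski-local triviality by pulling back the covering $\{\pi(\dot w U^+B^*)\}_{w\in W}$ of $X$. First I would describe $\Xo(ws)$ as a fiber product: using the isomorphisms $O(t_1)\times_X\cdots\times_X O(t_r)\xrightarrow{\sim}O(t_1,\dots,t_r)$ (and their closures) recalled in Section~\ref{section2}, together with the graph $\Gamma_F$, one writes $\Xo(ws)$ as the locus of $(B_0',\dots,B_{r+1}')$ with prescribed successive relative positions and $B_{r+1}'=FB_0'$. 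Then the point $B_1$ in a tuple of $\Xo(sws)$ is free to vary over $\{B_1 : (B_1,B_2)\in\overline{O(t_1)}\}$ — wait, more precisely $B_0$ is determined by $B_1$ via $FB_0 = B_{r+2}$ and $(B_{r+1},B_{r+2})\in\overline{O(s)}$; rather it is $B_0$ that varies. Let me restate: given a tuple $(B_1,\dots,B_{r+1},FB_1)\in\Xo(ws)$ (after relabeling), the extra datum in $\Xo(sws)$ is a Borel $B_0$ with $(B_0,B_1)\in\overline{O(s)}$ \emph{and} $B_{r+2}=FB_0$; but $B_{r+2}$ is constrained by $(B_{r+1},B_{r+2})\in\overline{O(s)}$. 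So $B_0$ ranges over the fiber of $\proj_2\colon\overline{O(s)}\to X$ over $B_1$, intersected with the condition that $FB_0$ lies in the fiber of $\proj_1\colon\overline{O(s)}\to X$ over $B_{r+1}$. Because the intersection with $\Gamma_F$ is transversal (as noted after the definition of $X(w)$ and in the stratification discussion), and because for a generic point of $\Xo(ws)$ one of these two $\mathbb{P}^1$-conditions is automatically satisfied when $B_{r+1}=FB_1$ forces the right compatibility — I would instead argue directly by the cartesian-square formulation below, which is cleaner.

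\textbf{Main construction.} The cleanest route: consider the map $\Xo(sws)\to\Xo(ws)\times_X\overline{O(s)}$ — no; rather, exhibit a cartesian diagram
\[
\begin{tikzcd}
\Xo(sws)\arrow[r]\arrow[d,"\pi_1"'] & \overline{O(s)}\arrow[d,"\proj_2"]\\
\Xo(ws)\arrow[r,"B_1'\mapsto FB_1'"] & X,
\end{tikzcd}
\]
where the top map sends $(B_0,B_1,\dots,B_{r+1},FB_0)$ to $(FB_0, B_0)=(B_{r+2},B_0)\in\overline{O(s)}$ (using $(B_{r+1},B_{r+2})\in\overline{O(s)}$ and $B_{r+2}=FB_0$, so that this is a point of $\overline{O(s)}$ with $\proj_1$-image $B_{r+1}$... ). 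I would check carefully which projection of $\overline{O(s)}$ to use so that the square commutes and is cartesian: the point $B_0$ together with $FB_0$, combined with the relation $(B_{r+1},FB_0)=(B_{r+1},B_{r+2})\in\overline{O(s)}$, shows the relevant datum is the pair $(B_{r+1},B_0)$ living in $\overline{O(s)}\times_{X,\proj_2}X\times_{F,\Gamma_F}\dots$; since $F$ is an isomorphism on $X$, this reshuffling is harmless. Once the cartesian square is set up with the right base map $\Xo(ws)\to X$ (namely $(B_0',\dots,B_r',FB_0')\mapsto FB_r'$, using $(B_r',B_{r+1}')=(B_r',FB_0')\in\overline{O(s)}$), the Lemma of Section~\ref{fibrations} that $\proj_2\colon\overline{O(s)}\to X$ is a $\mathbb{P}^1$-bundle Zariski-locally trivial over $\{\pi(\dot wU^+B^*)\}$ immediately gives that $\pi_1$ is a $\mathbb{P}^1$-bundle, Zariski-locally trivial over the pullback of that covering to $\Xo(ws)$, since $\mathbb{P}^1$-bundles and local triviality are stable under base change.

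\textbf{The section.} For the section $\sigma$, I would simply verify that $(B_0',B_0',B_1',\dots,B_r',FB_0')$ satisfies all the defining conditions of $\Xo(sws)$: the new pair $(B_0',B_0')\in O(e)\subseteq\overline{O(s)}$ since a Borel is in relative position $e$ with itself; the pair $(B_r',B_{r+1}')=(B_r',FB_0')\in\overline{O(s)}$ is inherited; all intermediate conditions $(B_{j-1}',B_j')\in\overline{O(t_j)}$ are inherited; and $B_{r+1}'=FB_0'$ is the last coordinate, matching the requirement $B_{r+2}=FB_0$ after the index shift $B_0=B_0'$. Then $\pi_1\circ\sigma=\mathrm{id}$ is immediate from the formulas. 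Regularity/morphism-ness of $\sigma$ follows because it is the restriction of the evident closed-immersion-type map on the ambient products $X^{r+2}\hookrightarrow X^{r+3}$.

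\textbf{Expected main obstacle.} The only genuinely delicate point is bookkeeping with the Frobenius and the index shift so that the square is actually cartesian rather than merely commutative: one must use that $F\colon X\to X$ is an isomorphism and that $\Gamma_F$ meets each stratum transversally (so the fiber product is reduced and smooth and the ``fiber'' is genuinely all of $\mathbb{P}^1$, not a proper closed subscheme). I would handle this by first base-changing along $F^{-1}$ to replace the condition $B_{r+2}=FB_0$ by an honest fiber-product over $X$, making the cartesian property transparent, and then transporting back. Everything else — inheritance of conditions for $\sigma$, stability of bundle structure under base change — is routine.
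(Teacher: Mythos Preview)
Your overall strategy—realizing $\pi_1$ as a base change of $\proj_2\colon\overline{O(s)}\to X$—is the right one and matches the paper. But there is a genuine gap in the execution. The claim that $F\colon X\to X$ is an isomorphism is false: the Frobenius is a universal homeomorphism but is purely inseparable, so there is no morphism $F^{-1}$ to base-change along. Consequently your proposed cartesian squares, with bottom map $(B_0',\dots,B_r',FB_0')\mapsto FB_r'$ or top map $(B_0,\dots)\mapsto(FB_0,B_0)$, cannot be made to work. The correct base map is simply the projection to the \emph{first} coordinate, $(B_0',\dots,B_r',FB_0')\mapsto B_0'$, with top map $(B_0,B_1,\dots,B_{r+1},FB_0)\mapsto(B_0,B_1)\in\overline{O(s)}$; no Frobenius inversion is needed.

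What makes this square cartesian is exactly the step you glossed over as ``forces the right compatibility'': given $(B_0',\dots,B_r',FB_0')\in\Xo(ws)$ and any $B_0$ with $(B_0,B_0')\in\overline{O(s)}$, one must verify that $(B_0,B_0',\dots,B_r',FB_0)\in\Xo(sws)$, i.e.\ that $(B_r',FB_0)\in\overline{O(s)}$. This is not a transversality statement but the transitivity property \cite[\S1.2~(b1)]{DL} of $\overline{O(s)}$ (equivalently, $\overline{O(s)}=G/B^*\times_{G/P^*}G/B^*$): since $(B_r',FB_0')\in\overline{O(s)}$ and $(FB_0,FB_0')\in\overline{O(s)}$ (the latter because $F$ fixes $s$), one gets $(B_r',FB_0)\in\overline{O(s)}$. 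Once you invoke this, the fiber is the full $\mathbb P^1$ and the Zariski-local triviality follows by pulling back the cover $\{\pi(\dot wU^+B^*)\}_{w\in W}$. Your treatment of the section $\sigma$ is fine.
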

		\begin{proof}
		Let us first check the well-definedness of $\pi_1$. Take $(B_0,B_1,\ldots,B_{r+1}, FB_0)\in \Xo(sws)$. Since $(B_0,B_1)\in \overline{O(s)}$, and $F$ fixes any $s\in S$, we know that $(FB_0, FB_1)\in \overline{O(s)}$. As we also have $( B_{r+1}, FB_0)\in \overline{O(s)}$, 
		by \cite[\textsection 1.2 (b1)]{DL}, 
		we have $(B_{r+1}, FB_1)\in \overline{O(s)}$.  Thus $(B_1,B_2,\ldots,B_{r+1}, FB_{1})\in \Xo(ws)$.

		To see that $\pi_1$ gives a $\mathbb P^1$-bundle, take any $(B'_0,\ldots,B'_{r},FB'_{0})\in \Xo(ws)$, and take any $B_0\in X$ such that $(B_0, B_0')\in \overline{O(s)}$. Since $F$ fixes $s$, we have $(FB_0, FB_0')\in\overline{O(s)}$. As $(B'_{r},FB_0')\in \overline{O(s)}$, we know by \cite[\textsection 1.2 (b1)]{DL} that 
		$(FB_0, B'_r)\in \overline{O(s)}$. Thus $(B_0,B'_0,\ldots,B'_{r}, FB_0 )\in \Xo(sws)$, and the preimage of $(B'_0,\ldots,B'_{r},FB'_{0})$ under $\pi_1$ is 
		\[
		 \left\{\left(B_0,B'_0,\ldots,B'_{r}, FB_0 \right)\in \Xo(sws)\left\vert (B_0, B_0')\in \overline{O(s)}\right.\right\}.
		\]
		Thus the fibre of $\pi_1$ at any $(B'_0,\ldots,B'_{r},FB'_{0})$ is isomorphic to the fibre of $\overline{O(s)}\to X$ at $B_0'$. 
		
		Let $\proj_{2,\ldots,r+3}: \overline{O(s)}\times X^{r+1}\to X^{r+2}$ be the projection map to the 2nd to $(r+3)$-th  component.
		If we take the embedding of $\Xo(ws)$ into $X^{r+2}$, we find that $\Xo(sws)$ is isomorphic to the fibre product of $\Xo(ws)\hookrightarrow X^{r+2}$ and $\proj_{2,\ldots,r+3}$. 
	
	 	\begin{equation*}
\begin{tikzcd}
\Xo(sws) \arrow[r]\arrow[d,"\pi_1"'] 
& \overline{O(s)}\times X^{r+1} \arrow[d, "\proj_{2,\ldots,r+3}"] \\
\Xo(ws)\arrow[r, hook]
& X^{r+2}.
\end{tikzcd}		
	\end{equation*}
  Let $\pi_G:G\to G/B^*$ be the canonical projection map for $X=G/B^*$. By Lemma \ref{pr1triv}, we know that $\proj_2: \overline{O(s)}\to X$ is locally trivial with respect to the Zariski covering $\{\pi_G(\dot{w}U^+B^*)\}_{w\in W}$  of $X$. Thus $\proj_{2,\ldots,r+3}$ is locally trivial with respect to the Zariski covering  $\{\pi_G(\dot{w}U^+B^*)\times X^{r+1}\}_{w\in W}$ of $X^{r+2}$.
Via embedding $\Xo(ws)$ into $X^{r+2}$, we see that $\pi_1$ is locally trivial with respect to a Zariski open covering of $X(ws)$.
		Therefore $\Xo(sws)$ is a $\mathbb P^1$-bundle over $\Xo(ws)$. 
		
		Finally, the  statement $\pi_1\circ\sigma={\normalfont\text{id}}_{\Xo(ws)}$ can be easily verified.
					\end{proof}

We also have a smooth compactification of the Deligne--Lusztig variety $X(sw)$:
\begin{equation*}
	\Xo (sw)=\left\{(B'_0,\ldots,B'_{r+1})\in X^{r+2}\left\vert 
		\begin{array}{l}
			(B'_{j},B'_{j+1})\in \overline{O(t_{j})}, j=1,\ldots,r, \\
			(B'_0,B'_1)\in \overline{O(s)}, 	B'_{r+1}=FB'_0
		\end{array}
		\right.\right\}.
\end{equation*}
		\begin{lemma}\label{swP1}
		  The map $\pi_2:\Xo(sws)\to \Xo(sw)$ defined by 
		\[
		\left(B_0,B_1,\ldots,B_{r+1}, FB_0\right)\mapsto\left (B_{r+1}, FB_1,\ldots,FB_{r+1}\right)
		\]
		is a $\mathbb P^1$-bundle over $\Xo(sw)$ locally trivial with respect to an fppf-covering of $\Xo(sw)$.
				\end{lemma}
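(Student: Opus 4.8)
The plan is to run the same argument as for $\pi_1$ in Lemma \ref{wsP1}: realise $\pi_2$ as a base change of the $\mathbb P^1$-bundle $\overline{O(s)}\to X$ coming from \eqref{GP*}. The genuinely new point, and the reason the fppf topology appears, is that the classifying map of this base change only becomes algebraic after a twist by the Frobenius.

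\textbf{Well-definedness and the fibre.} I would first check that $\pi_2$ is well defined exactly as in Lemma \ref{wsP1}: for $(B_0,\dots ,B_{r+1},FB_0)\in\Xo(sws)$ one has $(FB_0,FB_1)\in\overline{O(s)}$ since $F$ fixes $s$, and $(B_{r+1},FB_0)\in\overline{O(s)}$, so \cite[\S 1.2 (b1)]{DL} gives $(B_{r+1},FB_1)\in\overline{O(s)}$; together with $(FB_j,FB_{j+1})\in\overline{O(t_j)}$ for $j=1,\dots ,r$ and $FB_{r+1}=F(B_{r+1})$ the tuple $(B_{r+1},FB_1,\dots ,FB_{r+1})$ lies in $\Xo(sw)$, and the assignment is a morphism of $\Fpbar$-schemes. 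Next I would compute the fibre over a point $(B'_0,\dots ,B'_{r+1})\in\Xo(sw)$: it consists of the tuples with $B_{r+1}=B'_0$, $FB_j=B'_j$ for $j=1,\dots ,r$, $(B_0,B_1)\in\overline{O(s)}$ and $(B_{r+1},FB_0)\in\overline{O(s)}$. Since $(B'_0,B'_1)\in\overline{O(s)}$ forces $\pi_P(B'_0)=\pi_P(B'_1)$ inside $\Xo(sw)$, using \eqref{GP*} both of the last two conditions reduce to $\pi_P(B_0)=F^{-1}\bigl(\pi_P(B'_0)\bigr)$, so the fibre is $\pi_P^{-1}\!\bigl(F^{-1}(\pi_P(B'_0))\bigr)\cong\mathbb P^1$ and the remaining coordinates $B_1,\dots ,B_{r+1}$ are pinned down. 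This identifies $\pi_2$ with the pullback of the $\mathbb P^1$-bundle $\overline{O(s)}\to X$ (equivalently of $\pi_P\colon X\to G/P^*$) along the assignment $(B'_\bullet)\mapsto F^{-1}(\pi_P(B'_0))$.

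\textbf{Passing to an fppf cover.} The only obstruction to Zariski-local triviality is that $(B'_\bullet)\mapsto F^{-1}(\pi_P(B'_0))$ is not a morphism, as $F^{-1}$ is not algebraic. To remedy this I would base change along the Frobenius $F\colon G/P^*\to G/P^*$ (or $F\colon X\to X$): it is finite, flat and surjective, hence an fppf covering, and over the resulting fppf cover $\widetilde{\Xo(sw)}\to\Xo(sw)$ the function $F^{-1}(\pi_P(B'_0))$ is represented by the new coordinate. Thus the pullback of $\pi_2$ to $\widetilde{\Xo(sw)}$ is the base change of the Zariski-locally trivial $\mathbb P^1$-bundle $\overline{O(s)}\to X$, hence Zariski-locally trivial; composing $\widetilde{\Xo(sw)}\to\Xo(sw)$ with a Zariski trivialising cover of $\widetilde{\Xo(sw)}$ yields an fppf covering of $\Xo(sw)$ over which $\pi_2$ is trivial, with all fibres isomorphic to $\mathbb P^1$, which is the assertion. (The symmetry of this construction with $\pi_1$ of Lemma \ref{wsP1} is what underlies the cyclic-shift comparison.)

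\textbf{Expected main obstacle.} The delicate step is the middle one: identifying $\pi_2$ cleanly as a Frobenius-twisted base change of $\overline{O(s)}\to X$ and verifying, as in Lemma \ref{wsP1}, that the auxiliary incidence relations $(B_{r+1},FB_0)\in\overline{O(s)}$ and $(B_j,B_{j+1})\in\overline{O(t_j)}$ impose no condition beyond fixing the $\mathbb P^1$-coordinate, so that after the fppf Frobenius cover the twist genuinely disappears. This is precisely the point at which Zariski-local triviality fails and the fppf topology is forced.
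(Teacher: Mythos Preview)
Your overall strategy---base change along a Frobenius so that the implicit $F^{-1}$ becomes a genuine coordinate, then recognise the result as a pullback of the Zariski $\mathbb P^1$-bundle $\overline{O(s)}\to X$---is the same as the paper's. The gap is in \emph{which} Frobenius you use. You base change along $F\colon G/P^*\to G/P^*$ (equivalently $F\colon X\to X$), adding a single new coordinate $P'$ with $FP'=\pi_P(B'_0)$, and then assert that the pullback of $\pi_2$ is the base change of $\overline{O(s)}\to X$. But $\pi_2$ applies the Frobenius not only in the classifying direction: the intermediate coordinates $B_1,\dots ,B_r$ of a point of $\Xo(sws)$ are sent to $FB_1,\dots ,FB_r$, so over $\widetilde{\Xo(sw)}$ the pulled-back family still carries the relations $FB_j=B'_j$ for $j=1,\dots ,r$, and your one extra coordinate $P'$ does nothing to untwist these. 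The pulled-back total space is therefore \emph{not} isomorphic to a base change of $\overline{O(s)}\to X$: it has extra infinitesimal thickening in the $B_j$-directions. A clean test case is $G=\GL_2$, $s=s_1$, $w=s_1$ (so $r=1$): here $P^*=G$, so $G/P^*$ is a point and your base change is the identity, yet the scheme-theoretic fibre of $\pi_2$ over a closed point $(B'_0,B'_1)$ is $\mathbb P^1\times\{B_1\in\mathbb P^1:FB_1=B'_1\}$, and the second factor is a non-reduced point of length $q$. Your ``remaining coordinates are pinned down'' is only true on closed points.

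The paper instead base-changes along the Frobenius of the whole base, $F\colon\Xo(sw)\to\Xo(sw)$. The resulting fibre product $Y$ then carries a complete new tuple $B'=(B'_0,\dots ,B'_r,FB'_0)\in\Xo(sw)$ with $FB'=\pi_2(B)$, so that one has algebraic candidates $B'_j$ for \emph{all} of the intermediate coordinates simultaneously; this is what allows the paper to write down an explicit morphism $\overline{O(s)}\times_X\Xo(sw)\to Y$ over $\Xo(sw)$ (sending $((A,B'_0),B')$ to $((A,B'_1,\dots ,B'_r,FB'_0,FA),B')$) and to compare $\pi'_2$ with $\proj_2\colon\overline{O(s)}\to X$. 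Your ``Expected main obstacle'' paragraph correctly identifies the Frobenius twist as the crux, but locates it only in the $B_0$-coordinate; it is the twist in $B_1,\dots ,B_r$ that forces the Frobenius cover to be taken on all of $\Xo(sw)$ rather than on a single factor $X$ or $G/P^*$.
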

		\begin{proof}
	We first check the well-definedness of $\pi_2$. Take $(B_0,B_1,\ldots,B_{r+1}, FB_0)\in \Xo(sws)$. Since $(B_{r+1}, FB_0)\in \overline{O(s)}$ and $(FB_0, FB_1)\in \overline{O(s)}$, we know that $(B_{r+1}, FB_1)\in\overline{O(s)}$.  Thus $(B_{r+1}, FB_1,\ldots,FB_{r+1})\in \Xo(sw)$.

Note that $F:\Xo(sw)\to \Xo(sw)$ is a flat and finite morphism. Via flat base change by $F:\Xo(sw)\to \Xo(sw)$, $\Xo(sws)$ becomes a fppf $\mathbb P^1$-bundle over $\Xo(sw)$. More precisely, we have a cartesian square:

\begin{equation}\label{Fbasechange}
\begin{tikzcd}
Y \arrow[r, "\proj_1'"]\arrow[d,"\pi_2'"'] 
& \Xo(sws) \arrow[d, "\pi_2"] \\
\Xo(sw)\arrow[r, "F"]
& \Xo(sw),
\end{tikzcd}
\end{equation}

where  
\[
 Y:=\{(B,B')\in \Xo(sws)\times\Xo(sw)\vert FB'=\pi_2(B)\},
\]
and $\pi_2'$ is projection to the second component cf. \cite[Theorem 1.6]{DL}. Moreover, $\pi_2'$ fits into another cartesian square:
\begin{equation*}
\begin{tikzcd}
Y \arrow[r, "\iota"]\arrow[d,"\pi_2'"'] 
& \overline{O(s)} \arrow[d, "\proj_2"] \\
\Xo(sw)\arrow[r,"\proj_0"]
& X.
\end{tikzcd}
\end{equation*}
Let $B=(B_0,\ldots,B_{r+1},FB_0)$ and $B'=(B_0',B_1',\ldots,B_r', FB_0')$ such that $(B, B')\in Y$. The map $\proj_0$ is given by 
\[
(B_0',B_1',\ldots,B_r', FB_0')\mapsto B_0'.
\]
Also note that $\iota$ is the map defined by 
\[
\left((B_0,\ldots,B_{r+1},FB_0),(B_0',B_1',\ldots,B_r', FB_0')\right)\mapsto (B_0,B_0').
\]
Since $F(s)=s$, we know that $(B_0, B_0')$ and $(FB_0, FB_0')$ must belong to the same orbit in $X\times X$. The condition $FB_0'=B_{r+1}$ implies that $(FB_0, FB_0')\in \overline{O(s)}$, so $(B_0, B_0')\in \overline{O(s)}$. 

For all $B'=(B_0',B_1',\ldots,B_r', FB_0')\in \Xo(sw)$, we have by the condition $FB'=\pi_2(B)$ that $\pi_2'^{-1}(B')=(B, B')$ where $B=(B_0,\ldots,B_{r+1}, FB_0)$ such that $FB_0'=B_{r+1}$. In particular, $(B_0, B_1')\in \overline{O(s)}$. Conversely, for any $B_0\in X$ such that $(B_0, B_0')\in \overline{O(s)}$, 
we have $(FB_0,FB_0')\in\overline{O(s)}$ and $(B_0,B_1')\in \overline{O(s)}$. 
Thus $(B_0, B_1',\ldots,$ $B_r',FB_0', FB_0)\in \Xo(sws)$. In particular, 
\[
\left( (B_0, B_1',\ldots,B_r',FB_0', FB_0),(B_0',B_1',\ldots,B_r', FB_0')\right)\in \pi_2'^{-1}(B_0',B_1',\ldots,B_r', FB_0').
\]

Hence via this cartesian square, we know that $\pi_2'$ is a $\mathbb P^1$-bundle over $\Xo(sw)$. The argument that $\pi_2'$ is locally trivial with respect to the Zariski topology is analogous to the one used in Lemma \ref{wsP1}. 

Finally, we return to the cartesian diagram (\ref{Fbasechange}). Since $\Xo(sw)$ is a $\Fpbar$-scheme of finite type, we know that the Frobenius endomorphism $F:\Xo(sw)\to \Xo(sw)$ is  flat, and is a universal homeomorphism. Thus the base change $\proj_1'$ of $F$ is also flat, and is a universal homeomorphism. Since $\pi_2'$ gives a Zariski locally trivial $\mathbb P^1$-bundle, and the fppf topology is finer than the Zariski topology, there exists a fppf open covering $\mathcal U:=\{f_i:U_i\to \Xo(sw)\}_i$ such that $\pi_2'$ is locally trivial with respect to $\pi_2'$. We get a  composition of cartesian diagrams for each $i$:

\begin{equation*}
	\begin{tikzcd}
U_i\times \mathbb P^1 \arrow[r] \arrow[d] & Y \arrow[r, "\proj_1'"]\arrow[d,"\pi_2'"'] 
& \Xo(sws) \arrow[d, "\pi_2"] \\
U_i\arrow[r, "f_i"] & \Xo(sw)\arrow[r, "F"]
& \Xo(sw).
\end{tikzcd}
\end{equation*}
Since $F$ is flat and surjective, we see that the composition morphism $F\circ f_i$ is flat, locally of finite presentation for all $i$ and 
\[
\bigcup_i \left(F\circ f_i(U_i)\right)=\Xo(sws).
\]
Thus $\mathcal U':=\{F\circ f_i: U_i\to \Xo(sw)\}$ gives a fppf covering for $\Xo(sw)$ such that $\pi_2$ is locally trivial with respect to $\mathcal U'$. 
		\end{proof}

		\begin{remark}
			Recall that proper morphisms are preserved under fpqc (hence fppf) base change and descent \cite[Expos\'{e} VIII, Corollary 4.8]{SGA1}. We see that the map $\pi_1$ and $\pi_2$ in Lemma \ref{wsP1} and \ref{swP1} are also proper morphisms of $\Fpbar$-schemes. 
			
			Note that the constructions of $\pi_1$ and $\pi_2$ in
			 Lemma \ref{P1} and \ref{swP1} use the fact that $F(s)=s$. 
		\end{remark}	 	
		
\begin{lemma}
 The maps  $\pi_1:\Xo(sws)\to \Xo(ws)$ and $\pi_2:\Xo(sws)\to \Xo(sw)$  defined above are $G(\Fq)$-equivariant. 
\end{lemma}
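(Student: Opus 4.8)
The plan is to verify $\GL_n(\Fq)$-equivariance directly from the explicit formulas for $\pi_1$ and $\pi_2$, using the fact that the generalized Deligne--Lusztig varieties and their smooth compactifications carry the diagonal $G^F$-action on $X^{r+3}$ (resp.\ $X^{r+2}$), and that this action commutes with the Frobenius $F$ since $F$ is defined over $\Fq$. First I would recall that for $g\in \GL_n(\Fq)=G^F$, the action on a tuple $(B_0,\dots,B_{r+2})\in \Xo(sws)$ is by simultaneous conjugation, $g.(B_0,\dots,B_{r+2})=(gB_0g^{-1},\dots,gB_{r+2}g^{-1})$, and that this is well-defined because each relative-position condition $(B_j,B_{j+1})\in\overline{O(t_j)}$ is preserved (the orbits $O(w)$, and hence their closures, are $G$-stable by construction) and because $g F(B_0) g^{-1}=F(gB_0g^{-1})$ for $g\in G^F$, so the boundary condition $B_{r+2}=FB_0$ is preserved.

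Next I would simply apply $g$ to the defining formula for $\pi_1$. Writing $B=(B_0,B_1,\dots,B_{r+1},FB_0)$, we have $\pi_1(B)=(B_1,\dots,B_{r+1},FB_1)$, so
\[
\pi_1(g.B)=\pi_1(gB_0g^{-1},gB_1g^{-1},\dots,gB_{r+1}g^{-1},gFB_0g^{-1})=(gB_1g^{-1},\dots,gB_{r+1}g^{-1},F(gB_1g^{-1})),
\]
where the last entry uses $g(FB_1)g^{-1}=F(gB_1g^{-1})$ since $g\in G^F$; this is exactly $g.\pi_1(B)$. The same one-line computation handles $\pi_2$: with $\pi_2(B)=(B_{r+1},FB_1,\dots,FB_{r+1})$, applying $g$ and pulling $g$ through $F$ on each entry gives $\pi_2(g.B)=g.\pi_2(B)$. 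One should also note in passing that the section $\sigma$ of Lemma~\ref{wsP1} is equivariant by the same reasoning, though this is not part of the statement.

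There is essentially no obstacle here: the only point requiring care is the interchange of the $G^F$-action with $F$, i.e.\ that conjugation by an $\Fq$-rational element commutes with the Frobenius endomorphism of $X$, which is precisely the definition of $G^F$ as the Frobenius-fixed points and is already implicit in the well-definedness of the $G^F$-actions on $X(sws)$, $X(ws)$, $X(sw)$ recalled in Definition~\ref{dfndlv} and Section~\ref{generalsmthcompt}. So the proof is just: the actions are the restrictions of the diagonal action on products of $X$, the maps $\pi_1,\pi_2$ are given by reindexing coordinates and applying $F$ to some of them, and both operations commute with simultaneous conjugation by $g\in G^F$.
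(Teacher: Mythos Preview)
Your proof is correct and follows essentially the same approach as the paper: a direct verification on points using the explicit formulas for $\pi_1$ and $\pi_2$, together with the key identity $g(FB)g^{-1}=F(gBg^{-1})$ for $g\in G^F$. The paper's argument is slightly terser and does not mention the section $\sigma$, but the substance is identical.
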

\begin{proof}
 Take $(B_0,B_1,\ldots,B_{r+1}, FB_0)\in \Xo(sws)$. Recall that $G(\Fq)$ acts on $\Xo(sws)$ via conjugation in each component. Let $g\in G(\Fq)$. 
		\[
		\pi_1\left(gB_0g^{-1},\ldots,gB_{r+1}g^{-1}, g(FB_0)g^{-1}\right)=\left(gB_1g^{-1},\ldots,gB_{r+1}g^{-1}, F(gB_1g^{-1})\right)
		\]
		Since any $g\in G(\Fq)$ is fixed by $F$, we have $F(gB_1g^{-1})=g(FB_1)g^{-1}$ and thus
		\[
		 \pi_1\left(gB_0g^{-1},\ldots,gB_{r+1}g^{-1}, g(FB_0)g^{-1}\right)=g.\pi_1\left(B_0,B_1,\ldots,B_{r+1}, FB_0\right).
		\]

	For $\pi_2$, let $(B_0,B_1,\ldots,B_{r+1}, FB_0)\in \Xo(sws)$, and $g\in G(\Fq)$. Consider the following:
	\begin{multline*}
		\pi_2\left(gB_0g^{-1},gB_1g^{-1},\ldots, g(FB_0)g^{-1}\right)= \\\left(gB_{r+1}g^{-1},F(gB_1g^{-1}),\ldots,F(gB_{r+1}g^{-1})\right).
	\end{multline*}
Since $F$ fixes $g$ and $g^{-1}$, we have $F(g^{-1}B_i g)=B_i$ for all $i$.  Thus
\[
 \pi_2\left(gB_0g^{-1},gB_1g^{-1},\ldots, g(FB_0)g^{-1}\right)=g.\pi_2 \left(B_0,B_1,\ldots,B_{r+1}, FB_0\right).
\]
\end{proof}

\subsection{Cohomology of the structure sheaf of the $\mathbb P^1$-bundles}
			 Recall that the smooth compactifications of Deligne--Lusztig varieties are smooth, separated schemes of finite type over $\Fpbar$. 
		
			\begin{proposition}\label{p1coh}
		 Let $w\in \hat F^+$, $ s\in S$. Then for all $k\geq 0$, there are $G(\Fq)$-equivariant isomorphisms of $\Fpbar$-vector spaces:
		 \[H^k\Big(\Xo(ws),\mathcal O_{\Xo(ws)}\Big)\overset{\sim}{\longrightarrow} H^k\Big(\Xo(sws),\mathcal O_{\Xo(sws)}\Big)\]
		 and
		 \[H^k\Big(\Xo(sw),\mathcal O_{\Xo(sw)}\Big)\overset{\sim}{\longrightarrow} H^k\Big(\Xo(sws),\mathcal O_{\Xo(sws)}\Big).
		 \]
		\end{proposition}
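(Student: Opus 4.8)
The plan is to deduce both isomorphisms from the fact, established in Lemmas \ref{wsP1} and \ref{swP1}, that $\pi_1 \colon \Xo(sws) \to \Xo(ws)$ and $\pi_2 \colon \Xo(sws) \to \Xo(sw)$ are $\mathbb{P}^1$-bundles locally trivial for a suitable topology (Zariski for $\pi_1$, fppf for $\pi_2$). The standard mechanism is the Leray spectral sequence $H^i(\Xo(ws), R^j(\pi_1)_* \Osh_{\Xo(sws)}) \Rightarrow H^{i+j}(\Xo(sws), \Osh_{\Xo(sws)})$, so the whole matter reduces to computing the higher direct images $R^j (\pi_i)_* \Osh_{\Xo(sws)}$ and checking they vanish for $j > 0$ and equal $\Osh$ of the base for $j = 0$.

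First I would treat $\pi_1$. Because $\pi_1$ is a Zariski-locally trivial $\mathbb{P}^1$-bundle, over each open $U$ of the trivializing cover we have $\pi_1^{-1}(U) \cong U \times \mathbb{P}^1$, and by the projection formula / flat base change together with the classical computation $H^0(\mathbb{P}^1, \Osh) = \Fpbar$, $H^j(\mathbb{P}^1,\Osh) = 0$ for $j>0$, one gets $R^0(\pi_1)_*\Osh_{\Xo(sws)} \cong \Osh_{\Xo(ws)}$ and $R^j(\pi_1)_*\Osh_{\Xo(sws)} = 0$ for $j > 0$; these identifications glue since they are canonical. Then the Leray spectral sequence degenerates and yields the edge isomorphism $H^k(\Xo(ws), \Osh_{\Xo(ws)}) \overset{\sim}{\to} H^k(\Xo(sws), \Osh_{\Xo(sws)})$ for all $k$. (Alternatively, since $\pi_1$ has the explicit section $\sigma$ from Lemma \ref{wsP1}, one may phrase this as: the adjunction unit $\Osh_{\Xo(ws)} \to (\pi_1)_*\Osh_{\Xo(sws)}$ is an isomorphism and higher direct images vanish, and $\sigma^*$ provides a one-sided inverse on cohomology — but the spectral sequence argument already gives a full isomorphism without invoking $\sigma$.)

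For $\pi_2$ the argument is the same in spirit, with one extra point: $\pi_2$ is only asserted to be fppf-locally trivial, not Zariski-locally trivial. So to compute $R^j(\pi_2)_*\Osh_{\Xo(sws)}$ I would pass to an fppf cover $\{f_i \colon U_i \to \Xo(sw)\}$ over which $\pi_2$ becomes the trivial bundle, compute the higher direct images there (again via $H^*(\mathbb{P}^1, \Osh)$), and then descend: since $\Osh_{\Xo(sw)}$ and the higher cohomology sheaves are quasi-coherent, fppf descent for quasi-coherent sheaves (equivalently, flat base change along the cover, using that $\pi_2$ is proper so the formation of $R^j(\pi_2)_*$ commutes with flat base change) shows $R^0(\pi_2)_*\Osh_{\Xo(sws)} \cong \Osh_{\Xo(sw)}$ and $R^j = 0$ for $j>0$ on $\Xo(sw)$ itself. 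Here it matters that $\pi_2$ is proper — noted in the Remark after Lemma \ref{swP1} — so that Grothendieck's theorem on cohomology and base change applies; this is what lets us check the vanishing after the flat (fppf) base change $\proj_1' \colon Y \to \Xo(sws)$. The Leray spectral sequence for $\pi_2$ then degenerates, giving the second isomorphism.

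I expect the only real subtlety to be the fppf-descent step for $\pi_2$: one must be slightly careful that "$\mathbb{P}^1$-bundle fppf-locally" really does force the higher direct images of $\Osh$ to be what they are Zariski-locally, and this is exactly where properness of $\pi_2$ (hence base-change compatibility of $R(\pi_2)_*$) is used. Everything else — the cohomology of $\Osh_{\mathbb{P}^1}$, the degeneration of Leray, and the compatibility with the $\GL_n(\Fq)$-action (which follows because all the maps involved, including $\pi_1,\pi_2$ and the base-change maps, are $\GL_n(\Fq)$-equivariant by the preceding lemmas, and the Leray edge maps are functorial) — is routine. I would state the $\GL_n(\Fq)$-equivariance of the isomorphisms explicitly at the end, since it is needed later, noting it is immediate from functoriality of the Leray spectral sequence applied to the equivariant morphisms $\pi_1, \pi_2$.
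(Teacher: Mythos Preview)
Your proposal is correct and follows essentially the same route as the paper: both arguments use the Leray spectral sequence for $\pi_i$, compute the higher direct images $R^q(\pi_i)_*\Osh$ by passing to a (Zariski or fppf) trivializing cover and invoking flat base change together with the cohomology of $\Osh_{\mathbb{P}^1}$, and then use fppf descent for quasi-coherent sheaves to conclude degeneration. The only cosmetic difference is that the paper treats $\pi_1$ and $\pi_2$ uniformly in the fppf topology rather than splitting into a Zariski case and an fppf case, and it justifies $\pi_*\Osh_X\cong\Osh_Y$ via miracle flatness plus geometric connectedness of fibres rather than directly from local triviality.
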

	
	\begin{proof}
	To simplify our notations, we use $X:=\Xo(sws), Y:= \Xo(ws)$ (resp. $Y:=\Xo(sw)$). Let $\pi$ be $\pi_1$ as in Lemma \ref{wsP1} (resp. be $\pi_2$ as in Lemma \ref{swP1}), and 
		consider the Leray spectral sequence for $\mathcal O_X$:
		\begin{equation}\label{YXspectral}
			E_2^{i,j}=H^i\left(Y,R^j\pi_*\mathcal O_X\right)\implies H^{i+j}\left(X,\mathcal O_X\rightp.
		\end{equation}
		Note that  $X, Y$ are smooth schemes of finite type over $\Fpbar$. Since the (closed) fibres of $\pi$ are equidimentional and $\dim_{\Fpbar} X=\dim_{\Fpbar}Y+\dim_{\Fpbar}\P^1_{\Fpbar}$, by miracle flatness \cite[Theorem 23.1]{Matsumura}, we know that $\pi:X\to Y$ is a flat morphism. Since the fibres are isomorphic to $\mathbb P^1_{\Fpbar}$, they are geometrically connected and geometrically reduced. By \cite[\textsection 9.3.11]{FGAexplained}, we know that  $\pi_*\mathcal O_X\cong \mathcal O_Y$.

        Note that $X$ is geometrically reduced and $\pi$ has connected fibres. By \cite[Prop 1.4.10]{EGA31}, we know that for $j>0$, $R^j\pi_*(\mathcal O_X)$ are coherent $\Osh_Y$-modules. We shall show that $R^j\pi_*\mathcal{O}_X=0$ for all $j>0$. Note that for all $y\in Y$, we have $X\times_Y y\cong \mathbb{P}^1$, so $H^j(X\times_Y y, \Osh_{X\times_Y y})=0$ for all $j>0$. Since in addition $\pi$ is proper and flat, it follows from \cite[\textsection 25.1.5, \textsection 25.1.6]{Vakil} that $\psi_y^*R^j\pi_*(\mathcal O_X)=0$ for all $j>0$, where $\psi_y:y\to Y$ is the inclusion map. This implies that $R^j\pi_*(\mathcal O_X)$ has trivial stalks for all $y\in Y$ and $j>0$. Hence $R^j\pi_*(\mathcal O_X)=0$ for all $j>0$.

			  	Therefore the Leray spectral sequence (\ref{YXspectral}) degenerates and we have 
		\[
		H^i(Y,\mathcal O_Y)\overset{\sim}{\longrightarrow} H^i(X,\mathcal O_X)
		\]		
		for all $i\geq 0$. Since $\pi$ is a $G(\Fq)$-equivariant morphism, these isomorphisms are $G(\Fq)$-equivariant. 		
	\end{proof}

\section{Towards induction steps}\label{steps}

Let $G=\GL_n$ as in \textsection \ref{GLdfn}.
 We now set the stage for the (double) induction. Our goal is to reduce the problem of computing the cohomology groups of coherent sheaves on the smooth compactifications of Deligne--Lusztig varieties to those corresponding to a Coxeter element of $W$ or a Coxeter element corresponding to a standard parabolic subgroup $P\subseteq \GL_{n}$.

 In loose terms, we may describe the strategy as follows: 
 Let $w$ be an element of the free monoid $F^+$ or $\hat F^+$. Its expression may contain a repeating $s\in S$. 
 We introduce operations $C, K, R$ on $F^+$ and $\hat F^+$ so that after applying finitely many such operations on $w$, we may obtain a word of the form $sw's$. The operations $C,K,R$ preserve the length of $w$, so we will still have $\ell(w)=\ell(sw's)=\ell(w')+2$. Then Section \ref{sws} helps us to reduce this to the case of $sw'$ and thus removing one of the repeating $s$. This process has finitely many steps and we will eventually reduce it to the case of $v\in W$ being a product of non-repeating simple reflections with $\ell(v)=\vert\normalfont\text{supp}(w)\vert $. 
 
 We will introduce each of the operations $C,K, R$ and discuss how they affect the cohomology groups of the corresponding smooth compactifications of Deligne--Lusztig varieties. 
 
One can find the original definitions of these operations 
and the double induction strategy in \cite[before Proposition 7.9]{Orlik18} for the case of  $\ell$-adic cohomology with compact support.

\subsection{The Cyclic shifting operation}
The elements $sw', w's\in W$ are conjugated by $s\in S$. Recall from Definition \ref{cyclicshiftdfn} and \ref{cyclicshiftdfn2}
that this can be generalized to elements of $W$ being conjugate by cyclic shift. The following operator is constructed to impose the concept of elements being conjugate by cyclic shift on $F^+$ and $\hat F^+$.

\begin{dfn}\label{operatorC}\normalfont
	Let $w\in F^+$ \big(resp. $\hat{F}^+$\big). If $w=sw'$, where $w'\in F^+$ \big(resp. $\hat{F}^+$\big) and $s\in S$, we define the operator $C$ on $F^+$ \big(resp. $\hat{F}^+$\big) by $C(w):=w's$. 
	\end{dfn}

	\begin{proposition}\label{cyclicshift}
	Let $w\in \hat{F}^+$, such that $w=sw'$ with $s\in S$. Then we have isomorphisms of $\Fpbar$-vector spaces for all $k\geq 0$:
	\[
	H^k\left(\overline{X}(w), \Osh_{\overline{X}(w)}\right)\overset{\sim}{\longrightarrow} H^k \left(\overline{X}(C(w)), \Osh_{\overline{X}(C(w))}\right).
	\]
	Furthermore, the isomorphism are $\GL_n(\Fq)$-equivariant.
\end{proposition}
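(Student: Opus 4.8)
The plan is to deduce this from the two $\mathbb{P}^1$-bundle comparisons of Section~\ref{sws} by passing through the longer word $sw's$. Write $w=sw'$ with $w'\in\hat F^+$ and $s\in S$; then $C(w)=w's$ by Definition~\ref{operatorC}, and $sw's$ again lies in $\hat F^+$, being the concatenation of $s\in S$ with $w'\in\hat F^+$. Note that $sw'=w$ and $w's=C(w)$, so $\overline{X}(sw's)$ sits as the total space of the $\mathbb{P}^1$-bundle $\pi_1\colon\overline{X}(sw's)\to\overline{X}(w's)$ of Lemma~\ref{wsP1} and, simultaneously, of the $\mathbb{P}^1$-bundle $\pi_2\colon\overline{X}(sw's)\to\overline{X}(sw')$ of Lemma~\ref{swP1} (here one applies those lemmas with their element ``$w$'' taken to be our $w'$, keeping ``$s$'').

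First I would invoke Proposition~\ref{p1coh} for the word $w'$ and the reflection $s$: it yields that the pullback maps are isomorphisms on structure-sheaf cohomology in every degree $k\geq 0$, namely $\pi_2^*\colon H^k\left(\overline{X}(w),\Osh_{\overline{X}(w)}\right)\overset{\sim}{\longrightarrow}H^k\left(\overline{X}(sw's),\Osh_{\overline{X}(sw's)}\right)$ and $\pi_1^*\colon H^k\left(\overline{X}(C(w)),\Osh_{\overline{X}(C(w))}\right)\overset{\sim}{\longrightarrow}H^k\left(\overline{X}(sw's),\Osh_{\overline{X}(sw's)}\right)$. Composing one with the inverse of the other, the map $(\pi_1^*)^{-1}\circ\pi_2^*$ is the desired isomorphism $H^k\left(\overline{X}(w),\Osh_{\overline{X}(w)}\right)\overset{\sim}{\longrightarrow}H^k\left(\overline{X}(C(w)),\Osh_{\overline{X}(C(w))}\right)$. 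For the $\GL_n(\Fq)$-equivariance I would use that $\pi_1$ and $\pi_2$ are $\GL_n(\Fq)$-equivariant morphisms (established just after Lemma~\ref{swP1}), whence $\pi_1^*$ and $\pi_2^*$ commute with the $\GL_n(\Fq)$-action by functoriality, and therefore so does their composite.

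Since essentially all of the content is already packaged into Section~\ref{sws}, I do not expect a genuine obstacle: the argument is formal once Proposition~\ref{p1coh} is in hand. The only points needing care are the bookkeeping of which word plays the role of ``$w$'' in the two applications of the $\mathbb{P}^1$-bundle comparison, and the degenerate case where $w'$ is the empty word --- there $w=C(w)=s$ and the statement is a tautology, while $sw's=ss$ and the $\mathbb{P}^1$-bundle comparison still applies verbatim.
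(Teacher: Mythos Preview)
Your proposal is correct and follows essentially the same approach as the paper: pass through the longer word $sw's$, apply the two $\mathbb P^1$-bundle comparisons of Proposition~\ref{p1coh} (one via $\pi_1$ to $\Xo(w's)=\Xo(C(w))$, one via $\pi_2$ to $\Xo(sw')=\Xo(w)$), and compose; the $\GL_n(\Fq)$-equivariance comes from the equivariance of $\pi_1,\pi_2$.
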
 
	\begin{proof}
Let $w\in \hat{F}^+$, such that $w=sw'$ with $s\in S$ and $ w'\in \hat{F}^+$. 

Consider the product $ws=sw's$ in $\hat{F}^+$.	We consider from Section \ref{P1} the surjective morphisms $\pi_1: \Xo(sw's)\to\Xo(w's)$ and $\pi_2:\Xo(sw's)\to \Xo(sw')$ that make $\Xo(sw's)$ a fppf $\mathbb P^1$-bundle over $\Xo(w's)$ and $\Xo(sw')$ respectively. 
	
 By Proposition \ref{p1coh}, we have $\GL_n(\Fq)$-equivariant isomorphisms for all $k\geq 0$,

\[
H^k\leftp\Xo(sw'),\mathcal O_{\Xo(sw')}\rightp\cong H^k\leftp\Xo(sw's),\mathcal O_{\Xo(sw's)}\rightp\cong H^k\leftp \Xo(w's),\mathcal O_{\Xo(w's)}\rightp.
\]
Thus 
\[
H^k\leftp\Xo(w),\mathcal O_{\Xo(w)}\rightp\cong H^k\leftp \Xo(C(w)),\mathcal O_{\Xo(C(w))}\rightp,
\]
and this concludes the proof. 
	\end{proof}

\subsection{Operations corresponding to relations}

Recall from Definition \ref{hatF} that $\hat{F}^+$ is generated by $S\dot\cup T'$. 
\begin{dfn}\label{operatorsKR}\normalfont
Let $w\in \hat{F}^+$, such that $w=w_1stw_2$ with $w_1,w_2\in \hat{F}^+$, $s,t\in S$. 
Define the operator $K$ on this expression by
\[
\arraycolsep=2pt\def\arraystretch{1}
K(w;w_1,w_2)=\left\{
\begin{array}{ll}
w_1tsw_2, & \text{if }st=ts \ \text{\normalfont nontrivial in } W,\\
w, & \text{otherwise.}
\end{array}
\right.
\]

Let $w\in \hat{F}^+$, such that $w=w_1sts w_2$ with $w_1, w_2\in \hat{F}^+$, $s,t\in S$.
Define the operator $R$ on this expression by
\[
\arraycolsep=2pt\def\arraystretch{1}
R(w;w_1,w_2)=\left\{
\begin{array}{ll}
w_1tstw_2, & \text{if }sts=tst \ \text{\normalfont nontrivial in } W,\\
w, & \text{otherwise.}
\end{array}
\right.
\]
When an expression of $w$ is specified as above and there is no ambiguity, we would use the notations $K(w)$ for $K(w;w_1,w_2)$ and $R(w)$ for $R(w;w_1,w_2)$.
\end{dfn}

\begin{remark}
	We clearly have $K(w), R(w)\in \hat{F}^+$. Also observe that the operators $K$ and $R$ are analogous to two of the relations in the presentation of the symmetric group $S_n$.
\end{remark}

\begin{proposition}\label{grouprel}

(i)
	Let $w=w_1stw_2$ such that $w_1,w_2\in \hat F^+$ and $s,t\in S$ with $st=ts$ in $W$.	Then for all $i\geq 0$, we have isomorphisms of $\Fpbar$-vector spaces:
	\[
	H^i\left(\overline{X}(w), \Osh_{\overline{X}(w)}\right)\overset{\sim}{\longrightarrow} H^i \left(\overline{X}(K(w)), \Osh_{\overline{X}(K(w))}\right).
	\]
	\par\smallskip
	(ii) Let  $w=w_1sts w_2$ such that $w_1, w_2\in \hat F^+$ and $s,t\in S$ with $sts=tst$ in $W$. For all $i\geq 0$, we have isomorphisms of $\Fpbar$-vector spaces:
	\[
		H^i\left(\Xo(w),\Osh_{\Xo(w)}\right)\overset{\sim}{\longrightarrow} H^i\left(\Xo(R(w)),\Osh_{\Xo(R(w))}\right).
	\] 

	\end{proposition}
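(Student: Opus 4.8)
The plan is to deduce both isomorphisms from the theorem of Chatzistamatiou--R\"ulling \cite[Theorem 3.2.8]{CR11} --- for a proper birational morphism $f\colon X'\to X$ of smooth schemes of finite type over a perfect field one has $Rf_*\Osh_{X'}\cong\Osh_X$, hence $H^i(X',\Osh_{X'})\overset{\sim}{\longrightarrow}H^i(X,\Osh_X)$ for all $i\geq 0$ --- by constructing a common smooth projective target $Z$ admitting proper, $\GL_n(\Fq)$-equivariant, birational morphisms from $\Xo(w)$ and from $\Xo(K(w))$ (resp. $\Xo(R(w))$). The scheme $Z$ is obtained by forgetting the Borel subgroups that sit strictly inside the rewritten block, and chaining the two resulting isomorphisms through $Z$ gives the statement. (Equivariance of the morphisms is clear, although the proposition only records the isomorphism of $\Fpbar$-vector spaces.)

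For part (ii), I would exploit that $sts=tst$ in $W$ forces $st\neq ts$, so the generator $\widehat{sts}\in T'\subseteq\hat F^+$ exists, and take $Z:=\Xo(w_1\widehat{sts}w_2)$; this is projective and smooth by Lemma \ref{XoF+smooth}, since $\overline{O(\widehat{sts})}=\overline{O(sts)}$ is the orbit closure attached to the longest element of $W_{\{s,t\}}\cong S_3$ and is smooth by \cite[Corollary 2.2.10]{DMR}. The morphism $\Xo(w_1stsw_2)\to Z$ forgets the two Borel subgroups in the interior of the $sts$-chain; it is well defined because $(B_0,B_1')\in\overline{O(s)}$, $(B_1',B_2')\in\overline{O(t)}$ and $(B_2',B_3)\in\overline{O(s)}$ force $(B_0,B_3)\in\overline{O(sts)}$ (the Bruhat lengths add: $\ell(sts)=1+1+1$), and it is proper as all schemes in sight are projective. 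For birationality I would restrict to the dense open of $Z$ on which the two endpoints of the block lie in the open orbit $O(sts)=O(tst)$: there, length-additivity forces the interior chain with successive relative positions $s,t,s$ to be unique, so the forgetful map is an isomorphism over that locus. The identical argument treats $\Xo(w_1tstw_2)\to Z$ with relative positions $t,s,t$, and since $O(sts)=O(tst)$ the two dense opens coincide; two applications of \cite[Theorem 3.2.8]{CR11} then give $H^i(\Xo(w_1stsw_2),\Osh)\cong H^i(Z,\Osh)\cong H^i(\Xo(w_1tstw_2),\Osh)$, which is (ii) since $R(w)=w_1tstw_2$.

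For part (i) the scheme of proof is the same, with the two-letter block $\overline{O(st)}$ in place of $\overline{O(\widehat{sts})}$: there is no generator of $\hat F^+$ playing the role of ``$\widehat{st}$'' when $st=ts$, so I would define $Z$ by hand as the fibre product defining $\Xo$ but with one factor $\overline{O(st)}$ replacing the two consecutive factors $\overline{O(s)}$, $\overline{O(t)}$. This case is in fact easier: since $s$ and $t$ commute, $\overline{O(st)}\to X$ is a Zariski-locally trivial $\P^1_{\Fpbar}\times\P^1_{\Fpbar}$-bundle (again \cite[Corollary 2.2.10]{DMR}), hence smooth, and forgetting the middle Borel identifies each of $\overline{O(s)}\times_X\overline{O(t)}$ and $\overline{O(t)}\times_X\overline{O(s)}$ with $\overline{O(st)}=\overline{O(ts)}$ through a bijective proper morphism of smooth $\Fpbar$-schemes, which is an isomorphism by normality of the target. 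Substituting this into the fibre-product description of $\Xo$ yields a $\GL_n(\Fq)$-equivariant isomorphism $\Xo(w)\cong\Xo(K(w))$ directly, a fortiori the claimed isomorphism on cohomology; alternatively one could route it through $Z$ exactly as in (ii).

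The hard part will be packaging the two geometric inputs cleanly: (a) birationality of the forgetful morphisms onto $Z$, i.e. pinning down the dense open over which the interior Borel subgroups of the rewritten block are uniquely recovered --- this is the standard ``relative positions compose uniquely once their Bruhat lengths add up'' phenomenon, but one must keep track of the several degenerate strata of the block where uniqueness can fail; and (b) smoothness of $Z$, which I would reduce to smoothness of the orbit closures $\overline{O(st)}$ and $\overline{O(sts)}$ inside $X$ together with the transversality of the graph $\Gamma_F$ with the stratification, so that the fibre product defining $Z$ stays smooth (cf. \cite[Proposition 2.3.5]{DMR}). With these in hand the cohomological conclusion is immediate; and if one prefers to avoid verifying smoothness of $Z$, the generalization \cite[Theorem 9.15]{Kovacs} to schemes with pseudo-rational singularities (as in Appendix \ref{AppendixA}) applies instead.
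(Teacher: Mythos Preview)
Your proposal is correct and follows essentially the same approach as the paper: both parts construct an intermediate smooth projective scheme $Z$ (namely $\Xo(w_1\widehat{sts}w_2)$ for (ii), and the scheme with a single factor $\overline{O(st)}$ for (i)) and apply \cite[Theorem~3.2.8]{CR11} to the proper birational forgetful maps from $\Xo(w)$ and $\Xo(K(w))$ (resp.\ $\Xo(R(w))$) to $Z$. Your observation for (i) that the forgetful maps $\overline{O}(s,t)\to\overline{O(st)}$ and $\overline{O}(t,s)\to\overline{O(ts)}$ are in fact isomorphisms (so that $\Xo(w)\cong\Xo(K(w))$ directly) is a mild streamlining --- the paper cites \cite[Proposition~2.2.16]{DMR} for this isomorphism but then still routes through the Chatzistamatiou--R\"ulling theorem rather than concluding directly.
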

\begin{proof}
Let $w$ be as in the assumption of (ii). Recall that $\Xo(w)$ is smooth and projective over $\Fpbar$. Recall from Definition \ref{hatF} that we have $\widehat{sts}\in T'$ because $st\neq ts$. The $\Fpbar$-scheme $ \Xo(w_1\widehat{sts}w_2)$ is projective and smooth by Lemma \ref{XoF+smooth}. In particular, points of $ \Xo(w_1\widehat{sts}w_2)$ are of the form $(B_0',\ldots,B_j', B_{j+1}',\ldots,FB_0')$, where $(B_j',B_{j+1}')\in \overline{O(sts)}$ for some $j$. Now we have a cartesian square:
\begin{equation*}
\begin{tikzcd}
	\Xo(w) \arrow[r]\arrow[d,"f"'] &\overline{O}(s,t,s)\arrow[d]\\
	\Xo(w_1\widehat{sts}w_2)\arrow[r] & \overline{O(sts)},
\end{tikzcd}
\end{equation*}
where the horizontal maps are projections, and the vertical map on the right is the resolution of singularities from \cite[\textsection 9.1]{DL}. Thus the projection map $f$ is proper. 

Observe that the open subscheme $X(w)$ of $\Xo(w)$ is also contained in $\Xo(w_1\widehat{sts}w_2)$ such that the restriction of $f$ to $X(w)$ is the identity. Hence $f$ is a birational morphism. 
By \cite[Theorem 3.2.8]{CR11}, we have for all $i\geq 0$, an isomorphism of $\Fpbar$-vector spaces:
\[
		H^i\left(\Xo(w),\Osh_{\Xo(w)}\right)\overset{\sim}{\longrightarrow} H^i\left(\Xo(w_1\widehat{sts}w_2),\Osh_{\Xo(w_1\widehat{sts}w_2)}\right).
	\] 
	
	For $\widehat{tst}\in T'$, we use the same argument as above to construct a proper birational morphism $f':\Xo(R(w))\to \Xo(w_1\widehat{sts}w_2)$. The key observation is that as $sts=tst$ in $W$, $st\neq ts$, the scheme $\overline{O}(t,s,t)$ gives a smooth compactification of $\overline{O(sts)}$. 
By \cite[Theorem 3.2.8]{CR11}, we have for all $i\geq 0$, an isomorphism of $\Fpbar$-vector spaces:
\[
		H^i\left(\Xo(w_1\widehat{sts}w_2),\Osh_{\Xo(w_1\widehat{sts}w_2)} \right)\overset{\sim}{\longrightarrow} H^i\left(\Xo(R(w)),\Osh_{\Xo(R(w))}\right).
	\] 
This concludes (ii). 
 
 Let $w$ be as in the assumption of (i), and set $\ell(w_1)=r_1$ and $\ell(w_2)=r_2$. For this proof, we define the following projective $\Fpbar$-scheme:
 \begin{multline*}
Y:=\left\{(B_0',\ldots,B_{r_1}', B_{r_1+1}',\ldots,B_{r_1+r_2}', FB_0')\in X^{r_1+r_2+2}\Big\vert (B_{r_1}',B_{r_1+1}')\in \overline{O(st)},\right.\\
 \left. (B_0',\ldots,B_{r_1}')\in \overline{O}(w_1), (B_{r_1+1}',\ldots,B_{r_1+r_2}', FB_0)\in \overline{O}(w_2), \right\}.
 \end{multline*}
  If $s=t$, then the statement is clear. Assume $ s\neq t$ in the following. Since $st= ts$, we see that they are associated to non-adjacent simple reflections. By \cite[Proposition 2.2.16 (iii), (iv)]{DMR}, there exists an isomorphism $\overline{O}(s,t)\to \overline{O(st)}$ such that the restriction to the open subschemes $O(s,t)\to O(st)$ remains an isomorphism. Then $\overline{O(st)}$ is smooth and thus $Y$ is smooth. 
  
  We have an cartesian square:
    \begin{equation*}
  \begin{tikzcd}
  	\Xo(w)\arrow[r]\arrow[d, "f"'] & \overline{O}(s,t)\arrow[d]\\
  	Y \arrow[r] &\overline{O(st)}.
  \end{tikzcd}
  \end{equation*}
  Thus the projection map $f$ on the left is proper. As the restriction of $f$ to $X(w)$ is the identity morphism, we see that $f$ is birational. By \cite[Theorem 3.2.8]{CR11}, for all $i\geq 0$, there is an isomorphism of $\Fpbar$-vector spaces:
  
  	\[
	H^i\left(\overline{X}(w), \Osh_{\overline{X}(w)}\right)\overset{\sim}{\longrightarrow} H^i \left(	Y , \Osh_{	Y}\right).
	\]
  On the other hand, since $st=ts$, we have $O(st)=O(ts)$ and thus $\overline{O(st)}=\overline{O(ts)}$. As in \cite[\textsection 9.1]{DL}, there is a resolution of singularity $\overline{O}(t,s)\to \overline{O(ts)}$. Hence we have a proper birational morphism $\overline{O}(t,s)\to \overline{O(st)}$. Thus we have a cartesian square with horizontal maps being projections:
     \begin{equation*}
  \begin{tikzcd}
  	\Xo(K(w))\arrow[r]\arrow[d, "f'"'] & \overline{O}(t,s)\arrow[d]\\
  	Y \arrow[r] &\overline{O(st)}.
  \end{tikzcd}
  \end{equation*}
Thus $f'$ is proper. Note that the restriction of $f'$ to $X(w_1tsw_2)$ gives an isomorphism between the respective open subschemes $X(w_1tsw_2)$ and $X(w_1stw_2)$ of $\Xo(K(w))$ and $Y$. Thus $f'$ is birational. 
By \cite[Theorem 3.2.8]{CR11}, for all $i\geq 0$, there is an isomorphism of $\Fpbar$-vector spaces:
		\[
	H^i \left(	Y, \Osh_{	Y}\right) \overset{\sim}{\longrightarrow} H^i \left(\overline{X}(K(w)), \Osh_{\overline{X}(K(w))}\right).
	\]
This concludes the proof of (i). 

  \end{proof}

\begin{remark}
	In order to apply \cite[Theorem 3.2.8]{CR11}, we require both schemes in the birational morphism to be smooth over  a perfect field. In fact, this theorem has been generalized to the case where the schemes are allowed to have pseudo-rational singularities \cite[Theorem 8.13]{Kovacs}. In Appendix \ref{AppendixA} we shall see how to use this generalization to show that in the case of arbitrary $G$ as in Section \ref{Gdefn}, the cohomology groups of $\Xo(w)$ and $\overline{X(w)}$  for the structure sheaves (resp. canonical bundles) are isomorphic.  The proposition above then follows from Corollary \ref{cohequiv}.

	Since we will only encounter representations induced from the trivial representation for the proofs of our main results (see Corollary \ref{nonstandardGK} and Proposition \ref{ZpnZ}), we do not need the statement that the isomorphisms in Proposition \ref{grouprel} are $G^F$-equivariant. 
	\end{remark}

\section{The base case}\label{section5}
Let $G=\GL_{n}$. Sections \ref{sws} and \ref{steps} have reduced the study of $H^k\big( \Xo(w),\mathcal O_{\Xo(w)} \big)$ to the case in which $w$ is a Coxeter element or a Coxeter element corresponding to a standard parabolic subgroup $P\subseteq \GL_{n}$. We shall now treat these cases. 

\subsection{Cohomology of $\Xo(w)$ for $w$  a Coxeter element}
We use the notations of Section \ref{GLdfn} and \ref{dhsandmore}. Recall that $\mathbf w$ denotes the standard Coxeter element $s_1\cdots s_{n-1}$.

\begin{proposition}\label{basecase}
 For $k>0$, we have $H^k\big(\Xo(\mathbf w), \Osh_{\Xo(\mathbf w)}\big)=0$. Then the space of global sections
 \[
 H^0\left(\Xo(\mathbf w), \Osh_{\Xo(\mathbf w)}\right)=\Fpbar
 \]
 is the trivial $\GL_n(\Fq)$-representation. 
 \end{proposition}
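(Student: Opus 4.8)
The plan is to reduce the proposition directly to Gro\ss e-Kl\"{o}nne's theorem by way of the explicit description of $\Xo(\mathbf w)$ recorded in Section \ref{dhsandmore}, and then to pin down the $\GL_n(\Fq)$-action using Lemma \ref{trivGrep}. So the argument has essentially three steps, and almost all of the geometric content is imported rather than produced.

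First I would recall from Section \ref{dhsandmore} that the smooth compactification $\Xo(\mathbf w)=\Xo(s_1\cdots s_{n-1})$ attached to the reduced expression $\mathbf w=s_1\cdots s_{n-1}$ is $\GL_n(\Fq)$-equivariantly isomorphic to the iterated blow-up $\widetilde{\mathcal{X}}^{n-1}_{\Fpbar}$ of $\P^{n-1}_{\Fpbar}$ along all $\Fq$-rational linear subschemes. This scheme is obtained by base change to $\Fpbar$ from the analogous blow-up $\widetilde{\mathcal{X}}^{n-1}_{\Fq}$ of $\P^{n-1}_{\Fq}$, which is the smooth compactification of $\mathcal{X}^{n-1}_{\Fq}$ treated in \cite[Theorem 2.3]{GK}; since $\widetilde{\mathcal{X}}^{n-1}_{\Fq}$ is proper over $\Fq$ and $\Spec\Fpbar\to\Spec\Fq$ is flat, cohomology of the structure sheaf is compatible with this base change. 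Thus the vanishing $H^k\big(\Xo(\mathbf w),\Osh_{\Xo(\mathbf w)}\big)=0$ for $k>0$ and the identification $H^0\big(\Xo(\mathbf w),\Osh_{\Xo(\mathbf w)}\big)=\Fpbar$ are exactly the content of the theorem of Gro\ss e-Kl\"{o}nne recorded in the introduction.

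It then remains to determine the $\GL_n(\Fq)$-module structure on the one-dimensional space $H^0\big(\Xo(\mathbf w),\Osh_{\Xo(\mathbf w)}\big)$. Since $\Osh_{\Xo(\mathbf w)}$ carries its canonical $\GL_n(\Fq)$-linearization (Section \ref{Gequiv}), its cohomology groups are $\GL_n(\Fq)$-modules, and because $H^0=\Fpbar$ is one-dimensional, Lemma \ref{trivGrep} applies verbatim and shows this representation is trivial. I do not expect a genuine obstacle here: the only points that need attention are the $\GL_n(\Fq)$-equivariance of the isomorphism $\Xo(\mathbf w)\cong\widetilde{\mathcal{X}}^{n-1}_{\Fpbar}$ — which is precisely what makes Lemma \ref{trivGrep} applicable — and the harmless base change from $\Fq$ to $\Fpbar$; if anything is delicate it is keeping the group action in sight throughout, not the cohomology computation itself.
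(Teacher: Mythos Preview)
Your proof is correct, but it takes a different route from the paper's primary argument. The paper does not invoke Gro\ss e-Kl\"{o}nne's theorem directly; instead it uses the blow-up description from Section~\ref{dhsandmore} to produce a birational morphism $\Xo(\mathbf w)\to\P^{n-1}_{\Fpbar}$ and then appeals to the birational invariance result of Chatzistamatiou--R\"{u}lling \cite[Theorem~3.2.8]{CR11} to transport the known cohomology of $\P^{n-1}_{\Fpbar}$ to $\Xo(\mathbf w)$. The conclusion for the $\GL_n(\Fq)$-action via Lemma~\ref{trivGrep} is the same in both. Your approach---base-changing Gro\ss e-Kl\"{o}nne's computation from $\Fq$ to $\Fpbar$---is in fact the alternative the paper records in the remark immediately after the proposition. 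What the paper's route buys is internal coherence: \cite{CR11} is already a workhorse elsewhere (Proposition~\ref{grouprel}), so no new external input is needed; your route is shorter but imports a result the paper otherwise only cites as background.

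One small inaccuracy worth noting: you say the $\GL_n(\Fq)$-equivariance of the isomorphism $\Xo(\mathbf w)\cong\widetilde{\mathcal X}^{n-1}_{\Fpbar}$ is ``precisely what makes Lemma~\ref{trivGrep} applicable.'' It is not. Lemma~\ref{trivGrep} only requires that the scheme carry a $G^F$-action and that its global sections be $\Fpbar$; the action on $\Xo(\mathbf w)$ comes from its definition, and the isomorphism is used purely to compute $H^0$ as an $\Fpbar$-vector space. The equivariance is pleasant but not load-bearing here.
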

 
\begin{proof}
Recall that $\Xo(\mathbf w)$ is isomorphic to the successive blow up $\tilde Y$ of $\P^{n-1}_{\Fpbar}$ along all $\Fq$-rational linear subschemes \cite[\textsection 4.1]{Ito} \cite[\textsection 4.1.2]{HWang}. Thus there exists a birational morphism  of $\Fpbar$-schemes $\Xo(\mathbf w)\to \P^{n-1}_{\Fpbar}$. By
 \cite[Theorem 3.2.8]{CR11},
 for all $k\geq 0$, there is an isomorphism of $\Fpbar$-vector spaces:
 \[
 H^k\left(\Xo(\mathbf w), \Osh_{\Xo(\mathbf w)}\right)\overset{\sim}{\longrightarrow}H^k\left(\P^{n-1}_{\Fpbar}, \Osh_{\Xo(\mathbf w)}\right).
 \]
Hence we have
 \[\arraycolsep=2pt\def\arraystretch{1.2}
H^k\left(\Xo(\mathbf w), \Osh_{\Xo(\mathbf w)}\right)=\left\{
\begin{array}{lc}
\Fpbar, & k=0\\
0, & k>0.
\end{array}
\right. 
\]
Therefore it follows that $\GL_n(\Fq)$ acts on  $H^0\big(\Xo(\mathbf w), \Osh_{\Xo(\mathbf w)}\big)$ trivially cf. \textsection \ref{Gequiv}.
 \end{proof}

\begin{remark}
 The above proposition also follows from \cite[Theorem 2.3]{GK} after base change from $\Fq$ to $\Fpbar$.	
\end{remark}

\begin{corollary}\label{nonstandardGK}
Let $w\in F^+$ be an arbitrary Coxeter element. Then the cohomology of $\Xo(w)$ is as follows:
\[\arraycolsep=2pt\def\arraystretch{1.2}
H^k\left(\Xo(w), \Osh_{\Xo(w)}\right)=\left\{
\begin{array}{lc}
\Fpbar, & k=0\\
0, & k>0.
\end{array}
\right.
\]
In particular, $H^0\big(\Xo(w), \Osh_{\Xo( w)}\big)$ is the trivial $\GL_n(\Fq)$-representation. 
\end{corollary}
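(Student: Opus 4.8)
The plan is to transport $w$ to the standard Coxeter element $\mathbf w = s_1\cdots s_{n-1}$ through a finite chain of the operators $C$ and $K$, and then to invoke Proposition \ref{basecase}. The combinatorial input I would establish first is: if $w = s_{a_1}\cdots s_{a_{n-1}} \in F^+$ is a Coxeter element, meaning $\{a_1,\dots,a_{n-1}\} = \{1,\dots,n-1\}$, then $w$ can be transformed into $\mathbf w$ by finitely many applications of the operators $C$ (Definition \ref{operatorC}) and $K$ (Definition \ref{operatorsKR}). Observe that $K$ fixes the image $\alpha(w) \in W$ while $C$ replaces it by a conjugate, and that both operators send a word in which every generator $s_i$ occurs exactly once to another such word; hence every word occurring along the chain is again a Coxeter element and lies in $F^+ \subseteq \hat{F}^+$, so the propositions of Section \ref{steps} apply to all of them.

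To prove the combinatorial claim I would combine two facts. First, it is classical that any two reduced expressions of a fixed element of $W$ are connected by braid moves; for a Coxeter element no braid relation $sts = tst$ can ever be applied, since it involves a repeated generator, so all reduced expressions of $\alpha(w)$ are connected using only commutations of adjacent commuting generators, that is, by the operator $K$. Second, by Theorem \ref{gpcycl} (Geck--Pfeiffer) any two Coxeter elements of $W$ are conjugated by cyclic shifts. An elementary cyclic shift sends $v = xy$ to $yx$ with $\ell(v) = \ell(x) + \ell(y)$; it can be realized at the level of words by first using $K$ to bring the current word for $v$ into the concatenation of a reduced expression of $x$ with a reduced expression of $y$ --- possible exactly because the lengths are additive --- and then applying $C$ exactly $\ell(x)$ times, which cyclically rotates that word into the concatenation of a reduced expression of $y$ with a reduced expression of $x$, a word for $yx$. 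Combining the two facts, the set of words reachable from $w$ by $C$ and $K$ contains a reduced expression of every Coxeter element of $W$, in particular of $\mathbf w$.

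Having reached $\mathbf w$, I would apply Proposition \ref{cyclicshift} at every step of the chain using $C$ and Proposition \ref{grouprel}(i) at every step using $K$, obtaining isomorphisms of $\Fpbar$-vector spaces $H^k(\Xo(w),\Osh_{\Xo(w)}) \cong H^k(\Xo(\mathbf w),\Osh_{\Xo(\mathbf w)})$ for all $k \ge 0$. Proposition \ref{basecase} then gives the stated values, namely $\Fpbar$ in degree $0$ and $0$ in higher degrees. For the last assertion, $\Xo(w)$ carries a $\GL_n(\Fq)$-action, being $G^F$-stable by construction, and since $H^0(\Xo(w),\Osh_{\Xo(w)}) = \Fpbar$ is one-dimensional, Lemma \ref{trivGrep} immediately shows it is the trivial representation; note this does not require the isomorphisms above to be $\GL_n(\Fq)$-equivariant.

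The step I expect to be the main obstacle is the bookkeeping inside the combinatorial claim: one must verify that the cyclic-shift relation of Geck--Pfeiffer, which is a statement purely about $W$, genuinely lifts to a sequence of $C$ and $K$ moves on words in $F^+$, that the commutation moves needed to normalise reduced expressions are indeed available, and that the chain never leaves the set of Coxeter words. Once this is settled, everything else is a direct application of Propositions \ref{cyclicshift}, \ref{grouprel} and \ref{basecase} together with Lemma \ref{trivGrep}.
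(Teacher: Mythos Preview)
Your approach is essentially the paper's: transport $w$ to the standard Coxeter element $\mathbf w$ via the operators of Section~\ref{steps}, invoke Proposition~\ref{basecase}, and conclude the representation statement with Lemma~\ref{trivGrep}. You are in fact more thorough than the paper, which only cites Theorem~\ref{GP}(ii) and Proposition~\ref{cyclicshift} and asserts that iterating $C$ alone reaches $\mathbf w$, glossing over the commutation moves $K$ needed to realize the Geck--Pfeiffer cyclic shifts at the level of words in $F^+$.
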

\begin{proof}

	By \cite[Theorem 3.2.9]{GP}, for any two Coxeter elements $w_1, w_2\in W $, we have $w_1\to w_2$ and $w_2\to w_1$ cf. Definition \ref{cyclicshiftdfn2}. Thus we have $C^k(w')=\mathbf w$ for some integer $k\geq 0$. We may apply Proposition \ref{cyclicshift} to reduce to the case when $w$ is a standard Coxeter element. Use Proposition \ref{basecase} to get the result on the cohomology groups, and it follows that $H^0\big(\Xo(w), \Osh_{\Xo( w)}\big)$ is a trivial $\GL_n(\Fq)$-representation.
\end{proof}

\subsection{Cohomology of $\Xo_{L_I}(w)$ for $w\leq \mathbf w$ and $L_I\subseteq \GL_n$ a standard Levi subgroup}\label{levi}

\begin{lemma}\label{levidecomp}

	Let $w=s_{i_1}\cdots s_{i_{m}}\in F^+$ and $I=\supp (w)$.
	 Then there is an isomorphism of $\Fpbar$-schemes compatible with $L_I(\Fq)$-action:
	\[
	X_{L_I}(w)\overset{\sim}{\longrightarrow} X_{\GL_{n_1}}(w_1)\times \cdots \times  X_{\GL_{n_r}}(w_r), 
	\]
	where $w_a$ is an element in the free monoid associated to the Weyl group of $\GL_{n_a}$, $L_I\cong \prod_{a=1}^r\GL_{n_a}$ and $n=\sum_{a=1}^r n_a$.
		
	When the $s_{i_j}$'s do not repeat, $w_a$ is a Coxeter element in the Weyl group of $\GL_{n_a}$ for all $a$.  In particular, when $w\leq \mathbf{w}$, $w_1\cdots w_r=w$.
\end{lemma}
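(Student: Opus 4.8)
The plan is to unwind the definitions of the generalized Deligne--Lusztig variety for a standard Levi subgroup and to exploit the block structure of $L_I$. First I would recall that since $I = \supp(w)$, the simple reflections appearing in $w$ all lie in $I$, and $W_I$ decomposes as a direct product $W_{I_1} \times \cdots \times W_{I_r}$, where the $I_a$ are the connected components of the Dynkin diagram restricted to $I$; correspondingly $L_I \cong (\GL_{n_1} \times \cdots \times \GL_{n_r}) \times (\text{central torus})$ after reordering the coordinates, with $n_1 + \cdots + n_r = n$ (the blocks of sizes $n_a$ coming from the runs of consecutive indices among $\{i : s_i \in I\}$, plus singleton blocks for the omitted indices, but only the non-trivial blocks carry a non-trivial $w_a$). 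Since the word $w = s_{i_1}\cdots s_{i_m}$ only involves generators from the various $I_a$, and generators from distinct components commute and act on disjoint sets of coordinates, the relative-position conditions defining $X_{L_I}(w)$ split as a product of the conditions defining $X_{\GL_{n_a}}(w_a)$, where $w_a$ is the subword of $w$ consisting of those $s_{i_j}$ with $i_j \in I_a$ (viewed as an element of the free monoid on the simple reflections of $\GL_{n_a}$).

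The key steps, in order: (1) identify the flag variety $X_{L_I} = L_I/B^*_{L_I}$ with the product $\prod_a X_{\GL_{n_a}}$, compatibly with the Frobenius $F$ (which acts on each factor as the Frobenius of that $\GL_{n_a}$, since $F$ fixes $S$ pointwise in our setup), and compatibly with the $L_I(\Fq) = \prod_a \GL_{n_a}(\Fq)$-action; (2) observe that for $s \in I_a$, the orbit $O_{L_I}(s)$ inside $X_{L_I} \times X_{L_I}$ corresponds under this identification to $\big(\prod_{b < a} \Delta_{X_{\GL_{n_b}}}\big) \times O_{\GL_{n_a}}(s) \times \big(\prod_{b > a} \Delta_{X_{\GL_{n_b}}}\big)$ — the diagonal on all other factors; (3) chain these together using the fibre-product description $O(t_1)\times_X \cdots \times_X O(t_k) \cong O(t_1,\ldots,t_k)$ (Remark~\ref{isoofgeneralizeddlvar}) to see that imposing the successive conditions $(B_{j-1},B_j) \in O(s_{i_j})$ for all $j$, together with $B_m = FB_0$, factors across the $r$ blocks; this yields the desired isomorphism of $\Fpbar$-schemes, and it is $L_I(\Fq)$-equivariant by construction. (4) For the final sentences: when the $s_{i_j}$ are pairwise distinct, each $w_a$ uses each simple reflection of $\GL_{n_a}$ at most once, and since $I_a$ is a connected subdiagram with $w_a$ supported on all of it, $w_a$ is a Coxeter element of $W_{\GL_{n_a}}$. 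When moreover $w \le \mathbf{w}$, the reduced expression of $\mathbf w = s_1\cdots s_{n-1}$ shows that a subword $w$ is automatically a reduced expression (no repeats, indices in increasing order), so $\alpha(w) = w$ in $W$ and likewise $w_1 \cdots w_r = w$ as an identity of elements obtained by concatenating the block subwords in order.

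The main obstacle I expect is being careful about what ``$w_a$'' means as an element of a free monoid rather than of the Weyl group, and checking that the decomposition $X_{L_I}(w) \cong \prod_a X_{\GL_{n_a}}(w_a)$ is genuinely an isomorphism of schemes and not merely a bijection on points — this is where one needs the scheme-theoretic fibre-product identities for the orbit closures (resp. orbits) rather than just the set-theoretic description, and where one must track that the embedding of $X_{L_I}$ into $(X_{L_I})^{m+1}$ used to define the generalized Deligne--Lusztig variety is compatible with the product decomposition of each factor. A secondary subtlety is the bookkeeping of block sizes: the indices of $S = \{s_1,\ldots,s_{n-1}\}$ not in $I$ cut $\{1,\ldots,n\}$ into intervals, and one should note that an omitted $s_i$ contributes a trivial factor $\GL_1$ (or merges into the torus) so that only the blocks $\GL_{n_a}$ with $n_a \ge 2$ contribute non-trivially to $w$; I would phrase the statement so that $w_a$ is allowed to be the empty word (equivalently, $X_{\GL_{n_a}}(w_a)$ a point) on such blocks, or simply restrict the product to the non-trivial blocks. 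Once the equivariant isomorphism of flag varieties and the block structure of the orbits are in hand, the rest is a direct induction on the number $m$ of letters in $w$ using the fibre-product description.
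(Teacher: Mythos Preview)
Your proposal is correct and follows essentially the same approach as the paper: both arguments rest on the block decomposition $L_I \cong \GL_{n_1}\times\cdots\times\GL_{n_r}$, the induced product decomposition of the flag variety and of the Bruhat orbits, and the observation that the Frobenius respects this product, after which one simply unwinds the definition of the (generalized) Deligne--Lusztig variety. Your write-up is in fact more explicit than the paper's about the scheme-theoretic fibre-product identities and the bookkeeping of trivial $\GL_1$ blocks (the paper relegates the latter to a brief remark), but there is no genuine difference in strategy.
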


\begin{proof}
For $a=1,\ldots,r$, denote the Weyl group of $\GL_{n_a}$ by $W_a$. The intersection $L_I\cap B^*$ is a Borel subgroup of $L_I$, and it is a product $B_1\times\cdots \times B_r$, where $B_a$ is a Borel subgroup of $\GL_{n_a}$. Then we have the  homogeneous spaces $X_a:=\GL_{n_a}/B_a$, and the orbit of $v\in W_a$ in $X_a\times X_a$ is $O_a(v)$. 

Note that the Weyl group $W_I$ of $L_I$ is isomorphic to the product of symmetric groups $S_{n_1}\times \cdots \times S_{n_r}$. Hence the Bruhat decomposition of $L_I$ is compatible with the Bruhat decomposition of each  $\GL_{n_a}$, and thus the orbit $O_{L_I}(w)$ is the product $O_1(w_1)\times \cdots \times O_r(w_r)$ over $\normalfont\text{Spec } \Fpbar$. Thus by construction, when $s_{i_j}$ are all distinct, they will each show up exactly once in $w_a$ for exactly one $a$. When $w\leq \mathbf{w}$, we have $w_1\cdots w_r=w$. 

The restriction of the Frobenius endomorphism from $\GL_n$ to $L_I$ respects the product as well. To finish the proof, it suffices to go through the definitions of Deligne--Lusztig varieties with respect to products over  $\normalfont\text{Spec } \Fpbar$. 
\end{proof}
The same applies to the corresponding smooth compactifications with respect to the expression $w=s_{i_1}\cdots s_{i_{m}}$.

\begin{lemma} \label{levidecomp2}
	Using notations as in Lemma \ref{levidecomp}, we have an isomorphism of $\Fpbar$-schemes equivariant under $L_I(\Fq)$-action. 
	
	\[
	\Xo_{L_I}(w)\overset{\sim}{\longrightarrow} \Xo_{\GL_{n_1}}(w_1)\times \cdots \times  \Xo_{\GL_{n_r}}(w_r)
	\]
\end{lemma}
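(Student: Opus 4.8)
The plan is to carry the product decomposition established in Lemma~\ref{levidecomp} through the closure operations of Definition~\ref{XoF+} that define $\Xo_{L_I}(w)$. I would first recall from the proof of Lemma~\ref{levidecomp} that over $\Spec\Fpbar$ one has $X_{L_I}=X_1\times\cdots\times X_r$ with $X_a=\GL_{n_a}/B_a$, that $W_I\cong S_{n_1}\times\cdots\times S_{n_r}$, so each generator $s_{i_j}$ in the chosen expression $w=s_{i_1}\cdots s_{i_m}$ lies in exactly one factor $W_{a_j}$ and the subwords assemble to the $w_a$, and that the Frobenius $F$ on $X_{L_I}$ is the product of the Frobenii $F_a$ on the $X_a$; in particular, under $X_{L_I}\times X_{L_I}\cong\prod_{a=1}^r(X_a\times X_a)$ the graph $\Gamma_F$ decomposes as $\prod_{a=1}^r\Gamma_{F_a}$.

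First I would decompose each orbit closure appearing in the defining condition. If $s=s_{i_j}$ lies in $W_a$, then $O_{L_I}(s)$ moves points only in the $a$-th factor, so under $X_{L_I}\times X_{L_I}\cong\prod_b(X_b\times X_b)$ we get $O_{L_I}(s)\cong\bigl(\prod_{b\neq a}\Delta_{X_b}\bigr)\times O_a(s)$, where $\Delta_{X_b}=O_b(e)$. Since $\overline{O(s)}=O(s)\,{\dot\cup}\,O(e)$ and Zariski closure commutes with finite products over the field $\Fpbar$ (the diagonals being already closed), this gives an isomorphism
\[
\overline{O_{L_I}(s)}\;\cong\;\Bigl(\prod_{b\neq a}\Delta_{X_b}\Bigr)\times\overline{O_a(s)}
\]
compatible with the first- and last-coordinate projections to $X_{L_I}\times X_{L_I}$.

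Next I would assemble $\Xo_{L_I}(w)$. Directly from Definition~\ref{XoF+} (cf.\ Remark~\ref{isoofgeneralizeddlvar}), $\overline{O}_{L_I}(s_{i_1},\ldots,s_{i_m})$ is the iterated fibre product $\overline{O_{L_I}(s_{i_1})}\times_{X_{L_I}}\cdots\times_{X_{L_I}}\overline{O_{L_I}(s_{i_m})}$ along these projections, and likewise $\overline{O}_{\GL_{n_a}}(w_a)$ is the iterated fibre product of the corresponding $\overline{O_a(s)}$ over $X_a$. Using the canonical isomorphism $\bigl(\prod_a A_a\bigr)\times_{\prod_a X_a}\bigl(\prod_a B_a\bigr)\cong\prod_a(A_a\times_{X_a}B_a)$ repeatedly, the previous display, and the fact that a fibre product against a diagonal $\Delta_{X_b}$ merely re-identifies the two copies of $X_b$, one obtains
\[
\overline{O}_{L_I}(s_{i_1},\ldots,s_{i_m})\;\cong\;\prod_{a=1}^r\overline{O}_{\GL_{n_a}}(w_a)
\]
over $\Spec\Fpbar$, compatibly with the projections to $\prod_a(X_a\times X_a)$. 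Forming the fibre product with $\Gamma_F=\prod_a\Gamma_{F_a}$ then yields $\Xo_{L_I}(w)\cong\prod_{a=1}^r\Xo_{\GL_{n_a}}(w_a)$. Since $L_I(\Fq)=\GL_{n_1}(\Fq)\times\cdots\times\GL_{n_r}(\Fq)$ acts by componentwise conjugation, all the isomorphisms above are $L_I(\Fq)$-equivariant, as claimed.

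The step I expect to be the main obstacle is the combinatorial bookkeeping in the assembly: when the iterated fibre product over $\prod_a X_a$ is reorganized into a product over $a$ of iterated fibre products over $X_a$, one must check that the diagonals $\Delta_{X_b}$ contributed by the indices $j$ with $a_j\neq b$ collapse correctly, so that the $a$-th factor is exactly $\overline{O}_{\GL_{n_a}}(w_a)$ with no stray copies of the other $X_b$ surviving. This is purely formal, and it is perhaps cleanest to verify on functors of points: a point of $\Xo_{L_I}(w)$ is a tuple $(B_0,\ldots,B_m)$ with $B_j=(B_j^{(1)},\ldots,B_j^{(r)})$, the condition $(B_{j-1},B_j)\in\overline{O(s_{i_j})}$ forces $B_{j-1}^{(b)}=B_j^{(b)}$ for all $b\neq a_j$ together with $(B_{j-1}^{(a_j)},B_j^{(a_j)})\in\overline{O_{a_j}(s_{i_j})}$, and $B_m=FB_0$ becomes $B_m^{(a)}=F_aB_0^{(a)}$ for each $a$; discarding the redundant coordinates leaves, for each $a$, precisely the data of a point of $\Xo_{\GL_{n_a}}(w_a)$, and the same reasoning gives the $L_I(\Fq)$-equivariance on points.
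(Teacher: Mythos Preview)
Your proof is correct and takes essentially the same approach as the paper, which simply states that it suffices to check the definitions of smooth compactifications of Deligne--Lusztig varieties with respect to products of reductive groups. You have written out in full detail precisely this verification---the product decomposition of the orbit closures, their iterated fibre products, the Frobenius graph, and the $L_I(\Fq)$-equivariance---which the paper leaves as a one-line remark.
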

\begin{proof}
	It suffices check the definitions of smooth compactifications for Deligne--Lusztig varieties with respect to the product of reductive groups. 
\end{proof}

\begin{remark}
	 If $\supp(w)$ is a proper subset of $S$, then sometimes we could have $\GL_{n_a}=\GL_1$ for some $a$, thus $\Xo_{\GL_{n_a}}(e)$ is the point corresponding to the only Borel subgroup.
\end{remark}

\begin{proposition} \label{levicoh}
 Let $w\leq \mathbf{w}$ and $I=\supp(w)$. Then 
\[
H^k\left(\Xo_{L_I}(w),\mathcal O_{\Xo_{L_I}(w)}\right)=0,
\] for $k>0$,  and 
\[
H^0\left(\Xo_{L_I}(w),\mathcal O_{\Xo_{L_I}(w)}\right)=\Fpbar
\]
is the trivial $L_I(\Fq)$-representation.
\end{proposition}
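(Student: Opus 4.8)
The plan is to reduce the computation to a product of base cases and then assemble the answer by the Künneth formula. First I would apply Lemma~\ref{levidecomp2}: since $w\leq\mathbf w$, the simple reflections in a reduced expression $w=s_{i_1}\cdots s_{i_m}$ are pairwise distinct, so the lemma furnishes an $L_I(\Fq)$-equivariant isomorphism of $\Fpbar$-schemes
\[
\Xo_{L_I}(w)\;\overset{\sim}{\longrightarrow}\;\Xo_{\GL_{n_1}}(w_1)\times\cdots\times\Xo_{\GL_{n_r}}(w_r),
\]
where each $w_a$ is a Coxeter element of the Weyl group of $\GL_{n_a}$ and $n_1+\cdots+n_r=n$; here $L_I(\Fq)$ acts on the right-hand side through its projections to the $\GL_{n_a}(\Fq)$ coming from the Levi decomposition of $P_I$. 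When $\supp(w)\subsetneq S$ some of the $\GL_{n_a}$ may be $\GL_1$, in which case $w_a=e$ and $\Xo_{\GL_1}(e)$ is a single reduced point; this is the (trivial) instance of the base case and is covered by the statement for an arbitrary Coxeter element.

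Next I would feed each factor into the base case Corollary~\ref{nonstandardGK}, which gives, for every $a$,
\[
H^k\bigl(\Xo_{\GL_{n_a}}(w_a),\Osh_{\Xo_{\GL_{n_a}}(w_a)}\bigr)=
\begin{cases}\Fpbar,& k=0,\\ 0,& k>0,\end{cases}
\]
and in degree $0$ the trivial $\GL_{n_a}(\Fq)$-representation. Each $\Xo_{\GL_{n_a}}(w_a)$ is projective over $\Fpbar$, so all these groups are finite-dimensional $\Fpbar$-vector spaces and the Künneth formula for the structure sheaf of a product of proper schemes over a field applies, yielding
\[
H^k\bigl(\Xo_{L_I}(w),\Osh_{\Xo_{L_I}(w)}\bigr)\;\cong\;\bigoplus_{k_1+\cdots+k_r=k}\;\bigotimes_{a=1}^{r}H^{k_a}\bigl(\Xo_{\GL_{n_a}}(w_a),\Osh_{\Xo_{\GL_{n_a}}(w_a)}\bigr).
\]
Since each factor is one-dimensional in degree $0$ and vanishes in positive degrees, the right-hand side is $\Fpbar$ for $k=0$ and $0$ for $k>0$, which is the asserted cohomology.

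For the $\GL$-action, I would note that $L_I$ is a connected reductive group defined over $\Fq$ with $L_I^F=L_I(\Fq)$, and $\Osh_{\Xo_{L_I}(w)}$ is an $L_I(\Fq)$-equivariant sheaf on $\Xo_{L_I}(w)$. Having just shown $H^0(\Xo_{L_I}(w),\Osh_{\Xo_{L_I}(w)})=\Fpbar$, Lemma~\ref{trivGrep} immediately gives that this is the trivial $L_I(\Fq)$-representation; alternatively, the $L_I(\Fq)$-equivariance of the product decomposition together with the triviality of each $\GL_{n_a}(\Fq)$-factor in Corollary~\ref{nonstandardGK} yields the same conclusion.

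I do not anticipate a serious obstacle: the only points needing a little care are checking that the hypotheses of the Künneth formula are met (properness over the field $\Fpbar$, hence coherence and finite-dimensionality of all the cohomology groups in play) and keeping track of which copy of $\Fq$-points acts on which factor, so that the equivariance in Lemma~\ref{levidecomp2} is applied consistently.
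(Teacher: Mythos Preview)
Your proposal is correct and follows essentially the same approach as the paper: decompose via Lemma~\ref{levidecomp2}, apply K\"unneth for the structure sheaf on a product of proper $\Fpbar$-schemes, feed in the base case for each $\GL_{n_a}$-factor, and conclude the representation is trivial via Lemma~\ref{trivGrep}. The only cosmetic differences are that the paper phrases K\"unneth inductively (two factors at a time) and cites Proposition~\ref{basecase} rather than Corollary~\ref{nonstandardGK}; since for $w\leq\mathbf w$ each $w_a$ is in fact the \emph{standard} Coxeter element of its $\GL_{n_a}$, either reference works.
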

\begin{proof}

	By Lemma \ref{levidecomp2}, we may compute the cohomology for $\Xo_{\GL_{n_1}}(w_1)\times \cdots \times  \Xo_{\GL_{n_r}}(w_r)$, with notations as before. To simplify the notation, set 
				\[
	V_j:= \Xo_{\GL_{n_1}}(w_1)\times \cdots \times  \Xo_{\GL_{n_j}}(w_j).
	\]
	By applying induction on the K\"{u}nneth formula for coherent sheaves \cite[Theorem  6.7.8]{EGA32}, we have
	\[
	H^k\left( V_j, \mathcal O_{V_j}\right)=\bigoplus_{p+q=k} H^p\left( V_{j-1},\mathcal O_{V_{j-1}}\right)\otimes_{\Fpbar} H^q\left(\Xo_{\GL_{n_j}}(w_j), \mathcal O_{\Xo_{\GL_{n_j}}(w_j)}\right).
	\]
	By Proposition \ref{basecase}, we know that for all $j=1,\ldots,r$, 
	\[\arraycolsep=2pt\def\arraystretch{1.2}
	H^k\left(\Xo_{\GL_{n_j}}(w_j), \mathcal O_{\Xo_{\GL_{n_j}}(w_j)}\right)=\left\{
	\begin{array}{lc}
\Fpbar, & k=0\\
0, & k>0.
\end{array}
\right.
	\]
Thus, 
\[
H^k\left( V_j, \mathcal O_{V_j}\right)= H^k\left( V_{j-1}, \mathcal O_{V_{j-1}}\right)\otimes_{\Fpbar} \Fpbar.
\]
Hence by induction on $j$ we know that for any $j$, 
\[\arraycolsep=2pt\def\arraystretch{1.2}
H^k\left( V_j, \mathcal O_{V_j}\right)=\left\{
\begin{array}{lc}
\Fpbar, & k=0\\
0, & k>0.
\end{array}
\right.
	\]
	The case $j=r$ yields the desired result. It follows from 
		\[
H^0\left(\Xo_{L_I}(w),\mathcal O_{\Xo_{L_I}(w)}\right)=\Fpbar, 
\]
that $H^0\big(\Xo_{L_I}(w),\mathcal O_{\Xo_{L_I}(w)}\big)$ is the trivial $L_I(\Fq)$-representation cf. \textsection \ref{Gequiv}.
\end{proof}
\begin{corollary}\label{levicoh2}
	Let $w=s_{i_1}\cdots s_{i_{m}}\in F^+$ and $I=\normalfont\text{supp}(w)$, such that the $s_{i_j}$'s are all distinct.  Then  
		\[\arraycolsep=2pt\def\arraystretch{1.2}
	H^k\left(\Xo_{L_I}(w),\mathcal O_{\Xo_{L_I}(w)}\right)=\left\{
	\begin{array}{lc}
\Fpbar, & k=0\\
0, & k>0.
\end{array}
\right.
	\]
Furthermore, $H^0\left(\Xo_{L_I}(w),\mathcal O_{\Xo_{L_I}(w)}\right)$	is the trivial $L_I(\Fq)$-representation.  \end{corollary}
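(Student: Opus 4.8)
The plan is to bootstrap from the product decomposition of Lemma~\ref{levidecomp2} together with the computation for Coxeter elements in Corollary~\ref{nonstandardGK}, following the same Künneth argument already used in the proof of Proposition~\ref{levicoh}. The point is that the present statement is essentially Proposition~\ref{levicoh} with the hypothesis $w\leq\mathbf{w}$ weakened to mere distinctness of the $s_{i_j}$; dropping the ordering constraint forces the Coxeter factors appearing below to be possibly non-standard, which is why one invokes Corollary~\ref{nonstandardGK} rather than Proposition~\ref{basecase}.

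First I would apply Lemma~\ref{levidecomp2} to the expression $w=s_{i_1}\cdots s_{i_m}$. Since the $s_{i_j}$ are pairwise distinct, we obtain an $L_I(\Fq)$-equivariant isomorphism of $\Fpbar$-schemes
\[
\Xo_{L_I}(w)\overset{\sim}{\longrightarrow}\Xo_{\GL_{n_1}}(w_1)\times\cdots\times\Xo_{\GL_{n_r}}(w_r),\qquad n_1+\cdots+n_r=n,
\]
and, by the last assertion of Lemma~\ref{levidecomp}, each $w_a$ is a Coxeter element of the Weyl group of $\GL_{n_a}$ (not necessarily the standard one, since we do not assume the $s_{i_j}$ occur in increasing order). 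Corollary~\ref{nonstandardGK}, applied to $\GL_{n_a}$, therefore gives $H^0\bigl(\Xo_{\GL_{n_a}}(w_a),\Osh_{\Xo_{\GL_{n_a}}(w_a)}\bigr)=\Fpbar$ and $H^k\bigl(\Xo_{\GL_{n_a}}(w_a),\Osh_{\Xo_{\GL_{n_a}}(w_a)}\bigr)=0$ for $k>0$.

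Next I would run the induction on the number of factors exactly as in Proposition~\ref{levicoh}: writing $V_j:=\Xo_{\GL_{n_1}}(w_1)\times\cdots\times\Xo_{\GL_{n_j}}(w_j)$ and applying the Künneth formula for coherent sheaves \cite[Theorem~6.7.8]{EGA32} one factor at a time, the vanishing of the positive-degree cohomology of the newly adjoined factor collapses the Künneth sum to $H^k(V_j,\Osh_{V_j})\cong H^k(V_{j-1},\Osh_{V_{j-1}})\otimes_{\Fpbar}\Fpbar$, and induction on $j$ yields $H^0(\Xo_{L_I}(w),\Osh_{\Xo_{L_I}(w)})=\Fpbar$ with all higher cohomology zero. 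Finally, since $\Xo_{L_I}(w)$ carries an $L_I(\Fq)$-action and its ring of global functions is $\Fpbar$, Lemma~\ref{trivGrep} identifies $H^0$ with the trivial $L_I(\Fq)$-representation.

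I do not expect a genuine obstacle here: the argument is a routine recombination of results already in place. The one point worth double-checking is that the distinctness hypothesis, which is weaker than $w\leq\mathbf{w}$, still guarantees that each factor $w_a$ is an honest Coxeter element of the smaller Weyl group, rather than merely a word with distinct but possibly ``out of order'' letters — and this is precisely what the last line of Lemma~\ref{levidecomp} records, so that Corollary~\ref{nonstandardGK} is applicable to each factor.
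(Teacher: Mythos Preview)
Your proposal is correct and matches the paper's own approach exactly: the paper simply states that the proof is analogous to that of Proposition~\ref{levicoh} with Corollary~\ref{nonstandardGK} in place of Proposition~\ref{basecase}, which is precisely the substitution you identify and justify.
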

\begin{proof}
	This is analogous to the proof of Proposition \ref{levicoh} and uses Corollary \ref{nonstandardGK}. 
\end{proof}

We conclude this section by recalling the following lemma.

\begin{lemma}
Let $w=s_{i_1}\cdots s_{i_{m}}\in F^+$ and $I=\supp (w)$.  Then the irreducible components of $X(w)$ (resp. $\Xo(w)$) are $\vert G(\mathbb F_q)/P_I(\mathbb F_q)\vert $ isomorphic copies of $X_{L_I}(s_{i_1},\ldots,s_{i_{m}})$ (resp. $\Xo_{L_I}(s_{i_1},\ldots,s_{i_{m}})$).
\end{lemma}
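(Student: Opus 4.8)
The plan is to realize $\Xo(w)$ (resp.\ $X(w)$) as the fibration $G^F/U_I^F\times^{L_I^F}\Xo_{L_I}(s_{i_1},\ldots,s_{i_m})$ (resp.\ $G^F/U_I^F\times^{L_I^F}X_{L_I}(s_{i_1},\ldots,s_{i_m})$) of \eqref{disjointunionlevidl}, and then to read off the irreducible components from the projection $\pi_I$ onto the finite set $G^F/P_I^F$. I would run the argument for $\Xo(w)$; the case of $X(w)$ is identical with $\overline{O(s_{i_j})}$ replaced by $O(s_{i_j})$. First I would build a morphism $q\colon\Xo(w)\to G^F/P_I^F$. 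Given $(B_0,\ldots,B_m)\in\Xo(w)$, every consecutive pair lies in $\overline{O(s_{i_j})}$ with $s_{i_j}\in I$, so the relative position of $B_0$ and $B_m$ lies in $W_I$; hence the unique type-$I$ parabolic $Q$ containing $B_0$ also contains $B_m$. Since $FB_0=B_m$ and $F$ carries the type-$I$ parabolic over a Borel $B$ to the one over $FB$, this forces $FQ=Q$. Thus $(B_0,\ldots,B_m)\mapsto B_0\mapsto Q$ defines a morphism $\Xo(w)\to G/P_I$ whose image consists of $F$-fixed points; as $\Xo(w)$ is reduced and the set of $F$-fixed points of $G/P_I$ is the finite reduced scheme $G^F/P_I^F$ (using that $P_I$ is connected, so $(G/P_I)^F=G^F/P_I^F$ by Lang's theorem), we get a $G^F$-equivariant morphism $q\colon\Xo(w)\to G^F/P_I^F$.

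Next I would identify the fibre of $q$ over the base point $[P_I]$ and globalize. If $B_0\subseteq P_I$, then by the same relative-position argument every $B_j\subseteq P_I$; since $U_I$ lies in every Borel subgroup of $P_I$, each $B_j/U_I$ is a Borel of $L_I=P_I/U_I$, relative positions in $G$ among the $B_j$ coincide with those in $L_I$ among the $B_j/U_I$, and $FB_0=B_m$ becomes $F(B_0/U_I)=B_m/U_I$ because $P_I$ and $U_I$ are $F$-stable; conversely a tuple of Borels of $L_I$ lifts uniquely to such a tuple in $P_I$. Hence $q^{-1}([P_I])\cong\Xo_{L_I}(s_{i_1},\ldots,s_{i_m})$, on which $P_I^F=U_I^F\rtimes L_I^F$ acts through $L_I^F$. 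As $G^F$ acts transitively on $G^F/P_I^F$ with stabiliser $P_I^F$ at $[P_I]$, translating the fibre gives, exactly as in the construction recalled from \cite[\S I.5.14]{Jantzen}, a $G^F$-equivariant isomorphism
\[
\Xo(w)\ \overset{\sim}{\longrightarrow}\ G^F\times^{P_I^F}\Xo_{L_I}(s_{i_1},\ldots,s_{i_m})\ \cong\ G^F/U_I^F\times^{L_I^F}\Xo_{L_I}(s_{i_1},\ldots,s_{i_m}),
\]
which identifies $q$ with $\pi_I$; alternatively one writes the inverse directly, sending $\big(xU_I^F,(\bar B_0,\ldots,\bar B_m)\big)$ to the conjugate by a representative $x\in G^F$ of the tuple of preimages in $P_I$ of the $\bar B_j$, the $L_I^F$-invariance being checked via the splitting $P_I=U_I\rtimes L_I$.

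Finally I would count. The base $G^F/P_I^F$ is a disjoint union of $|G(\Fq)/P_I(\Fq)|$ copies of $\Spec\Fpbar$, so by \eqref{disjointunionlevidl} the total space is a disjoint union of that many copies of $\Xo_{L_I}(s_{i_1},\ldots,s_{i_m})$ (resp.\ $X_{L_I}(s_{i_1},\ldots,s_{i_m})$). Since $\supp(w)=I$ is the full set of simple reflections of $W_I$, the irreducibility criterion recalled in Section \ref{dfn}, applied to the reductive group $L_I$, shows that $X_{L_I}(s_{i_1},\ldots,s_{i_m})$ is irreducible and has the same number of irreducible components as $\Xo_{L_I}(s_{i_1},\ldots,s_{i_m})$, namely one; hence these $|G(\Fq)/P_I(\Fq)|$ copies are precisely the irreducible components of $X(w)$ (resp.\ $\Xo(w)$).

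I expect the main obstacle to be the middle step: upgrading the bijection on points to an isomorphism of $\Fpbar$-schemes that is compatible with the $G^F$-action and the Frobenius datum — in particular, verifying that the fibre of $q$ is $\Xo_{L_I}(s_{i_1},\ldots,s_{i_m})$ scheme-theoretically (using $U_I\subseteq B$ for every Borel $B\subseteq P_I$ and that relative positions restrict correctly to $L_I$), and that the twist $G^F\times^{P_I^F}(-)$ is formed exactly as in \cite{Jantzen}. Once the identification $\Xo(w)\cong G^F/U_I^F\times^{L_I^F}\Xo_{L_I}(\ldots)$ is in place, the remaining steps are formal.
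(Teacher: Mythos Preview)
Your proposal is correct and follows essentially the same route as the paper: the paper simply invokes \cite[Proposition~2.3.8 and \S2.3.4]{DMR} for the $G^F$-equivariant isomorphism $G^F/U_I^F\times^{L_I^F}\Xo_{L_I}(s_{i_1},\ldots,s_{i_m})\overset{\sim}{\to}\Xo(w)$ (and likewise for $X(w)$), whereas you unpack that isomorphism explicitly via the map $q$ to $G^F/P_I^F$ and the fibre identification. Both proofs then conclude by the same fibration $\pi_I$ and the same irreducibility criterion for $X_{L_I}(w)$ (the paper citing \cite{BR}, \cite{Goertz09} directly); your concern about the scheme-theoretic upgrade is precisely what the cited result in \cite{DMR} supplies.
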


\begin{remark}
We see from this lemma that for any $w\in F^+$, after fixing a reduced expression, the number of irreducible components of $X(w)$ (resp. $\Xo(w)$) depends only on $\supp(w)$. 
\end{remark}

\section{The main Theorem}\label{theorem1}
\subsection{Cohomology of the structure sheaf on $\Xo(w)$}

We now prove our main theorem.

\begin{proof}[Proof of Theorem \ref{main}]
After fixing a support of $w$, we may apply the induction steps. First, suppose that $\supp (w)=S$. 
If $w=s_{i_1}\cdots s_{i_m}\in F^+$ and   the $s_{i_j}$'s are not all distinct, then we apply Proposition \ref{cyclicshift} and Proposition \ref{grouprel} to transform $w$ into the shape $sw's$ with $s\in S$. We use Proposition \ref{p1coh} to reduce its length. After repeating this procedure of changing the presentation of $w$ and reducing the length finitely many times, we have for all $k\geq 0$, an isomorphism of $\Fpbar$-vector spaces:
	\[
	H^k\left(\Xo(w), \Osh_{\Xo(w)}\right)\overset{\sim}{\longrightarrow}H^k\left(\Xo(v), \Osh_{\Xo(v)}\right),
	\]
	where $v\in F^+$ corresponds to a Coxeter element and $\normalfont\text{supp}(v)=\text{supp}(w)$. By Corollary \ref{nonstandardGK}, we know that for $k>0$, $H^k\big(\Xo(w), \Osh_{\Xo(w)}\big)$ vanish, and
	\[
	H^0\left(\Xo(w), \Osh_{\Xo(w)}\right)=\Fpbar.
	\]
	Thus we know that $H^0\big(\Xo(w), \Osh_{\Xo(w)}\big)$ is the trivial  $\GL_n(\Fq)$-representation cf. \textsection \ref{Gequiv}.

Next, let $I:=\supp(w)$ such that  $I\subsetneq S$.
If $w=s_{i_1}\cdots s_{i_m}\in F^+$ and the $s_{i_j}$'s are not all distinct, then we apply
Proposition \ref{cyclicshift} and Proposition \ref{grouprel} to transform $w$ into the shape $sw's$ with $s\in S$. We use Proposition \ref{p1coh} to reduce its length. After repeating this procedure finitely many times, we have for all $k\geq 0$, an isomorphism of $\Fpbar$-vector spaces:
	\[
	H^k\left(\Xo(w), \Osh_{\Xo(w)}\right)\overset{\sim}{\longrightarrow}H^k\left(\Xo(v), \Osh_{\Xo(v)}\right),
	\]
	where $v\in F^+$ corresponds to a Coxeter element in $W_I$ and $\normalfont\text{supp}(v)=\text{supp}(w)$. 
	By \cite[Proposition 2.3.8]{DMR}, we have an $\GL_n(\Fq)$-equivariant isomorphism of $\Fpbar$-schemes: 
	\[
	\Xo(v)\overset{\sim}{\longrightarrow}  \GL_n^F/U_I^F\times^{L_I^F}\Xo_{L_I}(v).
	\]
	Precomposing this with the morphism 
	\begin{equation*}
\arraycolsep=1pt\def\arraystretch{1.5}
		\begin{array}{rcl}
		\pi_I: \GL_n^F/U_I^F\times^{L_I^F}\Xo_{L_I}(v) & \longrightarrow & \GL_n^F/P_I^F\\
		\left(xU_I^F, \left(B_0,\ldots,B_{m'}\right)\right) &\longmapsto &  \pi(xU_I^F),
\end{array}
\end{equation*}
where $\pi: \GL_n^F/U_I^F\to \GL_n^F/P_I^F$ is the natural projection map, 
	gives a $\GL_n(\Fq)$-equivariant morphism $\Xo(v)\to \GL_n^F/P_I^F$ 
such that $\GL_n(\Fq)$ acts on the set of fibres transitively and the stablizer of each fibre corresponds to a conjugate of $P_I(\Fq)$ in $\GL_n(\Fq)$. 	It follows that 
	\[
	H^k\left(\Xo(v), \Osh_{\Xo(v)}\right)= \Ind_{P_I(\Fq)}^{\GL_n(\Fq)} H^k\left(\Xo_{L_I}(v), \Osh_{\Xo_{L_I}(v)}\right),
	\]
	for all $k\geq 0$. 
	Thus Corollary \ref{levicoh2} implies the vanishing of $H^k\left(\Xo(w), \Osh_{\Xo(w)}\right) $ for $k>0$.
	
	Finally, we analyze the global sections of $\Osh_{\Xo(w)}$ as a $\GL_n(\Fq)$-representation.  By Lemma \ref{levidecomp2}, we know that 
	\[\Xo_{L_I}(w)\overset{\sim}{\longrightarrow}\Xo_{\GL_{n_1}}(w_1)\times \cdots \times  \Xo_{\GL_{n_r}}(w_r),
	\]
	is an isomorphism of $\Fpbar$-schemes equivariant under $L_I(\Fq)$-action, where $n_1+\cdots +n_r=n$ and $w_a$ is an element in the Weyl group of $\GL_{n_a}$, $a=1,\ldots,r$. This gives an isomorphism of $\GL_n(\Fq)$-modules:	
	\begin{multline*}
		H^0\left(\Xo_{L_I}(w), \Osh_{\Xo_{L_I}(w)}\right)\overset{\sim}{\longrightarrow} \\
		H^0\left( \Xo_{\GL_{n_1}(w_1)}, \mathcal O_{\Xo_{\GL_{n_1}(w_1)}}\right)\otimes\cdots\otimes H^0\left( \Xo_{\GL_{n_r}}(w_r), \mathcal O_{\Xo_{\GL_{n_r}}(w_r)}\right).
	\end{multline*}
		Since $w$ has full support in $W_I$, we know that $w_a$ has full support in $W_a$ for all $a=1,\ldots,r$, where $W_a$ is the Weyl group of $\GL_{n_a}$. It follows from Corollary \ref{nonstandardGK} that
		\[
		H^0\left( \Xo_{\GL_{n_a}}(w_a), \mathcal O_{\Xo_{\GL_{n_a}}(w_a)}\right)=\Fpbar
		\]
	is the trivial representation for $\GL_{n_a}(\Fq)$, so we know that  
	\[
	H^0\left(\Xo_{L_I}(w), \Osh_{\Xo_{L_I}(w)}\right)=\Fpbar
	\]
	gives the trivial $L_I(\Fq)$-representation. Therefore
	\[
	H^0\left(\Xo(w), \Osh_{\Xo(w)}\right)=\Ind_{P_I(\Fq)}^{\GL_n(\Fq)} \mathds{1}_{\Fpbar},
	\]
	where $\mathds{1}_{\Fpbar}$ is the trivial $P_I(\Fq)$-representation with coefficients in $\Fpbar$.
	\end{proof}

	\begin{remark}
			For $w\in W$, if we fix a reduced expression $w=s_{i_1}\cdots s_{i_m}$ with $s_{i_j}\in S$, then $s_{i_1}\cdots s_{i_m}$ can be considered as an element of $F^+$. Theorem \ref{main} thus applies to $\Xo(w)$ via the $\GL_n(\Fq)$-equivariant isomorphism $\Xo(s_{i_1},\ldots, s_{i_m})\cong \Xo(w)$. 
	\end{remark}

\subsection{The ${\rm mod}\ p^m$ and $\mathbb Z_p$ \'{e}tale cohomology of $\Xo(w)$}
For the rest of this paper, if the finite group $H$ is clear from the context and $R$ is a ring, we fix the notation $\mathds{1}_R$ for the free $1$-dimensional trivial $H$-module with coefficients in $R$.

Let $X$ be a $k$-scheme with $k$ being a field of characteristic $p>0$. We have the constant sheaf $\ZZ/p\ZZ$ on  $X_\et$. Note that the associated presheaf of the group scheme $\mathbb G_a$ is a sheaf on both $X_\et$ and $X_{\Zar}$ \cite[p. 52]{MilneEC}. In particular, it gives the structure sheaf on $X_{\Zar}$.  Recall the Artin--Schreier sequence \cite[p. 67]{MilneEC}.

\begin{lemdfn}
\normalfont
Let $X$ be a $k$-scheme with $k$ being a field of characteristic $p>0$. Let $F_p$ be the $p$-Frobenius on $\Osh_X$ sending $x\mapsto  x^p$. There exists a short exact sequence of sheaves on $X_\et$:
\[
0 \rightarrow \ZZ/p\ZZ \longrightarrow \mathbb{G}_{a}\overset{F_p-1}{\longrightarrow} \mathbb{G}_{a}\rightarrow 0 .
	\]
We call this the \emph{Artin--Schreier sequence}.
\end{lemdfn}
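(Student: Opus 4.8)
The plan is to verify the three standard facts underlying the displayed complex: that $F_p-1$ is a homomorphism of sheaves of abelian groups, that its kernel is the constant sheaf $\ZZ/p\ZZ$, and that it is an epimorphism on $X_\et$. Throughout I use that on the small \'{e}tale site $\mathbb{G}_a$ is the sheaf $(U\to X)\mapsto \Gamma(U,\Osh_U)$, which the excerpt already records is a sheaf on $X_\et$. For additivity: given an \'{e}tale $U\to X$ and $f,g\in\Gamma(U,\Osh_U)$, the identity $(f+g)^p=f^p+g^p$ holds because $\mathrm{char}\,k=p$ kills every intermediate binomial coefficient; hence $F_p$, and therefore $F_p-1$, is a morphism of sheaves of abelian groups.

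For exactness at the left-hand $\mathbb{G}_a$: a section $f\in\Gamma(U,\Osh_U)$ is annihilated by $F_p-1$ exactly when $f^p-f=0$. Since $T^p-T=\prod_{a\in\Fp}(T-a)$ splits with $p$ distinct roots, such an $f$ is, on each connected component of a suitable \'{e}tale cover of $U$, constant with value in $\Fp$. This identifies $\ker(F_p-1)$ with the constant sheaf attached to $\Fp\cong\ZZ/p\ZZ$ and furnishes the monomorphism $\ZZ/p\ZZ\hookrightarrow\mathbb{G}_a$.

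The only substantive point — and the reason the \'{e}tale topology is needed — is surjectivity. Given $g\in\Gamma(U,\Osh_U)$ for some \'{e}tale $U\to X$, I would form the Artin--Schreier cover
\[
U':=\Spec\bigl(\Osh_U[T]/(T^p-T-g)\bigr)\longrightarrow U.
\]
Since the derivative of $T^p-T-g$ in $T$ equals $pT^{p-1}-1=-1$, a unit, the map $U'\to U$ is \'{e}tale; it is moreover finite and locally free of degree $p$, hence faithfully flat, so $\{U'\to U\}$ is a covering in $X_\et$. The image $t$ of $T$ in $\Gamma(U',\Osh_{U'})$ then satisfies $(F_p-1)(t)=g|_{U'}$, so $g$ lies in the image \'{e}tale-locally, and $F_p-1$ is surjective as a map of sheaves on $X_\et$. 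The main obstacle is exactly this last step: the equation $x^p-x=g$ admits no Zariski-local solution in general, and one must produce the cover $U'$ and check that it is \'{e}tale and covering; everything else is a formal consequence of characteristic $p$ arithmetic. (This is the standard argument, cf. \cite[67]{MilneEC}.)
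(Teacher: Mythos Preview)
Your argument is correct and is the standard one. The paper itself does not prove this statement: it records it as a Lemma/Definition and refers to \cite[67]{MilneEC}, which contains exactly the argument you have written out.
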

 
 Note that by \cite[p. 114]{MilneEC}, the cohomology of $\mathbb G_a$ on $X_\et$ and $X_{\Zar}$ are isomorphic.

\begin{remark}
Recall that for $G=\GL_n$, we have
	\[\arraycolsep=2pt\def\arraystretch{1.2}
H^0\left(\Xo(w), \Osh_{\Xo(w)}\right)=\Ind_{P_I(\Fq)}^{\GL_n (\Fq)}\Fpbar,
\]
where $I=\normalfont\text{supp}(w)$ and $P_I=B^*W_IB^*$. 
Note that $p$-Frobenius $F_p$ on $\Xo(w)$ is given by the identity on the topological space and $p$-power map on sections of $\Osh_{\Xo(w)}$. Thus the $p$-power map given by $F_p$ on the global sections $H^0\big(\Xo(w), \Osh_{\Xo(w)}\big)$ on the left hand side is induced by the $p$-power map on $\Fpbar$.
\end{remark}

\begin{proposition}\label{ZpnZ}
	Let $G=\GL_n$, and $w\in F^+$ with $I=\normalfont\text{supp}(w)$ and $P_I=B^*W_IB^*$. For any integer $m>0$, we have 
	\[\arraycolsep=2pt\def\arraystretch{1.2}
\Het^k\left(\Xo(w), \mathbb Z/p^m\mathbb Z\right)=\left\{
\begin{array}{lc}
\Ind_{P_I(\Fq)}^{\GL_n(\Fq)} \mathds{1}_{\mathbb Z/p^m\mathbb Z}, & k=0,\\
0, & k>0.
\end{array}
\right.
\]
\end{proposition}

\begin{proof}
Let $m=1$, and consider
 the long exact sequence associated to the Artin--Schreier sequence on $\Xo(w)$.
\[\arraycolsep=1pt\def\arraystretch{2}
\begin{array}{c}
0\to H^0_{\text{\'{e}t}}\big(\Xo(w), \ZZ/p\ZZ \big) \to H^0\big(\Xo(w), \Osh_{\Xo(w)}\big) \overset{F_p-1}{\longrightarrow} H^0\big(\Xo(w), \Osh_{\Xo(w)}\big)\to
\\  \to H^1_{\text{\'{e}t}}\big(\Xo(w), \ZZ/p\ZZ \big) \to H^1\big(\Xo(w), \Osh_{\Xo(w)}\big)  \overset{F_p-1}{\longrightarrow} H^1\big(\Xo(w), \Osh_{\Xo(w)}\big)  \to \cdots.
\end{array}
\]
By Theorem \ref{main}, we know that 
\[H^k\big(\Xo(w), \Osh_{\Xo(w)}\big) =0,\]
 for all $k>0$,  and 
 \[H^0\big(\Xo(w), \Osh_{\Xo(w)}\big)=\Ind_{P_I(\Fq)}^{\GL_n (\Fq)}\Fpbar.\]
 As a consequence,  the long exact sequence above becomes
\begin{multline*}
	0\to H^0_{\text{\'{e}t}}\big(\Xo(w), \ZZ/p\ZZ \big)  \to  H^0\big(\Xo(w), \Osh_{\Xo(w)}\big)  \overset{F_p-1}{\longrightarrow} 
	\\
	H^0\big(\Xo(w), \Osh_{\Xo(w)}\big)  
	\to  H^1_{\text{\'{e}t}}\big(\Xo(w), \ZZ/p\ZZ \big)\to 0.
\end{multline*}

Since $\Fpbar$ is algebraically closed, the polynomial $x^p-x\in \Fpbar[x]$ always splits. Note that $\Ind_{P_I(\Fq)}^{\GL_n (\Fq)}\Fpbar$ is a finite dimensional $\Fpbar$-vector space, so $F_p-1$ is the map $x\mapsto x^p-x$ on each coordinate. Hence $F_p-1$ is surjective. Therefore $H^1_{\text{\'{e}t}}\big(\Xo(w), \ZZ/p\ZZ \big)=0$.

For every integer $m\geq 2$, we have the short exact sequence
\[
0\to \mathbb Z/p^{m-1}\mathbb Z\overset{\alpha}{\longrightarrow}\mathbb Z/p^m\mathbb Z\overset{\beta}{\longrightarrow}\ZZ/p\ZZ\to 0,
\]
where 
\[\arraycolsep=1pt\def\arraystretch{1.5}
\begin{array}{rcl}
	\alpha: a \  {\rm mod} \ p^{m-1} &\longmapsto & pa \ {\rm mod} \ p^m\\
	\beta: b \ {\rm mod} \ p^m &\longmapsto & b \ {\rm mod} \ p. 
\end{array}
\]
We get an induced long exact sequence for every $m\geq 2$, 
	 \begin{equation*}
	 	 \arraycolsep=1pt\def\arraystretch{2}
	 \begin{array}{c}
	 	0\to H^0_{\text{\'{e}t}}\big(\Xo(w), \ZZ/p^{m-1}\ZZ \big) \to H^0_{\text{\'{e}t}}\big(\Xo(w), \ZZ/p^{m}\ZZ \big) \to H^0_{\text{\'{e}t}}\big(\Xo(w), \ZZ/p\ZZ \big) \to \cdots \\
	 	\cdots \to H^k_{\text{\'{e}t}}\big(\Xo(w), \ZZ/p^{m-1}\ZZ \big) \to  H^k_{\text{\'{e}t}}\big(\Xo(w), \ZZ/p^{m}\ZZ \big) \to H^k_{\text{\'{e}t}}\big(\Xo(w), \ZZ/p\ZZ \big)\to \cdots .
	 		 \end{array}
	 \end{equation*}
	 We know that $H^k_{\text{\'{e}t}}\big(\Xo(w), \ZZ/p\ZZ \big)=0$ for all $k>0$. By induction on $m$, assume that $H^k_{\text{\'{e}t}}\big(\Xo(w), \ZZ/p^{m-1}\ZZ \big)=0$ for all $k>0$, so the long exact sequence yields that $H^k_{\text{\'{e}t}}\big(\Xo(w), \ZZ/p^m\ZZ \big)=0$ for all $k>0$. Therefore for any integer $m>0$, we have
	 \[H^k_{\text{\'{e}t}}\big(\Xo(w), \ZZ/p^m\ZZ \big)=0
	 \]
	  for all $k>0$.

	 For any commutative ring $A$ of characteristic $p$, denote $W_j(A)$ to be the ring of Witt vectors of length $j$ with coefficients in $A$.  Recall that $W_j(A)$ is set-theoretically in bijection with the product $A^j$, but the bijection is not an isomorphism of rings when $j>1$. However, the $p$-Frobenius $F_p$ on $W_j(A)$ is compatible with the $p$-Frobenius on $A$, in the sense that $F_p$ on $W_j(A)$ is the map $x\mapsto x^p$ on each coordinate. See \cite[\textsection 0.1]{Illusie} for an introduction on the ring of Witt vectors. 
	  
	 For any integer $m>0$, let $W_m\left(\mathcal O_{\Xo(w)}\right)$ be the sheaf of Witt vectors of length $m$ on $\Xo(w)$ \cite[\textsection 0.1.5]{Illusie}. The stalk of $W_m\left(\mathcal O_{\Xo(w)}\right)$ at a point $x\in X$ is $W_m\left(\mathcal O_{\Xo(w), x}\right)$ \cite[(01.5.6)]{Illusie}. Note that $W_m\left(\mathcal O_{\Xo(w)}\right)$ are coherent sheaves on $\Xo(w)$ \cite[\textsection 2]{SerreWitt}. Similar to the ring of Witt vectors, the sections of the coherent sheaf $W_m\left(\mathcal O_{\Xo(w)}\right)$ are set-theoretically in bijection with the corresponding sections of $\mathcal O_{\Xo(w)}^m$, but the bijection is not an isomorphism of rings when $m>1$. The $p$-Frobenius $F_p$ on $W_m\left(\mathcal O_{\Xo(w)}\right)$ is compatible with the $p$-Frobenius on $\mathcal O_{\Xo(w)}$, in the sense that it is $x\mapsto x^p$ on each coordinate.
	  
	 On $\Xo(w)_{\et}$, we have the \emph{Artin--Schreier--Witt exact sequence}, cf. \cite[Proposition 3.28]{Illusie},
	 \[
	 0\longrightarrow \mathbb Z/p^m\mathbb Z\to W_m\left(\Osh_{\Xo(w)}\right)\overset{F_{p}-1}{\longrightarrow} W_m\left(\Osh_{\Xo(w)}\right)\to 0.
	 \]
	 One attains the long exact sequence 
	 \begin{multline*}
	 	0\longrightarrow H^0\left(\Xo(w), \mathbb Z/p^m\mathbb Z\right) \to H^0\left(\Xo(w), W_m\left(\Osh_{\Xo(w)}\right)\right)\overset{F_{p}-1}{\longrightarrow}\\
	 	 H^0\left(\Xo(w), W_m\left(\Osh_{\Xo(w)}\right)\right)\to \cdots .
	 \end{multline*}
	 We see that $H^0\left(\Xo(w), \mathbb Z/p^j\mathbb Z\right)=\ker(F_p-1)$. 
	 
	 First, let $\supp(w)=S$.  Since $\Xo(w)$ is smooth projective over $\Fpbar$ and 
	 \[H^0\left(\Xo(w), \mathcal O_{\Xo(w)}\right)=\Fpbar,\]
	  we know that $H^0\left(\Xo(w), W_m\left(\Osh_{\Xo(w)}\right)\right)= W_m\left(\Fpbar\right)$. Note that the $p$-Frobenius $F_p$ on $W_m\left(\Osh_{\Xo(w)}\right)$ is compatible with taking sections, so it is compatible with the Frobenius on $W_m\left(\Fpbar\right)$. 	 
	 Thus we have 
	 \[\ker(F_p-1)=W_m(\mathbb F_p)\]
	  for all $m>0$. We know that $W_m(\mathbb F_p)=\mathbb Z/p^m\mathbb Z$ for all $m>0$. Since $\GL_n(\Fq)$ acts trivially on $H^0\left(\Xo(w), \mathcal O_{\Xo(w)}\right)=\Fpbar$ and $\GL_n(\Fq)$ acts
	  trivially on each of the coordinate of $W_m\left(\Fpbar\right)$, we see that 
	   $W_m\left(\Fpbar\right)$ inherits a trivial $\GL_n(\Fq)$-action.
	   	 	 
	 Now let $w\in W$ be an arbitrary element and set $I=\supp(w)$.  Via \cite[Proposition 2.3.8]{DMR} (cf. Theorem \ref{main}) we have a $\GL_n(\Fq)$-equivariant surjective morphism
\[
\Xo(w) \longrightarrow \GL_n^F/P_I^F
\]
such that the fibres are $\Xo_{L_I}(w)$. In particular, $\GL_n(\Fq)$ acts on the set of fibres transitively and the stablizer of each fibre corresponds to a conjugate of $P_I(\Fq)$ in $\GL_n(\Fq)$. 
	 
	 	By definition one has
	 
	 \[
	 W_m\left(\Osh_{\Xo(w)}\right)\left(\Xo(w)\right)=W_m\left(\Osh_{\Xo(w)}\left(\Xo(w)\right)\right),
	 \]
	 and since the functor $W_m$ is a finite limit, we have
	 \[
	 W_m\left(\Osh_{\Xo(w)}\left(\Xo(w)\right)\right)=\Ind_{P_I(\Fq)}^{\GL_n (\Fq)} W_m\left(\Osh_{\Xo_{L_I}(w)}\left(\Xo_{L_I}(w)\right)\right).
	 \]
	It follows from the proof of Theorem \ref{main} that 	$\Osh_{\Xo_{L_I}(w)}\left(\Xo_{L_I}(w)\right)=\Fpbar$,     so $\Osh_{\Xo_{L_I}(w)}\left(\Xo_{L_I}(w)\right)$ is the trivial $P_I(\Fq)$-representation. As before, this makes $W_m\left(\Osh_{\Xo_{L_I}(w)}\left(\Xo_{L_I}(w)\right)\right)$ the 1-dimensional trivial $P_I(\Fq)$-module. Thus we have
	
	\[
	\ker(F_p-1)=\Ind_{P_I(\Fq)}^{\GL_n (\Fq)} W_m(\Fp)=\Ind_{P_I(\Fq)}^{\GL_n (\Fq)} \mathds{1}_{\mathbb Z/p^m\mathbb Z}
	\]
	where $\mathds{1}_{\mathbb Z/p^m\mathbb Z}$ is the trivial $P_I(\Fq)$-representation with coefficients in $\mathbb Z/p^m\mathbb Z$. This finishes the proof.
	 		 \end{proof}

	 		 \begin{corollary}\label{Zpxo}
	Let $G=\GL_n$, and $w\in F^+$ with $I=\normalfont\text{supp}(w)$. Let $P_I=B^*W_IB^*$. Then one has
	\[\arraycolsep=2pt\def\arraystretch{1.2}
\Het^k\left(\Xo(w), \mathbb Z_p\right)=\left\{
\begin{array}{lc}
\Ind_{P_I(\Fq)}^{\GL_n(\Fq)} \mathds{1}_{\mathbb Z_p}, & k=0,\\
0, & k>0.
\end{array}
\right.
\]
\end{corollary}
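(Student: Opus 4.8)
The plan is to descend from Corollary \ref{ZpnZ} by passing to the inverse limit over $m$. Recall that $\Het^k\big(\Xo(w),\mathbb Z_p\big)$ is the continuous étale cohomology, so for every $k\geq 0$ there is the Milnor short exact sequence
\begin{multline*}
0\to {\varprojlim_m}^{1}\,\Het^{k-1}\big(\Xo(w),\mathbb Z/p^m\mathbb Z\big)\to \Het^k\big(\Xo(w),\mathbb Z_p\big)\\
\to \varprojlim_m \Het^k\big(\Xo(w),\mathbb Z/p^m\mathbb Z\big)\to 0,
\end{multline*}
the transition maps of the inverse systems being those induced by the reductions $\mathbb Z/p^{m+1}\mathbb Z\to\mathbb Z/p^m\mathbb Z$ of constant sheaves. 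As the isomorphisms of Corollary \ref{ZpnZ} are $\GL_n(\Fq)$-equivariant and the Milnor sequence is functorial, all terms and maps above carry compatible $\GL_n(\Fq)$-actions.

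First I dispatch the higher degrees. For $k\geq 2$, Corollary \ref{ZpnZ} gives $\Het^{k-1}\big(\Xo(w),\mathbb Z/p^m\mathbb Z\big)=\Het^{k}\big(\Xo(w),\mathbb Z/p^m\mathbb Z\big)=0$ for all $m$, so both outer terms of the sequence vanish and hence $\Het^k\big(\Xo(w),\mathbb Z_p\big)=0$. For $k=1$ we still have $\varprojlim_m\Het^1\big(\Xo(w),\mathbb Z/p^m\mathbb Z\big)=0$, so it remains to show ${\varprojlim_m}^{1}\Het^0\big(\Xo(w),\mathbb Z/p^m\mathbb Z\big)=0$; this is exactly the point where the Mittag--Leffler condition is needed. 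Under the identification of Corollary \ref{ZpnZ} the transition map $\Het^0\big(\Xo(w),\mathbb Z/p^{m+1}\mathbb Z\big)\to\Het^0\big(\Xo(w),\mathbb Z/p^m\mathbb Z\big)$ is $\Ind_{P_I(\Fq)}^{\GL_n(\Fq)}$ applied to the reduction $\mathds 1_{\mathbb Z/p^{m+1}\mathbb Z}\twoheadrightarrow\mathds 1_{\mathbb Z/p^m\mathbb Z}$, hence surjective (induction for the finite group $\GL_n(\Fq)$ is exact; equivalently, $\Xo(w)$ is a disjoint union of $|\GL_n^F/P_I^F|$ connected $\Fpbar$-schemes, so $\Het^0\big(\Xo(w),\mathbb Z/p^m\mathbb Z\big)\cong(\mathbb Z/p^m\mathbb Z)^{\oplus|\GL_n^F/P_I^F|}$ with transition maps the componentwise reductions). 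A surjective inverse system of abelian groups satisfies Mittag--Leffler, so its $\varprojlim^1$ vanishes, and therefore $\Het^1\big(\Xo(w),\mathbb Z_p\big)=0$.

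It remains to compute $\Het^0$. In degree zero the Milnor sequence simply reads $\Het^0\big(\Xo(w),\mathbb Z_p\big)=\varprojlim_m \Het^0\big(\Xo(w),\mathbb Z/p^m\mathbb Z\big)$, which by Corollary \ref{ZpnZ} equals $\varprojlim_m \Ind_{P_I(\Fq)}^{\GL_n(\Fq)}\mathds 1_{\mathbb Z/p^m\mathbb Z}$. Since $\GL_n(\Fq)$ is finite, $\Ind_{P_I(\Fq)}^{\GL_n(\Fq)}(-)$ is a finite direct sum and hence commutes with the inverse limit, yielding $\Ind_{P_I(\Fq)}^{\GL_n(\Fq)}\big(\varprojlim_m\mathds 1_{\mathbb Z/p^m\mathbb Z}\big)=\Ind_{P_I(\Fq)}^{\GL_n(\Fq)}\mathds 1_{\mathbb Z_p}$ as $\GL_n(\Fq)$-modules, which is the stated result. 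The only non-formal ingredient is the Mittag--Leffler verification in degree $1$, which however reduces to the transparent fact that the reduction maps between the $H^0$'s are surjective; everything else follows directly from Corollary \ref{ZpnZ}.
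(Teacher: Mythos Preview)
Your proof is correct and follows essentially the same approach as the paper: pass to the inverse limit using the mod $p^m$ result (Corollary \ref{ZpnZ}) and verify Mittag--Leffler. The only minor difference is that you establish surjectivity of the transition maps on $H^0$ directly from the identification $\Het^0\big(\Xo(w),\mathbb Z/p^m\mathbb Z\big)\cong(\mathbb Z/p^m\mathbb Z)^{\oplus|\GL_n^F/P_I^F|}$, whereas the paper instead runs the long exact sequence associated to $0\to\mathbb Z/p^{m-l}\mathbb Z\to\mathbb Z/p^m\mathbb Z\to\mathbb Z/p^l\mathbb Z\to 0$ and uses the vanishing of $\Het^1$ to force surjectivity; both arguments are equivalent here.
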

\begin{proof}
By Proposition \ref{ZpnZ}, since for all $k>0$ and $m>0$, we have $H_\et^k\big(\Xo(w), \ZZ/p^m\ZZ \big)=0$, the tower $\left\{H_\et^k\left(\Xo(w),\mathbb Z/p^m\mathbb Z\right)\right\}_m$ of abelian groups satisfy the Mittag--Leffler condition trivially. Thus for all $k>0$, we have 
	 \[H^k_{\text{\'{e}t}}\big(\Xo(w), \ZZ_p \big)=\varprojlim_{m}H^k_{\text{\'{e}t}}\big(\Xo(w), \ZZ/p^m\ZZ \big)=0.
	 \]
On the other hand, the higher vanishing implies that  whenever we have  $m>l$ and a ${\rm mod}\ p^{l}$ map
\[\arraycolsep=1pt\def\arraystretch{1.5}
\begin{array}{rcl}
	\mathbb Z/p^m\mathbb Z &\longrightarrow &\mathbb Z/p^l\mathbb Z \\
	b\ {\rm mod} \ p^m &\longmapsto & b \ {\rm mod} \ p^l,
\end{array}
\]
Take the induced short exact sequence of sheaves on $\Xo(w)$:
\[
0\to \ZZ/p^{m-l}\ZZ\to \ZZ/p^m\ZZ\to \ZZ/p^l\ZZ\to 0.
\]
Take the associated long exact sequence of cohomology groups
\[\cdots\to 
H^0_{\text{\'{e}t}}\big(\Xo(w), \ZZ/p^{m}\ZZ \big)\to H^0_{\text{\'{e}t}}\big(\Xo(w), \ZZ/p^{l}\ZZ \big)\to H^1_{\text{\'{e}t}}\big(\Xo(w), \ZZ/p^{m-l}\ZZ \big)\to\cdots.
\]
By the higher vanishing from Proposition \ref{ZpnZ}, we see that the morphism 
\[
H^0_{\text{\'{e}t}}\big(\Xo(w), \ZZ/p^{m}\ZZ \big)\longrightarrow H^0_{\text{\'{e}t}}\big(\Xo(w), \ZZ/p^{l}\ZZ \big)
\]
is surjective. 
Therefore the tower $\{H_\et^0\left(\Xo(w),\mathbb Z/p^m\mathbb Z\right)\}_m$ of abelian groups satisfies the Mittag--Leffler condition. Thus we have
  \[
	 H_{\et}^0\left(\Xo(w),\mathbb Z_p\right)=\varprojlim_m H_{\et}^0\left(\Xo(w), \mathbb Z/p^m\mathbb Z\right),
	 \]
	and the identification 
	\[
	H^0_{\text{\'{e}t}}\left(\Xo(w), \mathbb Z_p\right)=\Ind_{P_I(\Fq)}^{\GL_n(\Fq)} \mathbb Z_p.
	\]
 \end{proof}

\subsection{Cohomology of $\Omega^{\ell(w)}$ on $\Xo(w)$}
Let $G=\GL_n$ and $w\in W$. Recall that $\Xo(w)$ is smooth projective of dimension $\ell(w)$. Let $\Omega$ be the sheaf of differentials on $\Xo(w)$ and write $\Omega^p=\wedge^p \Omega$ for the sheaf of differential $p$-forms. In particular, $\omega:=\wedge^{\ell(w)}\Omega=\Omega^{\ell(w)}$ is the dualizing sheaf on $\Xo(w)$.

\begin{proposition}
Let $G=\GL_n, w\in F^+$, $I=\supp(w)$ and $P_I=B^*W_IB^*$, then
	\[\arraycolsep=2pt\def\arraystretch{1.2}
H^k\left(\Xo(w), \Omega^{\ell(w)}\right)=\left\{
\begin{array}{lc}
\Ind_{P_I(\Fq)}^{\GL_n(\Fq)}  \mathds{1}_{\Fpbar}, & k=\ell(w),\\
0, & k\neq \ell(w).
\end{array}
\right.
\]
\end{proposition}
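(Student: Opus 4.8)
The plan is to deduce the statement from Theorem \ref{main} by Serre duality. Set $d:=\ell(w)$ and recall that $\Xo(w)$ is smooth and projective over $\Fpbar$ of dimension $d$, with dualizing sheaf $\omega_{\Xo(w)}=\Omega^{d}$. Serre duality on $\Xo(w)$ then provides, for every $k\geq 0$, a perfect pairing of finite-dimensional $\Fpbar$-vector spaces $H^{k}(\Xo(w),\Omega^{d})\times H^{d-k}(\Xo(w),\Osh_{\Xo(w)})\to\Fpbar$, so that $H^{k}(\Xo(w),\Omega^{d})\cong H^{d-k}(\Xo(w),\Osh_{\Xo(w)})^{\vee}$. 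By Theorem \ref{main} the right-hand side vanishes unless $d-k=0$; hence $H^{k}(\Xo(w),\Omega^{\ell(w)})=0$ for $k\neq\ell(w)$, and $H^{\ell(w)}(\Xo(w),\Omega^{\ell(w)})\cong\big(\Ind_{P_I(\Fq)}^{\GL_n(\Fq)}\mathds{1}_{\Fpbar}\big)^{\vee}$.

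It remains to make this an isomorphism of $\GL_n(\Fq)$-representations and to compute the contragredient. For the first point I would use that $\Omega^{d}$ carries the canonical $\GL_n(\Fq)$-linearization constructed in the examples of Section \ref{Gequiv}, and that for each $g\in\GL_n(\Fq)$ the automorphism $g\colon\Xo(w)\to\Xo(w)$ is compatible with the trace morphism defining the duality; by the naturality of Serre duality, the pairing above is then $\GL_n(\Fq)$-invariant for the chosen linearizations, so $H^{\ell(w)}(\Xo(w),\Omega^{\ell(w)})$ is the contragredient $\GL_n(\Fq)$-module of $\Ind_{P_I(\Fq)}^{\GL_n(\Fq)}\mathds{1}_{\Fpbar}$. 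For the second point one observes that $\Ind_{P_I(\Fq)}^{\GL_n(\Fq)}\mathds{1}_{\Fpbar}$ is the permutation module $\Fpbar[\GL_n(\Fq)/P_I(\Fq)]$, and the basis of cosets defines a nondegenerate $\GL_n(\Fq)$-invariant symmetric bilinear form on it; hence it is self-dual (equivalently, taking contragredients commutes with induction and $\mathds{1}_{\Fpbar}^{\vee}\cong\mathds{1}_{\Fpbar}$). Combining the two points with the previous paragraph yields the asserted description.

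The only genuinely non-formal step is the $\GL_n(\Fq)$-equivariance of Serre duality in the second paragraph: one has to check that the isomorphisms induced on $H^{k}(\Xo(w),\Omega^{d})$ and on $H^{d-k}(\Xo(w),\Osh_{\Xo(w)})$ by an element $g\in\GL_n(\Fq)$ intertwine the duality pairing, which comes down to the compatibility of the trace map with $g^{*}$ and with the linearization of $\Omega^{d}$ chosen in Section \ref{Gequiv}. An alternative that sidesteps this is to rerun the double induction of Sections \ref{sws} and \ref{steps}, since \cite[Theorem 3.2.8]{CR11} and \cite[Theorem 9.15]{Kovacs} also compare the canonical bundles under birational morphisms; however the $\mathbb{P}^{1}$-bundle step is more delicate for $\Omega^{\ell(w)}$ because of the relative dualizing sheaf, so the Serre-duality argument is the cleaner route.
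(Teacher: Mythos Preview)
Your proposal is correct and follows essentially the same route as the paper: Serre duality on the smooth projective $\Xo(w)$ combined with Theorem~\ref{main}, then identification of the contragredient of $\Ind_{P_I(\Fq)}^{\GL_n(\Fq)}\mathds{1}_{\Fpbar}$ with itself via the fact that induction for finite groups commutes with taking duals. The only difference is that the paper dispatches the $\GL_n(\Fq)$-equivariance of the Serre duality pairing by citing \cite[Theorem 29.5]{Hashimoto}, whereas you argue it directly from naturality of the trace map and flag it as the one non-formal step; your alternative remark about rerunning the double induction is not pursued in the paper either.
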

 
\begin{proof}
Since $\Xo(w)$ is a smooth projective $\Fpbar$-scheme of dimension $\ell(w)$, Serre duality implies that there is an isomorphism of $\Fp$-schemes 
\[
H^q\big(\Xo(w),\Omega^p\big)\overset{\sim}{\longrightarrow} H^{\ell(w)-q} \big(\Xo(w), \Omega^{\ell(w)-p}\big)^\vee
\]
for all $p, q\geq 0$. Fix $p=0$ and we may apply Theorem \ref{main} to get
\[
H^{k} \big(\Xo(w), \Omega^{\ell(w)}\big)=0,
\]
when $k\neq \ell(w)$. Setting $p=0$ and $q=0$, we get by Theorem \ref{main} that 
\[
H^0\big(\Xo(w),\Osh_{\Xo(w)}\big)\overset{\sim}{\longrightarrow} H^{\ell(w)} \big(\Xo(w), \Omega^{\ell(w)}\big)^\vee.
\]
 This isomorphism is equivariant under $\GL_n(\Fq)$-action by \cite[Theorem 29.5]{Hashimoto}. Finally, recall that since $\GL_n(\Fq)$ is a finite group, the induction functor commutes with taking the dual of a representation. 
\end{proof}

\section{The compactly supported ${\rm mod} \ p^m$ and $\mathbb Z_p$ \'{e}tale cohomology of $X(w)$}\label{corollaryX(w)}

Throughout this section, let $G=\GL_n$ and $w\in W$ as in Section \ref{GLdfn}. Fix a reduced expression $w=t_1\cdots t_r,$ $t_j\in S$. This reduced expression determines a smooth compactification $\Xo(w)$ for $X(w)$. We also have an  isomorphism  $X\left(t_1,\ldots, t_r\right)\overset{\sim}{\rightarrow}X(w)$ cf. Remark \ref{isoofgeneralizeddlvar}. By \cite[Proposition 3.2.2]{DMR}, we have the following decomposition:
\[
\Xo(w):=\Xo\left(t_1,\ldots, t_r\right)=X\left(t_1,\ldots, t_r\right)\bigcup^{\boldsymbol{\cdot} } \left( \bigcup_{\substack{v\prec w\\ \ell(v)=\ell(w)-1}} \Xo(v)\right),
\]
where $\prec$ is the Bruhat order on $ F^+$.
Let us denote $Y:= \Xo(w)\Big\backslash X\left(t_1,\ldots, t_r\right)$ in this section. We use this stratification and Proposition \ref{ZpnZ} to obtain the compactly supported $\mathbb \ZZ/p^m\mathbb Z$-cohomology of $X(w)$. 
In order to do this, we construct an exact sequence similar to Mayer--Vietoris spectral sequence with respect to the stratification of $Y$. 
 The method has been used to compute the compactly supported $\ell$-adic cohomology groups of $X(w)$ in
   \cite[\textsection 5, \textsection 7]{Orlik18}.

\subsection{An acyclic resolution for the Steinberg representation for a Levi subgroup of $\GL_n$}\label{acyclicity}	 

Let $w\in W$ such that  $w=t_1\cdots t_r$ with $t_{j}\in S$ are all distinct from one another. We have the associated parabolic subgroup $P_I=B^*W_IB^*$, where $I=\supp(w)$. Set $I_u=\supp(u)$ for  $u\preceq w$. Consider the following sequence:
	\begin{multline}\label{stseq}
	\Ind_{P_I(\Fq)}^{\GL_n(\Fq)}\mathds{1}_{\mathbb Z/p^m\mathbb Z}\overset{d_0}{\longrightarrow} \bigoplus_{\substack{u\prec w\\ \ell(u)=\ell(w)-1}}\Ind_{P_{I_u}(\Fq)}^{\GL_n(\Fq)}\mathds{1}_{\mathbb Z/p^m\mathbb Z}\to\cdots \\
		\cdots \to \bigoplus_{\substack{u\prec w \\\ell(u)=\ell(w)-i+1}}\Ind_{P_{I_{u}}(\Fq)}^{\GL_n(\Fq)}\mathds{1}_{\mathbb Z/p^m\mathbb Z}\overset{d_{i-1}}{\longrightarrow} 
		\bigoplus_{\substack{u\prec w\\ \ell(u)=\ell(w)-i}}\Ind_{P_{I_u}(\Fq)}^{\GL_n(\Fq)}\mathds{1}_{\mathbb Z/p^m\mathbb Z}\overset{d_i}{\longrightarrow}\\
		\overset{d_i}{\longrightarrow} \bigoplus_{\substack{u\prec w\\ \ell(u)=\ell(w)-i-1}}\Ind_{P_{I_u}(\Fq)}^{\GL_n(\Fq)}\mathds{1}_{\mathbb Z/p^m\mathbb Z}\to\cdots \\
		\cdots \to \bigoplus_{\substack{u\prec w\\ \ell(u)=1}}\Ind_{P_{I_u}(\Fq)}^{\GL_n(\Fq)}\mathds{1}_{\mathbb Z/p^m\mathbb Z}\overset{d_{\ell(w)-1}}{\longrightarrow} 
	 	\Ind_{B^*(\Fq)}^{\GL_n(\Fq)}\mathds{1}_{\mathbb Z/p^m\mathbb Z}.
	\end{multline}
	For all $u_{i+1}\preceq u_{i}\preceq w$ with $\ell(u_{i+1})=\ell(u_i)-1$, let \[\iota_{u_i}^{u_{i+1}}:\Ind_{P_{u_i}(\Fq)}^{\GL_n(\Fq)}\mathds{1}_{\mathbb Z/p^m\mathbb Z}\to \Ind_{P_{u_{i+1}}(\Fq)}^{\GL_n(\Fq)}\mathds{1}_{\mathbb Z/p^m\mathbb Z}\]  be the inclusion map, where $P_{u_i}:=P_{\supp(u_i)}$. 	
	Then the map $d_i$ is defined by 
	\[
	(f_{u_i})_{u_i}\mapsto \left(\sum_{u_i}(-1)^{\alpha(u_i\to u_{i+1})} \iota_{u_i}^{u_{i+1}}(f_{u_i})\right)_{u_{i+1}},
	\]
	where $\alpha(u_i\to u_{i+1})$ is an integer determined as follows: if $u_{i+1}$ is obtained from $u_i$ by deleting the $r$-th $s\in S$ in its product expression, then $\alpha(u_i\to u_{i+1})=r$. 

	We will see in the following proposition that this sequence is an acyclic complex. In particular, if $w\in W$ is a Coxeter element, then (up to augmentation) the complex (\ref{stseq}) gives a resolution for the Steinberg representation $\St_{\GL_n}$.

\begin{proposition}\label{seqforindrep}
	Let $w=t_1\cdots t_r$ such that $t_j$ are all distinct. Let $P_I=B^*W_IB^*$, where $I=\supp(w)$.  Set $I_u=\supp(u)$ for $u\preceq w$.  Then the sequence (\ref{stseq}) is an acyclic complex. Furthermore, $d_0$ is injective and
	the cokernel of $d_{\ell(w)-1}$ is $\Ind_{P_I(\Fq)}^{\GL_n(\Fq)}{\normalfont\text{St}_{L_I}}$, where
	\[
\St_{L_I}	:=\Ind_{(B^*\cap L_I)(\Fq)}^{L_I(\Fq)}\mathds{1}_{\mathbb Z/p^m\mathbb Z}\Big/ \sum_{(P\cap L_I)\supsetneq (B^*\cap L_I)} \Ind_{(P\cap L_I)(\Fq)}^{L_I(\Fq)}\mathds{1}_{\mathbb Z/p^m\mathbb Z}.
	\]
	\end{proposition}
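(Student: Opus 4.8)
The plan is to recognize the complex (\ref{stseq}) as a complex of coefficient systems on the Tits building (or rather its combinatorial shadow), and to reduce acyclicity to a purely combinatorial statement about the poset of subwords of $w$. First I would observe that, since the $t_j$ are all distinct, the set of subwords $u \preceq w$ is in bijection with the power set of $I = \supp(w) = \{t_1,\dots,t_r\}$, and $\ell(u) = |\supp(u)|$; thus the indexing poset of (\ref{stseq}) is the Boolean lattice on $r$ elements, graded by cardinality, and the sign convention $(-1)^{\alpha(u_i \to u_{i+1})}$ is exactly the simplicial boundary sign once one fixes the ordering $t_1 < \dots < t_r$. Checking $d_i \circ d_{i-1} = 0$ is then the standard verification that two ways of deleting two letters differ by a transposition of signs, combined with the fact that the inclusion maps $\iota$ compose compatibly: $\iota_{u_{i-1}}^{u_{i+1}}$ is independent of the intermediate $u_i$.

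For exactness, the key step is to tensor the whole picture over $\mathbb{Z}/p^m\mathbb{Z}$ with the transitive $\GL_n(\Fq)$-set description: $\Ind_{P_{I_u}(\Fq)}^{\GL_n(\Fq)} \mathds{1}_{\mathbb{Z}/p^m\mathbb{Z}}$ is the free module on $\GL_n(\Fq)/P_{I_u}(\Fq)$, i.e.\ on the set of parabolic subgroups of type $I_u$. So (\ref{stseq}) is the direct sum, over the $\GL_n(\Fq)$-orbits, of the augmented simplicial chain complexes of the order complexes of intervals in the parabolic poset; more efficiently, after choosing a fixed parabolic $P \supseteq B^*$ of type $I$, the complex decomposes $\GL_n(\Fq)$-equivariantly and each summand is the reduced chain complex (shifted) of the barycentric-type subdivision of a simplex with vertex set $I$. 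Concretely, I would fix the augmentation $\Ind_{B^*(\Fq)}^{\GL_n(\Fq)}\mathds{1} \to 0$ at the bottom and $0 \to \Ind_{P_I(\Fq)}^{\GL_n(\Fq)}\mathds{1}$ at the top, and show the resulting complex is the shifted, augmented cochain complex of a simplex $\Delta^{r-1}$, hence acyclic except in the single surviving degree. The injectivity of $d_0$ is immediate because $d_0$ is a sum over the single term $u = w$ of inclusion maps into distinct summands whose images intersect trivially — or simply because the simplex is connected so $H^0$ of the augmented complex vanishes. The identification of $\coker d_{\ell(w)-1}$ is then just the definition of that cokernel: it is $\Ind_{B^*(\Fq)}^{\GL_n(\Fq)}\mathds{1}_{\mathbb{Z}/p^m\mathbb{Z}}$ modulo the sum of the images of the $\iota_{u}^{e}$ for $u$ of length $1$, which are exactly the $\Ind_{P_{I_u}(\Fq)}^{\GL_n(\Fq)}\mathds{1}_{\mathbb{Z}/p^m\mathbb{Z}}$ sitting inside $\Ind_{B^*(\Fq)}^{\GL_n(\Fq)}\mathds{1}_{\mathbb{Z}/p^m\mathbb{Z}}$ as the $P_{I_u}(\Fq)$-invariant-induced submodules.

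The main obstacle, I expect, is bookkeeping the signs and the maps carefully enough that "$d_i \circ d_{i-1}=0$" and the identification with a genuine simplicial cochain complex are not merely asserted but visibly correct — in particular making sure that $\alpha(u \to u')$ really does behave like a simplicial face index under the chosen total order on $S$, and that the inclusion maps $\iota$ are the correct (adjunction) maps so that the square-zero relation holds on the nose rather than up to sign ambiguity. A secondary point to be careful about: everything must be stated over $\mathbb{Z}/p^m\mathbb{Z}$ (not a field), so I should only use that the relevant chain complex of the simplex is acyclic over $\mathbb{Z}$ — which it is, being contractible — and then base-change; no semisimplicity of $\mathbb{Z}/p^m\mathbb{Z}[\GL_n(\Fq)]$ is needed. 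This is exactly the mod-$p$ reduction of the Solomon--Tits argument used for the $\ell$-adic case in \cite[\S 5, \S 7]{Orlik18}, and I would cite that for the combinatorial core while spelling out the coefficient change.
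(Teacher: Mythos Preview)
Your proposal correctly identifies the indexing poset as Boolean and the sign convention as simplicial, but the central geometric identification is wrong: the complex (\ref{stseq}) is \emph{not} the augmented cochain complex of a simplex $\Delta^{r-1}$, nor does it decompose $\GL_n(\Fq)$-equivariantly into such pieces. The terms have different ranks --- $\Ind_{P_{I_u}(\Fq)}^{\GL_n(\Fq)}\mathds{1}$ is free of rank $|\GL_n(\Fq)/P_{I_u}(\Fq)|$, which genuinely varies with $u$ --- so no such decomposition can exist. What the complex \emph{is}, after using $\Ind_{P_{I_u}(\Fq)}^{P_I(\Fq)}\mathds{1}\cong\Ind_{(P_{I_u}\cap L_I)(\Fq)}^{L_I(\Fq)}\mathds{1}$ and then applying the exact functor $\Ind_{P_I(\Fq)}^{\GL_n(\Fq)}$, is the augmented simplicial cochain complex of the Tits building of $L_I(\Fq)$ with $\mathbb{Z}/p^m\mathbb{Z}$ coefficients. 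The Tits building is not contractible; by the Solomon--Tits theorem it has the homotopy type of a bouquet of $(r-1)$-spheres, and that theorem is the genuine input needed.

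The gap is visible from your own conclusion. If the complex were really that of a contractible simplex, it would be exact at \emph{every} term including the last, forcing $\operatorname{coker} d_{\ell(w)-1}=0$; but that cokernel is $\Ind_{P_I(\Fq)}^{\GL_n(\Fq)}\St_{L_I}$, which is nonzero. So the ``single surviving degree'' cannot be explained by contractibility --- it comes from $\tilde H^{r-1}$ of a bouquet of spheres. Your closing remark that this is the mod-$p$ reduction of the Solomon--Tits argument is the right instinct, but the body of the proposal never actually invokes it; the attempt to replace it with a statement about the Boolean poset of subwords is not enough. The paper's proof does exactly what you gesture at in your last line: it identifies the complex with the Tits complex of $L_I$, cites Solomon's theorem for its integral homology, passes to $\mathbb{Z}/p^m\mathbb{Z}$ by universal coefficients (the homology being free), and then applies the exact functor $\Ind_{P_I(\Fq)}^{\GL_n(\Fq)}$.
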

\begin{proof}
We denote $\Ind_{H(\Fq)}^{G(\Fq)}\mathds{1}_{\mathbb Z/p^m\mathbb Z}$ by $i_H^G$ for any subgroup $H$ of a group $G$ when there is no ambiguity. 
	Recall that we have $P_I=L_I\ltimes U_I$, where $L_I$ is a Levi subgroup of $\GL_n$. In particular, $L_I$ is a reductive algebraic group over $\Fpbar$ defined over $\Fq$. Note that the Weyl group of $L_I$ is exactly $W_I$.  Since $L_I$ is reductive, by \cite[Theorem 1]{Solomon} cf. \cite[\textsection 7]{CLT},  the following sequence
		\begin{equation}
	\label{Levititscomplex}
	\bigoplus_{\substack{u\prec w\\ \ell(u)=\ell(w)-1}}
	i_{P_{I_u}\cap L_I}^{L_I}
\overset{d_{1}}{\longrightarrow} 
		\cdots \to \bigoplus_{\substack{u\prec w\\  \ell(u)=1}}
		i_{P_{I_{u}}\cap L_I}^{L_I}
		\overset{d_{\ell(w)-1}}{\longrightarrow} 
	 	i_{B^*\cap L_I}^{L_I}
\end{equation}
identifies with the combinatorial Tits complex $\Delta$ of $L_I$ tensored with $\mathbb Z/p^m\mathbb Z$. In particular,
\[
H_0(\Delta, \mathbb Z)=\mathbb Z \hspace{0.7cm} \text{\normalfont and}\hspace{0.7cm} H_{\ell(w)}(\Delta, \mathbb Z)=\mathbb Z^{\vert U_{L_I}(\Fq)\vert },
\]
where $U_{L_I}$ is a maximal $\Fq$-split unipotent subgroup of $L_I$, and $H_j(\Delta, \mathbb Z)=0$ otherwise. 
By the Universal Coefficients Theorem, we know that $\ker(d_1)=i_{L_I}^{L_I}$ and $\text{\normalfont coker}(d_{\ell(w)-1})=\St_{L_I}$.

We have the following identification 
\[
\Ind_{P_{I_u}(\Fq)}^{P_I(\Fq)}\mathds{1}_{\ZZ /p^m\ZZ}=\Ind_{(P_{I_u}\cap L_I)(\Fq)}^{L_I(\Fq)}\mathds{1}_{\ZZ /p^m\ZZ}
\]
for all $u\preceq w$.
Hence may rewrite the acyclic complex (\ref{Levititscomplex}) as follows: 

\begin{multline}\label{parabtitscomplex}
	0\to 	i_{P_I}^{P_I}\overset{}{\longrightarrow} \bigoplus_{\substack{u\prec w\\ \ell(u)=\ell(w)-1}}
	i_{P_{I_u}}^{P_I}
	\to
		\cdots \\
		\cdots \to 
	 	i_{B^*}^{P_I}
	 	\to \Ind_{B^*(\Fq)}^{P_I(\Fq)}\mathds{1}_{\mathbb Z/p^m\mathbb Z}\Big/ \sum_{P\supsetneq B^*} \Ind_{P(\Fq)}^{P_I(\Fq)}\mathds{1}_{\mathbb Z/p^m\mathbb Z}\to 0.
\end{multline}

 Recall that since $P_I(\Fq)$ is a finite subgroup of $\GL_n(\Fq)$, the functor $\Ind_{P_I(\Fq)}^{\GL_n(\Fq)}$ is exact. Thus we may apply the functor $\Ind_{P_I(\Fq)}^{\GL_n(\Fq)}$ to the acyclic complex (\ref{parabtitscomplex}) and obtain the complex (\ref{stseq}). Therefore the complex (\ref{stseq}) is acyclic and $d_{\ell(w)-1}$ has the cokernel as desired.  
	\end{proof}

We can also prove Proposition \ref{seqforindrep} algebraically. 
\begin{proof}[Alternative proof]
Denote the induced representations $\Ind_{H(\Fq)}^{G(\Fq)}\mathds{1}_{\mathbb Q}$ by $i_H^G(\mathbb Q)$ when there is no ambiguity. Set $A_k=i_{P_{u_k}\cap L_I}^{L_I}(\mathbb Q)$ with $k=1,\ldots,\ell(w)$ such that $u_k\prec w, \ell(u_k)=\ell(w)-1$ and $P_{u_k}=P_{\supp(u_k)}$. It is verified in \cite[Theorem 3.2.5]{DOR} that for any subsets $I,J\subseteq\{1,\ldots,\ell(w)\}$,
\begin{equation}\label{ScSte}
\left(\sum_{i\in I}A_i\right)\cap \left(\bigcap_{j\in J}A_j\right)=\sum_{i\in I}\left(A_i\cap\left(\bigcap_{j\in J}A_j\right)\right).	
\end{equation}

Note that loc.cit. proved this for generalized Steinberg representations, but it does apply for the scenario of the Steinberg representation itself. By \cite[Proposition 2.6]{ScSt}, one obtains an acyclic complex:
\begin{equation}\label{rationalSt}
0\to i_{L_I}^{L_I}(\mathbb Q)	\to
\bigoplus_{\substack{u\prec w\\ \ell(u)=\ell(w)-1}}
	i_{P_{I_u}\cap L_I}^{L_I}(\mathbb Q)
	\to
		\cdots \to 
		i_{B^*\cap L_I}^{L_I} (\mathbb Q)
	 	\to \St_{L_I}(\mathbb Q)\to 0,
\end{equation}
where $\St_{L_I}(\mathbb Q)$ is the Steinberg representation with coefficients in $\mathbb Q$. The complex (\ref{rationalSt}) gives a basis for $i_{B^*\cap L_I}^{L_I} (\mathbb Q)$ such that for all $P\supset B$, the  subrepresentation $i_{P\cap L_I}^{L_I} (\mathbb Q)$ is generated by a subset of this basis. 
More precisely, one starts with fixing a basis for $i_{L_I}^{L_I}(\mathbb Q)$ and inductively fix bases for each constituent of the next term in the complex. This ensures that the intersections would still be free modules generated by a basis element.
 
Taking the $\mathbb Z$-lattice with respect to this basis yields the sub-representations $i_{P\cap L_I}^{L_I} (\mathbb Z)$. In particular, the equality (\ref{ScSte}) holds after intersecting with the $\mathbb Z$-lattice and the intersections are free $\ZZ$-modules generated by basis elements. Thus we  can tensor this $\mathbb Z$-lattice with $\mathbb Z/p^m \mathbb Z$ to obtain  $ B_k:=i_{P_{u_k}\cap L_I}^{L_I}(\mathbb Z/p^m \mathbb Z)$, such that $B_k$'s satisfy the condition (\ref{ScSte}).
Thus by \cite[Proposition 2.6]{ScSt}, we obtain the acyclic complex as desired.

\end{proof}

\subsection{A spectral sequence associated to the stratification} 
Let $w$ be as in Section \ref{acyclicity}.
Denote the category of sheaves on $\Xo(w)_{\text{\'{e}t}}$ by $\normalfont\text{Sh}(\Xo(w)_{\text{\'{e}t}})$. For any closed subscheme $Z$ of $\Xo(w)$ with the inclusion map $\iota:Z\to \Xo(w)$, we denote $\iota_*\mathbb Z/p^m\mathbb Z$ by  $\left(\mathbb Z/p^m\mathbb Z\right)_{Z}$. Again, since $\iota_*$ is exact, we have an  isomorphism of $\mathbb Z/p^m\mathbb Z$-modules:
\[
\Het^r\left(Z ,\mathbb Z/p^m\mathbb Z\right)\overset{\sim}{\longrightarrow} H_\et^r\left(\Xo(w), \left(\mathbb Z/p^m\mathbb Z\right)_{Z}\right).
\]
In addition, if we assume that $Z$ is stable under $\GL_n(\Fq)$-action, then the above isomorphism is $\GL_n(\Fq)$-equivariant. 

\medskip
Let $R=\mathbb Z/p^m\mathbb Z$, for any $m> 0$. For $u\preceq w$, we denote the constant sheaf $R$ on $\Xo(u)$ by $R_{\Xo(u)}$.  
We have a sequence of constant sheaves on $\Xo(w)_{\text{\'{e}t}}$:
\begin{multline}\label{filtrationZpn}
	R_{\Xo(w)}
	\to \bigoplus_{\substack{u \prec w\\ \ell(u)=\ell(w)-1}} R_{\Xo(u)}\to \bigoplus_{\substack{u \prec w\\  \ell(u)=\ell(w)-2}} R_{\Xo(u)}\to \cdots \\
 \cdots \to \bigoplus_{\substack{u \prec w\\  \ell(u)=\ell(w)-i}} R_{\Xo(u)} \to \cdots \to \bigoplus_{\substack{u \prec w\\  \ell(u)=1}} R_{\Xo(u)} \to R_{X(e)}.
\end{multline}
Let $\{U_{\gamma}\to \Xo(w)\}_{\gamma}$ be an \'{e}tale cover of $\Xo(w)$, and consider the $i-1, i, i+1$-th terms of this complex 
\begin{equation*}\label{summand}
 \bigoplus_{\substack{u \prec w\\  \ell(u)=\ell(w)-i+1}} R_{\Xo(u)}(U_{\gamma})
 \overset{d_{i-1}}{\longrightarrow}
 \bigoplus_{\substack{u \prec w\\  \ell(u)=\ell(w)-i}} R_{\Xo(u)}(U_{\gamma})
 \overset{d_i}{\longrightarrow}
 \bigoplus_{\substack{u \prec w\\  \ell(u)=\ell(w)-i-1}} R_{\Xo(u)}(U_{\gamma}).
\end{equation*}
We now describe the maps $d_i$. 
Label each summand of the $i$-th term in the complex by $u_i$, by abuse of notation. 
Let

\[(f_{u_i})_{u_i}\in \bigoplus_{\substack{u \prec w\\  \ell(u)=\ell(w)-i}} R_{\Xo(u)}(U_{\gamma})\]
be a section, then we have   
	\[
	d_i\left((f_{u_i})_{u_i}\right)= \left(\sum_{u_i} (-1)^{\alpha(u_i\to u_{i+1})} \left.f_{u_i}\right\vert_{\Xo(u_{i+1})}\right)_{u_{i+1}} ,
	\]
	where $\alpha(u_i\to u_{i+1})$ is as defined in Section \ref{acyclicity}. Here the restriction of $f_{u_i}$ to $\Xo(u_{i+1})$ can be nonzero if and only if $\Xo(u_{i+1})\subseteq \Xo(u_i)$.

	When we fix $u_{i+1}\preceq u_{i-1}\preceq w$, as $\ell(u_{i-1})=\ell(u_{i+1})-2$, there are only two ways to take restrictions from $\Xo(u_{i-1})$ to $\Xo(u_{i+1})$ via $\Xo(u_i')$ for some $u_{i-1}\preceq u_i'\preceq u_{i+1}$.  Thus by the definition of the function $\alpha$, we may conclude that $d_i\circ d_{i-1}=0$. Therefore (\ref{filtrationZpn}) is a complex. 
\begin{lemma}\label{acyclic}
Let  $w=t_1\cdots t_r$ such that $t_j$ are all distinct. Then
the complex (\ref{filtrationZpn}) of sheaves on $\Xo(w)_{\normalfont\text{\'{e}t}}$ is acyclic. 	
\end{lemma}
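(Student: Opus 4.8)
The plan is to show acyclicity by checking it stalk-by-stalk, since a complex of étale sheaves is acyclic if and only if all its stalks (at geometric points) form acyclic complexes of abelian groups. Fix a geometric point $\bar x$ of $\Xo(w) = \Xo(t_1,\dots,t_r)$. Because $w = t_1\cdots t_r$ has all $t_j$ distinct, the stratification $\Xo(w) = \bigcup_{u\preceq w} X(u)$ is indexed by the subwords $u\preceq w$, i.e.\ by subsets of $\{1,\dots,r\}$, and each $\bar x$ lies in a unique open stratum $X(u_{\bar x})$ for a unique subword $u_{\bar x}$. The stalk $(R_{\Xo(u)})_{\bar x}$ is $R$ if $\bar x \in \Xo(u)$ and $0$ otherwise, and $\bar x\in \Xo(u)$ precisely when the unique stratum $X(u_{\bar x})$ through $\bar x$ satisfies $u_{\bar x}\preceq u$, equivalently (viewing subwords of $w$ as subsets of $\{1,\dots,r\}$) when the subset corresponding to $u_{\bar x}$ is contained in the one corresponding to $u$.

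First I would make this combinatorial reduction precise: writing $J\subseteq\{1,\dots,r\}$ for the subset attached to $u_{\bar x}$, the stalk complex at $\bar x$ is
\[
\bigoplus_{\substack{K\supseteq J\\ |K|=r}} R \to \bigoplus_{\substack{K\supseteq J\\ |K|=r-1}} R \to \cdots \to \bigoplus_{\substack{K\supseteq J\\ |K|=1}} R \to \bigoplus_{\substack{K\supseteq J\\ |K|=0}} R,
\]
where the differential is (up to the signs $(-1)^{\alpha(u_i\to u_{i+1})}$ built from the position-deletion function $\alpha$) the simplicial/Koszul-type boundary map. The indexing set $\{K : K\supseteq J\}$ is in bijection with the power set of $\{1,\dots,r\}\setminus J$, so this is exactly the (augmented) simplicial cochain complex of a simplex on $r-|J|$ vertices — equivalently the Koszul complex of $R^{\,r-|J|}$ — which is acyclic when $r-|J|\geq 1$, and is just $R\to R$ the identity (hence acyclic in the relevant range, or exact as the trivial complex $0\to R\xrightarrow{\sim} R$) when $J=\{1,\dots,r\}$, i.e.\ $\bar x$ in the open dense stratum $X(w)$. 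The one point needing care is that the signs $(-1)^{\alpha(u_i\to u_{i+1})}$ genuinely assemble into a valid differential making this the standard complex: this is already implicitly checked in the paragraph preceding the lemma (the verification $d_i\circ d_{i-1}=0$), and the same bookkeeping shows the sign convention agrees with the usual simplicial boundary up to an overall isomorphism of complexes, so acyclicity is unaffected.

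The main obstacle — really the only non-formal point — is verifying that the restriction maps $R_{\Xo(u_i)}\to R_{\Xo(u_{i+1})}$ on stalks are exactly the identity $R\to R$ (when both stalks are nonzero) with the correct sign, so that the stalk complex is \emph{isomorphic} to the Koszul/simplicial complex and not merely abstractly of the same shape. This follows because $R_{\Xo(u)}$ is a \emph{constant} sheaf, so its restriction maps along inclusions of strata are identities on $R$; combined with the definition of $\alpha$ via position of the deleted letter, one gets the standard alternating-sign boundary. Having established that every stalk complex is acyclic, I would conclude that (\ref{filtrationZpn}) is an acyclic complex of sheaves on $\Xo(w)_{\et}$.
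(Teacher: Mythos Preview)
Your approach is essentially the paper's: both check acyclicity on stalks by placing the geometric point $\bar x$ in its unique stratum $X(u_{\bar x})$ and analysing which summands survive. Your packaging is cleaner—identifying the stalk complex with the augmented chain complex of the simplex on $\{1,\dots,r\}\setminus J$ (a Koszul-type complex) and invoking contractibility—whereas the paper runs an ad hoc three-case argument on the relative position of $\ell(u_{\bar x})$ and $i$, with a rather sketchy construction in the deepest case.

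One correction: your description of the case $J=\{1,\dots,r\}$ (i.e.\ $\bar x\in X(w)$) is wrong. The only $K\supseteq J$ is $K=J$, which sits in degree $0$, so the stalk complex is $R\to 0\to\cdots\to 0$, not ``$R\to R$ the identity''. This is precisely the one place where the complex (\ref{filtrationZpn}) is \emph{not} exact: the kernel in degree $0$ is $R=(j_!R)_{\bar x}$, which is exactly what Corollary~\ref{resolutionx(w)} accounts for. The lemma's ``acyclic'' should be read as exactness in degrees $1,\dots,r$; for $\bar x\in X(w)$ this is trivially true since those terms all vanish. So your slip does not damage the argument, but state this boundary case correctly.
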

\begin{proof}
	It suffices to check the acyclicity of the complex (\ref{filtrationZpn}) on the stalks. Let $x\in \Xo(w)$, then the complex would simplify depending on which closed subscheme $x$ lives in.  
	\begin{equation}\label{stalk}
	  \bigoplus_{\substack{u \prec w\\  \ell(u)=\ell(w)-i+1}} R_{\Xo(u),x}
 \overset{d_{i-1}}{\longrightarrow}
 \bigoplus_{\substack{u \prec w\\  \ell(u)=\ell(w)-i}} R_{\Xo(u),x}
 \overset{d_i}{\longrightarrow}
 \bigoplus_{\substack{u \prec w\\  \ell(u)=\ell(w)-i-1}} R_{\Xo(u),x}
	\end{equation}
	
	If we have $x\in X(u_{i-1})$ for some $u_{i-1}\prec w$, $\ell(u_{i-1})=\ell(w)-i+1$. Then we know that $x\notin \Xo(u)$ for all $u\preceq w$ with $\ell(u)\leq i$ and thus
	$R_{\Xo(u),x}=0$.  Then the complex \ref{stalk} is trivially exact at the $i$-th term. 
	
	If we have $x\in X(u_i)$ for some $u_i\prec w$, $\ell(u_i)=\ell(w)-i$. Then for all $R_{\Xo(u),x}=0$ for $u\prec w$, $\ell(u)\leq i-1$. The complex \ref{stalk} becomes
	
	\begin{equation*}
	 \bigoplus_{\substack{u_i\prec u \prec w\\  \ell(u)=\ell(w)-i+1}} R_{\Xo(u),x}
 \overset{d_{i-1}}{\longrightarrow}
 R_{\Xo(u_i),x}
 \overset{}{\longrightarrow}
0.
	\end{equation*}
Since $d_{i-1}$ is obviously surjective, this complex is exact at the $i$-th term in this case. 

If $x\in \Xo(u_{i+1})$,  then for any $f\in \K(d_i)$, all the summand of $d_i(f)$ are $0$. Now if $f=(f_t)_t$, then  for each summand of $d_i(f)$, there exists an even number of nonzero $f_t$'s that maps to it. 
Each such pair of $f_t, f_t'$ would have the property $f_t=-f_t'$ or $f_t=f_t'$. This is because all summands of $d_i(f)$ are $0$.  Now by the definition of $d_{i-1}$, we may build an element of the $i-1$-th term of below using the nonzero terms of $f_t$. 
	\begin{equation*}
	 \bigoplus_{\substack{u \prec w\\  \ell(u)=\ell(w)-i+1}} R_{\Xo(u),x}
 \overset{d_{i-1}}{\longrightarrow}
 \bigoplus_{\substack{u \prec w\\  \ell(u)=\ell(w)-i}} R_{\Xo(u),x}
 \overset{d_i}{\longrightarrow}
 \bigoplus_{\substack{u \prec w\\  \ell(u)=\ell(w)-i-1}} R_{\Xo(u),x}
	\end{equation*}
\end{proof}

\begin{corollary}\label{resolutionx(w)}
Let $j:X(w)\hookrightarrow \Xo(w)$ be the open immersion. Then the complex (\ref{filtrationZpn}) gives a resolution for $j_!\mathbb Z/p^m\mathbb Z$.	
\end{corollary}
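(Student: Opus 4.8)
The plan is to identify $j_!\mathbb Z/p^m\mathbb Z$ with the kernel of the first differential of the complex (\ref{filtrationZpn}) and to deduce exactness in all remaining degrees from Lemma \ref{acyclic}. Write $R=\mathbb Z/p^m\mathbb Z$ and abbreviate (\ref{filtrationZpn}) as $C^0\xrightarrow{d_0}C^1\to\cdots\to C^{\ell(w)}$, so that $C^0=R_{\Xo(w)}$, $C^1=\bigoplus_{v\prec w,\ \ell(v)=\ell(w)-1}R_{\Xo(v)}$ and $C^{\ell(w)}=R_{X(e)}$. Recall from the decomposition recorded at the start of this section that $\Xo(w)$ is the disjoint union of the open stratum $X(w)$ and the closed subset $Y=\Xo(w)\setminus X(w)=\bigcup_{v\prec w,\ \ell(v)=\ell(w)-1}\Xo(v)$, and let $\iota\colon Y\hookrightarrow\Xo(w)$ be the corresponding closed immersion; the starting point is the canonical short exact sequence of étale sheaves $0\to j_!R\to R_{\Xo(w)}\to\iota_{*}R_Y\to 0$.

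First I would check that $d_0$ annihilates the subsheaf $j_!R\subseteq C^0$, so that the inclusion factors through a morphism $\varepsilon\colon j_!R\to\K(d_0)$. This is an adjunction argument: each summand $R_{\Xo(v)}$ of $C^1$ is the direct image of the constant sheaf under the closed immersion $\Xo(v)\hookrightarrow\Xo(w)$ with $\Xo(v)\subseteq Y$, and the pullback of $j_!R$ to $\Xo(v)$ vanishes, whence $\Hom(j_!R,R_{\Xo(v)})=0$.

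Next I would verify that $\varepsilon$ is an isomorphism by passing to stalks, exactly as in the proof of Lemma \ref{acyclic}. If $x\in X(w)$, then no $\Xo(v)$ with $v\prec w$ contains $x$, since $\bigcup_{v\prec w}\Xo(v)=Y$ is disjoint from the open stratum $X(w)$; hence $C^1_x=0$ and $\K(d_0)_x=C^0_x=R=(j_!R)_x$, with $\varepsilon_x$ the identity. If $x\in Y$, then $(j_!R)_x=0$, and $x$ lies on some $\Xo(v)$ with $\ell(v)=\ell(w)-1$; the corresponding component of $d_0$ is $(-1)^{\alpha(w\to v)}$ times the restriction map $R_{\Xo(w),x}\to R_{\Xo(v),x}$ of constant sheaves, which is multiplication by the unit $\pm1$ of $R$, so $d_0$ is injective at $x$ and $\K(d_0)_x=0=(j_!R)_x$. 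Thus $\varepsilon$ is bijective on every stalk, hence an isomorphism $j_!\mathbb Z/p^m\mathbb Z\xrightarrow{\sim}\K(d_0)$.

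Finally, Lemma \ref{acyclic} gives that (\ref{filtrationZpn}) is exact in positive cohomological degrees, i.e.\ $\mathcal H^i(C^\bullet)=0$ for $i\ge1$; combining this with $\K(d_0)\cong j_!\mathbb Z/p^m\mathbb Z$ shows that $0\to j_!\mathbb Z/p^m\mathbb Z\to C^0\to C^1\to\cdots\to C^{\ell(w)}\to 0$ is exact, which is precisely the assertion that (\ref{filtrationZpn}) is a resolution of $j_!\mathbb Z/p^m\mathbb Z$. The only point needing care is the stalk computation of $\K(d_0)$: it rests on the two geometric facts that the open stratum meets no proper closed stratum and that every point of $Y$ sits on a codimension-one boundary stratum $\Xo(v)$ with $\ell(v)=\ell(w)-1$, which together force $d_0$ to be injective off $X(w)$ because the relevant restriction map of constant sheaves is multiplication by $\pm1$. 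The genuine homological input, acyclicity of this simplex-type complex on stalks, has already been carried out in Lemma \ref{acyclic}, so nothing essential remains beyond this bookkeeping.
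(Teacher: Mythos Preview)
Your proof is correct and follows essentially the same approach as the paper: identify $j_!R$ with $\K(d_0)$ by a stalkwise computation splitting into the cases $x\in X(w)$ and $x\in Y$, and invoke Lemma \ref{acyclic} for exactness in the remaining degrees. Your version is simply more detailed, adding the adjunction argument for why $d_0$ kills $j_!R$ and spelling out that the relevant restriction map is multiplication by a unit.
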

\begin{proof}
We need to verify that the following complex is exact at $R$:
\[
0\to j_! R\overset{d_{-1}}{\longrightarrow} R\overset{d_0}{\longrightarrow}\bigoplus_{\substack{u\prec w\\\ell(u)=\ell(w)-1}} R_{\Xo(u)}.
\]
We verify on the stalks. When $x\in X(w)$, we have $j_! R_x\cong R_x$. When $x\in \Xo(w)\backslash X(w)$, then 
\[
R_x\overset{d_0}{\longrightarrow}\bigoplus_{\substack{u\prec w\\\ell(u)=\ell(w)-1}} R_{\Xo(u),x} 
\]
is injective. 
\end{proof}
	 Since the category $\normalfont\text{Sh}_\et(\Xo(w))$ has enough injective objects, by Corollary \ref{resolutionx(w)},  the complex (\ref{filtrationZpn})  is quasi-isomorphic to an injective resolution of $j_!\mathbb Z/p^m\mathbb Z$.  
Take the spectral sequence associated to the complex (\ref{filtrationZpn}), we have
\begin{equation}\label{ss}
E_1^{i,j}= \bigoplus_{\substack{ u\prec w\\ \ell(u)=\ell(w)-i}} H^j_
\et\left( \Xo(u), \mathbb Z/p^m\mathbb Z\right)	\Rightarrow H^{i+j}_{\et, c}\left(X(w),\mathbb Z/p^m\mathbb Z\right).
\end{equation}

\subsection{The \'{e}tale cohomology with compact support for $X(w)$ with coefficients in $\mathbb Z/p^m\mathbb Z$ and $\mathbb Z_p$}
\begin{theorem}\label{modpnX(w)}
Let $G=\GL_n$,  and $w=t_1\cdots t_r\in W$ such that $t_j\in S$ are distinct from one another.  Set $I=\supp(w), I_u=\supp(u)$ and $P_I=B^*W_IB^*=U_I\rtimes L_I$.
Then  
for $k\neq \ell(w), m>0$, 
\begin{equation*}
	H_{\et, c}^k\left(X(w),\mathbb Z/p^m\mathbb Z\right)=0,
\end{equation*}	
and
\begin{equation*}
	H^{\ell(w)}_{\et, c}\left(X(w),\mathbb Z/p^m\mathbb Z\right) \cong \Ind_{B^*(\Fq)}^{\GL_n(\Fq)}\mathds{1}_{\mathbb Z/p^m\mathbb Z}\Bigg/\sum_{\substack{u\prec w\\ \ell(u)=1}}\Ind_{P_{I_{u}}(\Fq)}^{\GL_n(\Fq)}\mathds{1}_{\mathbb Z/p^m\mathbb Z},
\end{equation*}
	 In particular, 
	 
	 \[H^{\ell(w)}_{\et, c}\left(X(w),\mathbb Z/p^m\mathbb Z\right)\cong \Ind_{P_I(\Fq)}^{\GL_n(\Fq)}{\normalfont\text{St}_{L_I}},
	  \]
	  where $\normalfont\text{St}_{L_I}$ is the Steinberg representation for $L_I(\Fq)$ with coefficients in $\mathbb Z/p^m\mathbb Z$. 
	 
\end{theorem}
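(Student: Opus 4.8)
The plan is to run the spectral sequence \eqref{ss} together with the combinatorial input of Proposition~\ref{seqforindrep}. By Proposition~\ref{convergence1} the spectral sequence degenerates at $E_2$, and since $t_1,\dots,t_r$ are distinct every subword $u\preceq w$ is a reduced word with distinct letters, so Corollary~\ref{ZpnZ} applies to each $\Xo(u)$: the only nonzero row is $j=0$, where $E_1^{i,0}=\bigoplus_{u\prec w,\ \ell(u)=\ell(w)-i}\Ind_{P_{I_u}(\Fq)}^{\GL_n(\Fq)}\mathds{1}_{\mathbb Z/p^m\mathbb Z}$. Hence $H^k_{\et,c}(X(w),\mathbb Z/p^m\mathbb Z)$ is computed as the cohomology at position $i=k$ of the complex $(E_1^{\bullet,0},d_1)$. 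The first task is therefore to identify $(E_1^{\bullet,0},d_1)$ with the complex \eqref{stseq}: the differential $d_1$ on $E_1$ is the alternating-sum-of-restrictions map induced by the stratification, and the restriction maps $\Osh_{\Xo(u_i)}(\Xo(u_i))\to\Osh_{\Xo(u_{i+1})}(\Xo(u_{i+1}))$ on global sections become, after applying the Artin--Schreier argument of Corollary~\ref{ZpnZ} (which identifies $H^0_\et(\Xo(u),\mathbb Z/p^m\mathbb Z)$ with $\Ind_{P_{I_u}(\Fq)}^{\GL_n(\Fq)}\mathds{1}_{\mathbb Z/p^m\mathbb Z}$), precisely the inclusion maps $\iota_{u_i}^{u_{i+1}}$ of induced trivial representations, with the same sign convention $\alpha(u_i\to u_{i+1})$. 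So I would spell out this identification and cite Proposition~\ref{seqforindrep}.

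Granting that identification, the conclusion is immediate: Proposition~\ref{seqforindrep} says \eqref{stseq} is an acyclic complex with $d_0$ injective. Acyclicity in the interior forces $H^k_{\et,c}(X(w),\mathbb Z/p^m\mathbb Z)=0$ for $k\neq \ell(w)$ (the injectivity of $d_0$ kills the $k=0$ term, and the degeneration at $E_2$ means there are no further differentials), while at the right end $k=\ell(w)$ we get the cokernel of $d_{\ell(w)-1}$, which by the same Proposition is
\[
\Ind_{B^*(\Fq)}^{\GL_n(\Fq)}\mathds{1}_{\mathbb Z/p^m\mathbb Z}\Bigg/\sum_{\substack{u\prec w\\ \ell(u)=1}}\Ind_{P_{I_u}(\Fq)}^{\GL_n(\Fq)}\mathds{1}_{\mathbb Z/p^m\mathbb Z}.
\]
Finally, to recognize this cokernel as $\Ind_{P_I(\Fq)}^{\GL_n(\Fq)}\text{St}_{L_I}$, I would invoke the identification $\Ind_{P_{I_u}(\Fq)}^{P_I(\Fq)}\mathds{1}=\Ind_{(P_{I_u}\cap L_I)(\Fq)}^{L_I(\Fq)}\mathds{1}$ and the exactness of $\Ind_{P_I(\Fq)}^{\GL_n(\Fq)}$ used inside the proof of Proposition~\ref{seqforindrep}: the subwords $u\prec w$ with $\ell(u)=1$ are exactly the maximal standard parabolics of $L_I$ strictly containing $B^*\cap L_I$ (since $w$ is reduced with distinct letters and $\supp(w)=I$), so the quotient is $\Ind_{P_I(\Fq)}^{\GL_n(\Fq)}\big(\Ind_{(B^*\cap L_I)(\Fq)}^{L_I(\Fq)}\mathds{1}/\sum_{(P\cap L_I)\supsetneq(B^*\cap L_I)}\Ind_{(P\cap L_I)(\Fq)}^{L_I(\Fq)}\mathds{1}\big)=\Ind_{P_I(\Fq)}^{\GL_n(\Fq)}\text{St}_{L_I}$.

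I expect the main obstacle to be the careful bookkeeping in matching the differential $d_1$ of the spectral sequence with the differential $d_i$ of \eqref{stseq}, including the sign function $\alpha$: one must check that the $E_1$-differential really is the alternating sum of restriction maps (which requires unwinding the boundary maps of the resolution in Corollary~\ref{resolutionx(w)} and how they interact with $H^0_\et$), and that the Artin--Schreier isomorphism of Corollary~\ref{ZpnZ} is functorial for the closed immersions $\Xo(u_{i+1})\hookrightarrow\Xo(u_i)$ so that it intertwines geometric restriction with the representation-theoretic inclusion $\iota_{u_i}^{u_{i+1}}$. The $\GL_n(\Fq)$-equivariance should be carried along at every step, using that all the morphisms in the stratification, the Artin--Schreier sequence, and the induction functors are equivariant. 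Once these compatibilities are in place, the statement for $\mathbb Z_p$-coefficients follows as in Corollary~\ref{Zpxo} by passing to the inverse limit and checking the Mittag--Leffler condition, which holds because the transition maps on $H^{\ell(w)}_{\et,c}$ are surjective (the complexes \eqref{stseq} are compatible in $m$ and termwise surjective) and all lower cohomology vanishes.
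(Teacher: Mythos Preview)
Your proposal is correct and follows essentially the same approach as the paper: use the degeneration of the spectral sequence \eqref{ss} at $E_2$ (Proposition~\ref{convergence1}), identify the $E_1^{\bullet,0}$-row with the complex \eqref{stseq} via Corollary~\ref{ZpnZ}, and read off the vanishing and the cokernel from Proposition~\ref{seqforindrep}. One small slip: the parabolics $P_{I_u}\cap L_I$ for $\ell(u)=1$ are the \emph{minimal} (not maximal) standard parabolics of $L_I$ strictly containing $B^*\cap L_I$, but since $i_{P}^{L_I}\subseteq i_{P'}^{L_I}$ whenever $P\supseteq P'$, summing their images still yields $\sum_{P\supsetneq B^*\cap L_I} i_{P}^{L_I}$ and the identification with $\St_{L_I}$ goes through.
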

\begin{proof} 
By Proposition \ref{ZpnZ}, we know that the $E_1^{i,0}$-terms of the spectral sequence \ref{ss} agree with the sequence \ref{stseq}. In addition, for all $j>0$ and $u\preceq w$, we have
	\[
	H^j\left(\Xo(u),\mathbb Z/p^m\mathbb Z\right)=0,
	\] 
	so there is no nonzero terms at $E^{i,j}_1$ when $j\neq 0$ and $E_2=E_\infty$.
It follows from Proposition \ref{seqforindrep} that 
\begin{equation*}
	 	E_2^{\ell(w),0} \cong \Ind_{B^*(\Fq)}^{\GL_n(\Fq)}\mathds{1}_{\mathbb Z/p^m\mathbb Z}\Bigg/\sum_{\substack{u\prec w\\ \ell(u)=1}}\Ind_{P_{I_{u}}(\Fq)}^{\GL_n(\Fq)}\mathds{1}_{\mathbb Z/p^m\mathbb Z},
	 \end{equation*}
	 and $E_2^{i,j}=0$ otherwise. The proof is thus concluded because $E_2^{i,0}\cong H^{i}_{\normalfont\text{\'{e}t, c}}\left(X(w),\mathbb Z/p^m\mathbb Z\right)$.

\end{proof}

\begin{corollary}\label{ZpX(w)}
Let $G=\GL_n$, and $w=t_1\cdots t_r\in W$ such that the $t_j$'s are distinct from one another. Denote $I=\supp(w), I_u=\supp(u), u\preceq w$ and $P_I=B^*W_IB^*$.
Then  
for $k\neq \ell(w), m>0$,
\begin{equation*}
	H_{\et, c}^k\left(X(w),\mathbb Z_p\right)=0,
\end{equation*}	
and
	 \begin{equation*}
	 	 H^{\ell(w)}_{\et, c}\left(X(w),\mathbb Z_p\right) \cong \Ind_{B^*(\Fq)}^{\GL_n(\Fq)}\mathds{1}_{\mathbb Z_p}\Bigg/\sum_{\substack{u\prec w\\ \ell(u)=1}}\Ind_{P_{I_{u}}(\Fq)}^{\GL_n(\Fq)}\mathds{1}_{\mathbb Z_p}.
	 \end{equation*}
	 In particular, 
	 
	 \[H^{\ell(w)}_{\et, c}\left(X(w),\mathbb Z_p\right) \cong \Ind_{P_I(\Fq)}^{\GL_n(\Fq)}{\normalfont\text{St}_{L_I}},\]
	  where $\normalfont\text{St}_{L_I}$ is the Steinberg representation for $L_I(\Fq)$ with coefficients in $\mathbb Z_p$. 
	 \end{corollary}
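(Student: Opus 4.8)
The plan is to deduce the statement from Theorem \ref{modpnX(w)} by passing to the inverse limit over $m$, exactly along the lines of the proof of Corollary \ref{Zpxo}. First I would establish the Mittag--Leffler property of the tower $\{H^k_{\et,c}(X(w),\mathbb Z/p^m\mathbb Z)\}_m$ in every degree. For $m>l$ the short exact sequence of constant sheaves on $X(w)_\et$
\[
0\to \mathbb Z/p^{m-l}\mathbb Z\to \mathbb Z/p^m\mathbb Z\to \mathbb Z/p^l\mathbb Z\to 0
\]
gives a long exact sequence in compactly supported cohomology. Since Theorem \ref{modpnX(w)} forces $H^{k}_{\et,c}(X(w),\mathbb Z/p^{m-l}\mathbb Z)=0$ for $k\neq\ell(w)$, and in particular for $k=\ell(w)+1$, the transition map $H^{\ell(w)}_{\et,c}(X(w),\mathbb Z/p^m\mathbb Z)\to H^{\ell(w)}_{\et,c}(X(w),\mathbb Z/p^l\mathbb Z)$ is surjective; in all other degrees the tower is the zero tower. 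Hence the Mittag--Leffler condition holds in every degree, the relevant $\varprojlim^1$ groups vanish, and the Milnor exact sequence yields $H^k_{\et,c}(X(w),\mathbb Z_p)\cong\varprojlim_m H^k_{\et,c}(X(w),\mathbb Z/p^m\mathbb Z)$ for all $k$ (including $k=\ell(w)+1$, where the $\varprojlim^1$ term is killed by surjectivity).

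For $k\neq\ell(w)$ this limit is $\varprojlim_m 0=0$, which is the first assertion. For $k=\ell(w)$ the remaining task is to identify the inverse limit of the tower of $\GL_n(\Fq)$-representations produced by Theorem \ref{modpnX(w)}. Here I would use the explicit $\mathbb Z$-lattice constructed in the alternate proof of Proposition \ref{seqforindrep}: there is a finitely generated free $\mathbb Z$-module $M$ with $\GL_n(\Fq)$-action such that, compatibly with reduction maps in $m$, one has a $\GL_n(\Fq)$-equivariant identification of $M/p^mM$ with $\Ind_{B^*(\Fq)}^{\GL_n(\Fq)}\mathds 1_{\mathbb Z/p^m\mathbb Z}\big/\sum_{u\prec w,\,\ell(u)=1}\Ind_{P_{I_u}(\Fq)}^{\GL_n(\Fq)}\mathds 1_{\mathbb Z/p^m\mathbb Z}$. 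By functoriality of the spectral sequence (\ref{ss}) in the coefficient sheaf and of the identification of $E_2^{\ell(w),0}$ via Proposition \ref{seqforindrep}, the transition maps in the cohomology tower correspond under Theorem \ref{modpnX(w)} to the natural reductions $M/p^mM\to M/p^lM$; therefore $\varprojlim_m H^{\ell(w)}_{\et,c}(X(w),\mathbb Z/p^m\mathbb Z)\cong M\otimes_{\mathbb Z}\mathbb Z_p$. Running the acyclic-complex argument of Proposition \ref{seqforindrep} with $\mathbb Z_p$-coefficients (equivalently, base-changing $M$) identifies $M\otimes_{\mathbb Z}\mathbb Z_p$ with $\Ind_{B^*(\Fq)}^{\GL_n(\Fq)}\mathds 1_{\mathbb Z_p}\big/\sum_{u\prec w,\,\ell(u)=1}\Ind_{P_{I_u}(\Fq)}^{\GL_n(\Fq)}\mathds 1_{\mathbb Z_p}$, and hence with $\Ind_{P_I(\Fq)}^{\GL_n(\Fq)}\St_{L_I}$ where $\St_{L_I}$ is the Steinberg representation of $L_I(\Fq)$ over $\mathbb Z_p$.

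The only delicate point, and the one I expect to be the main obstacle, is the naturality claim in the last paragraph: one must verify that the isomorphisms of Theorem \ref{modpnX(w)} are compatible with reduction modulo $p^m$, so that the limit of the geometric tower genuinely equals the limit of the algebraically defined tower of Steinberg-type quotients. This is precisely where the free $\mathbb Z$-lattice from the alternate proof of Proposition \ref{seqforindrep} is indispensable: it simultaneously guarantees surjectivity of the transition maps (no $\varprojlim^1$ obstruction) and exhibits the limit as a free $\mathbb Z_p$-module, ruling out any torsion pathology; the rest of the argument is then formal, as in Corollary \ref{Zpxo}.
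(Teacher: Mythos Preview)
Your proposal is correct and follows essentially the same route as the paper: both establish Mittag--Leffler for the tower $\{H^k_{\et,c}(X(w),\mathbb Z/p^m\mathbb Z)\}_m$ via the long exact sequence attached to $0\to\mathbb Z/p^{m-l}\mathbb Z\to\mathbb Z/p^m\mathbb Z\to\mathbb Z/p^l\mathbb Z\to 0$ together with the vanishing from Theorem~\ref{modpnX(w)}, and then pass to the inverse limit. The paper phrases the long exact sequence on $\Xo(w)$ after applying the exact functor $j_!$, whereas you work directly on $X(w)$; this is only a cosmetic difference. Your discussion of the naturality of the identification in degree $\ell(w)$ (via the $\mathbb Z$-lattice from the alternate proof of Proposition~\ref{seqforindrep}) is in fact more careful than the paper, which simply asserts the final identification of the limit without spelling out compatibility with the transition maps.
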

\begin{proof}
By Theorem \ref{modpnX(w)}, we have for all $k\neq \ell(w)$ and $m>0$, $H_{\et, c}^k\left(X(w),\mathbb Z/p^m\mathbb Z\right)=0$. Thus the tower $\left\{H_{\et, c}^k\left(X(w),\mathbb Z/p^m\mathbb Z\right)\right\}_m$ of abelian groups satisfy the Mittag--Leffler condition. Thus for all $k\neq \ell(w)$, we have
\[
H_{\et, c}^k\left(X(w),\mathbb Z_p\right)=\varprojlim_m H_{\et, c}^k\left(X(w),\mathbb Z/p^m\mathbb Z\right)=0.
\]
On the other hand, 
whenever we have  $m>l$ and a ${\rm mod}\ p^{l}$ map
\[\arraycolsep=1pt\def\arraystretch{1.5}
\begin{array}{rcl}
	\mathbb Z/p^m\mathbb Z &\longrightarrow &\mathbb Z/p^l\mathbb Z \\
	b\ {\rm mod} \ p^m &\longmapsto & b \ {\rm mod} \ p^l,
\end{array}
\]
there  is a short exact sequence of sheaves on $\Xo(w)_\et$:
\[
0\to \ZZ/p^{m-l}\ZZ \to \ZZ/p^{m}\ZZ\to \ZZ/p^{l}\ZZ\to 0.
\]
By  \cite[Corollary 8.14]{milneLEC}, since $j: X(w)\to \Xo(w)$ is an open immersion, we know that $j_!$ is an exact functor, so there is a short exact sequence 
\[
0\to j_!\ZZ/p^{m-l}\ZZ \to j_!\ZZ/p^{m}\ZZ\to j_!\ZZ/p^{l}\ZZ\to 0. 
\]
Taking the associated long exact sequence yields
\begin{multline*}
	\cdots\to H^{\ell(w)}_{\text{\'{e}t}}\left(\Xo(w), j_!\ZZ/p^{m}\ZZ \right)\to H^{\ell(w)}_{\text{\'{e}t}}\left(\Xo(w), j_!\ZZ/p^{l}\ZZ \right)\to\\
	\to  H^{\ell(w)+1}_{\text{\'{e}t}}\left(\Xo(w), j_!\ZZ/p^{m-l}\ZZ \right)\to \cdots.
\end{multline*}
The vanishing result for $k\neq \ell(w)$ from Theorem \ref{modpnX(w)} implies that 
\[
H^{\ell(w)}_{\et, c}\left(X(w), \ZZ/p^{m}\ZZ \right)\to H^{\ell(w)}_{\et, c}\left(X(w), \ZZ/p^{l}\ZZ \right)
\]
is surjective. 
Hence the tower of abelian groups $\left\{H_{\et, c}^{\ell(w)}\left(X(w),\mathbb Z/p^m\mathbb Z\right)\right\}_m$ satisfies the Mittag--Leffler condition.  
	Therefore
	\[
	H^{\ell(w)}_{\et, c}\left(X(w),\mathbb Z_p\right)=\varprojlim_m H^{\ell(w)}_{\et, c}\left(X(w),\mathbb Z/ p^m\mathbb Z \right),
	\] 
	and
 \begin{equation*}
	 	 H^{\ell(w)}_{\et, c}\left(X(w),\mathbb Z_p\right) \cong \Ind_{B^*(\Fq)}^{\GL_n(\Fq)}\mathds{1}_{\mathbb Z_p}\Bigg/\sum_{\substack{u\prec w\\ \ell(u)=1}}\Ind_{P_{I_{u}}(\Fq)}^{\GL_n(\Fq)}\mathds{1}_{\mathbb Z_p}.
	 \end{equation*}
\end{proof}

\begin{corollary}
 Let $G=\GL_n$ and $w\in F^+$. Let $v\in F^+$ such that $\supp(v)=\supp(w)$ and $v=s_{\alpha_1}\cdots s_{\alpha_{r}}\in W$ with $s_{\alpha_t}$ all distinct. Let $R=\mathbb Z/p^m\mathbb Z$ or $\mathbb Z_p$, $m>0$. Set $I=\supp(w), I_u=\supp(u),$ and $P_I=B^*W_IB^*$.
Then  
for $k\neq \ell(w)$,
\begin{equation*}
	H_{\et, c}^k\left(X(w),R\right)=0
\end{equation*}	
and
	 \begin{equation*}
	 	 H^{\ell(w)}_{\et, c}\left(X(w),R \right)\cong H^{\ell(v)}_{\et, c}\left(X(v),R \right) \cong \Ind_{B^*(\Fq)}^{\GL_n(\Fq)}\mathds{1}_R\Bigg/\sum_{\substack{u\prec v\\ \ell(u)=1}}\Ind_{P_{I_{u}}(\Fq)}^{\GL_n(\Fq)}\mathds{1}_R.
	 \end{equation*}	 
	 In particular, 
	 
	 \[H^{\ell(w)}_{\et, c}\left(X(w),R\right)\cong  \Ind_{P_I(\Fq)}^{\GL_n(\Fq)}{\normalfont\text{St}_{L_I}}, \]
	  where $\normalfont\text{St}_{L_I}$ is the Steinberg representation for $L_I(\Fq)$ with coefficients in $R$.

\end{corollary}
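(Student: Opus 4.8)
The plan is to run the double induction of Sections \ref{sws}--\ref{steps} directly on the open Deligne--Lusztig variety $X(w)$ instead of on $\Xo(w)$; the only new feature is that the step lowering $\ell_{F^+}(w)$, which for $\Xo$ was an outright isomorphism of cohomology, now shifts the cohomological degree by one. Concretely, if $w$ already has an expression with pairwise distinct simple reflections then $w$ represents, up to the operations $C,K,R$, the element $v\in W$ of the statement, and the assertion for $X(v)$ is exactly Theorem \ref{modpnX(w)} (for $R=\mathbb Z/p^m\mathbb Z$) and Corollary \ref{ZpX(w)} (for $R=\mathbb Z_p$). Otherwise $w$ has a repeating simple reflection, and by the combinatorics of Geck--Pfeiffer recalled in Section \ref{steps} one may, using $C,K,R$, bring $w$ into the form $svs$ with $s\in S$; removing one of the two copies of $s$ lowers $\ell_{F^+}$ by one. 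Iterating, $w$ is transformed into $v$ after finitely many steps. Since $\supp(svs)=\supp(sv)=\supp(vs)$, the support is preserved throughout, so $I=\supp(w)=\supp(v)$ and the total number of length-lowering steps is $\ell(w)-\ell(v)=\ell(w)-|\supp(w)|$.

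First I would observe that each of $C,K,R$ induces a $\GL_n(\Fq)$-equivariant \emph{isomorphism} $X(w)\overset{\sim}{\to}X(\cdot(w))$ (hence does not change $H^\bullet_{\et,c}$, in the same degrees). For $K$ and $R$ this is immediate from $O(s)\times_XO(t)\cong O(st)$ and $O(s)\times_XO(t)\times_XO(s)\cong O(sts)$ together with $O(st)=O(ts)$, resp. $O(sts)=O(tst)$, in $X\times X$; for $C$ one has the explicit rotation isomorphism $X(sw')\overset{\sim}{\to}X(w's)$, $(B_0,B_1,\dots,B_r)\mapsto(B_1,\dots,B_r,FB_1)$, with inverse $(C_0,\dots,C_r)\mapsto(F^{-1}C_{r-1},C_0,\dots,C_{r-1})$, in the same spirit as the constructions of Section \ref{P1}.

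The heart of the matter is the length-lowering step. Given $w=svs$ with $v\in\hat F^+$, $s\in S$, restrict the $\mathbb P^1$-bundle $\pi\colon\Xo(svs)\to\Xo(vs)$ of Lemma \ref{wsP1} over the open dense $X(vs)\subseteq\Xo(vs)$, and write $\bar\pi\colon\bar E:=\pi^{-1}(X(vs))\to X(vs)$ for the resulting $\mathbb P^1$-bundle. Imposing degeneracy at the first, resp.\ the last, $s$-position of $svs$ cuts out two sections of $\bar\pi$ (the section of Lemma \ref{wsP1} and the ``$B_0=F^{-1}B_{r+1}$'' section); they are disjoint over $X(vs)$ because the last $s$-position of $vs$ is non-degenerate there, and their open complement in $\bar E$ is precisely $X(svs)$ (that the fibre of $\bar\pi$ over a point of $X(vs)$ is a full $\mathbb P^1$ with no further constraint on $B_0$ uses the rank-one relation $(B,B'),(B'',B')\in\overline{O(s)}\Rightarrow(B,B'')\in\overline{O(s)}$ as in the proof of Lemma \ref{wsP1}). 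Thus $\pi\colon X(svs)\to X(vs)$ is a $\mathbb G_m$-bundle. Now one \emph{cannot} appeal to homotopy invariance, which fails for $\mathbb Z/p\mathbb Z$-coefficients in characteristic $p$; instead I would compute $R\pi_!R$ sheaf-theoretically. Letting $\Sigma\hookrightarrow\bar E$ be the union of the two sections and $j\colon X(svs)\hookrightarrow\bar E$ the open complement, the excision sequence $0\to j_!R\to R_{\bar E}\to R_\Sigma\to0$ pushed forward along the proper map $\bar\pi$ gives a triangle $R\pi_!R\to R\bar\pi_*R_{\bar E}\to R\bar\pi_*R_\Sigma\xrightarrow{[1]}$. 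Here $R\bar\pi_*R_{\bar E}=R_{X(vs)}$ because $H^i(\mathbb P^1,R)=0$ for $i>0$ --- this is Corollary \ref{ZpnZ} applied to the Coxeter element of $\GL_2$, where $\Xo\cong\mathbb P^1$ --- while $R\bar\pi_*R_\Sigma=R_{X(vs)}^{\oplus2}$ and the map $R_{X(vs)}\to R_{X(vs)}^{\oplus2}$ is the diagonal; therefore $R\pi_!R\cong R_{X(vs)}[-1]$ and
\[
H^k_{\et,c}\big(X(svs),R\big)\;\cong\;H^{k-1}_{\et,c}\big(X(vs),R\big)
\]
$\GL_n(\Fq)$-equivariantly, for $R=\mathbb Z/p^m\mathbb Z$; the case $R=\mathbb Z_p$ then follows by the same Mittag--Leffler argument as in Corollary \ref{ZpX(w)}. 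I expect this to be the only real obstacle: extracting the correct degree shift \emph{without} homotopy invariance, by routing through the proper $\mathbb P^1$-compactification and Corollary \ref{ZpnZ}.

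Assembling the pieces, the reduction yields a $\GL_n(\Fq)$-equivariant isomorphism $H^k_{\et,c}(X(w),R)\cong H^{\,k-(\ell(w)-\ell(v))}_{\et,c}(X(v),R)$, and Theorem \ref{modpnX(w)} (resp.\ Corollary \ref{ZpX(w)}) identifies the right-hand side with $0$ unless $k=\ell(w)$, in which degree it is
$\Ind_{B^*(\Fq)}^{\GL_n(\Fq)}\mathds 1_R\big/\sum_{u\prec v,\,\ell(u)=1}\Ind_{P_{I_u}(\Fq)}^{\GL_n(\Fq)}\mathds 1_R\cong\Ind_{P_I(\Fq)}^{\GL_n(\Fq)}\St_{L_I}$; since the support is unchanged under all the operations, the parabolic $P_I$ and Levi $L_I$ occurring here are the ones attached to $\supp(w)$, which is the assertion. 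As a variant one may instead argue entirely through the stratification spectral sequence of Section \ref{corollaryX(w)}, extended to arbitrary $w\in F^+$ (whose construction only uses that $\Xo(w)$ has normal crossings at infinity, cf.\ \cite[Lemma 9.11]{DL}): by Corollary \ref{ZpnZ} it degenerates at $E_2$ onto the subword complex, and the $\mathbb G_m$-bundle computation above is exactly what identifies that complex, up to the shift $\ell(w)-\ell(v)$, with the one of Proposition \ref{seqforindrep} for $v$.
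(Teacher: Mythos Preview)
Your main geometric reduction has a genuine gap. You claim that $X(svs)$ is the complement of two sections in $\bar E=\pi_1^{-1}(X(vs))$, hence a $\mathbb G_m$-bundle over $X(vs)$; but $\pi_1$ does \emph{not} carry $X(svs)$ into $X(vs)$. For $(B_0,\dots,B_{r+1},FB_0)\in X(svs)$ one only knows $(B_{r+1},FB_0)\in O(s)$ and $(FB_0,FB_1)\in O(s)$, which gives $(B_{r+1},FB_1)\in\overline{O(s)}$, possibly degenerate. Concretely, take $G=\GL_2$, $v=e$: any $F$-stable Borel $B_1$ together with any $B_0\neq B_1$ with $FB_0\neq B_1$ gives a point of $X(ss)$ whose image under $\pi_1$ is $(B_1,FB_1)\in X(e)\subset\Xo(s)\setminus X(s)$. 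So $X(svs)\cap\bar E$ is indeed a $\mathbb G_m$-bundle over $X(vs)$, but the missing piece $X(svs)\setminus\bar E$ is nonempty and must be analysed separately; your triangle computes $H^*_{\et,c}$ of $X(svs)\cap\bar E$, not of $X(svs)$. (A smaller issue: your rotation ``inverse'' for $C$ uses $F^{-1}$, which is not a morphism of schemes; the map $X(sw')\to X(w's)$ is only a universal homeomorphism. That is harmless for \'etale cohomology, but it is not an isomorphism of varieties as you state.)

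The paper's proof avoids this entirely and is much shorter. It runs the stratification spectral sequence of Section~\ref{corollaryX(w)} for \emph{arbitrary} $w\in F^+$ (the construction needs only the normal-crossings stratification of $\Xo(w)$), uses Corollary~\ref{ZpnZ} to degenerate at $E_2$ onto the single row $E_1^{\bullet,0}$, and then observes---by a purely combinatorial chain-homotopy argument on subword complexes, by induction on $\ell(w)-\ell(v)$ as in \cite[Proposition~2.11]{Orlik18}---that this row for $w$ is, up to the shift $\ell(w)-\ell(v)$, chain-homotopy equivalent to the row~(\ref{stseq}) for $v$. No geometry of $X(w)$ beyond Corollary~\ref{ZpnZ} is used; in particular there is no $\mathbb G_m$-bundle step. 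Your ``variant'' paragraph already points at this spectral-sequence route, but you tie it back to the $\mathbb G_m$-bundle computation, which is neither needed nor (as above) correct.
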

\begin{proof}
 Analogous to \cite[Proposition 2.11]{Orlik18}, by induction on $\ell(w)-\ell(v)$, the complex (\ref{stseq}) we get for $w\in F^+$ is homotopic to the complex (\ref{stseq}) for any $v\in F^+$ such that $ \supp(v)=\supp(w)$. The rest follows from Theorem \ref{modpnX(w)} and Corollary \ref{ZpX(w)}.
\end{proof}

\appendix
\section{$F$-singularities and pseudo-rational singularities of $\overline{X(w)}$}\label{AppendixA}

In \cite[Theorem 2.2]{LRT}, it was shown that Schubert varieties are globally $F$-regular. As already seen in \cite[\textsection 4.3]{Lusztig81}, certain local properties of Schubert varieties can be carried over to local properties of the Zariski closure of Deligne--Lusztig varieties $\overline{X(w)}$ via the Lang map. This allows us to show that  $\overline{X(w)}$ has pseudorational singularities. There is a generalization of the result of \cite[Theorem 3.2.8]{CR11} relaxing the requirement of smoothness to pseudo-rational singularity \cite[Theorem 8.13]{Kovacs}. This allows us to obtain the result in Proposition \ref{grouprel} for all connected reductive $G$ defined over a finite field $\Fq$.

Throughout this section, let $G$ be an arbitrary reductive group as defined in Section \ref{Gdefn}.

\subsection{Review of definitions}
We recall some relevant definitions of $F$-singularities of commutative rings and schemes in positive characteristic. More detailed discussions can be found in \cite[Chapter 3]{Smith19}, for example.

\begin{dfn}[{\cite[p.14, Definition 1.17]{Smith19}}]
\normalfont
	Let $R$ be a ring of characteristic $p>0$ and $F:R\to R$ be the $p$-Frobenius morphism. We say that $R$ is \emph{$F$-finite} if $F$ is a finite map. 
\end{dfn}
By \cite[p.47]{Smith19}, finitely generated algebras over a perfect field as well as their localizations and completions are always $F$-finite. Thus if $Y$ is a scheme locally of finite type over a perfect field, then the local rings $\Osh_{Y, y}$ for all $y\in Y$ are $F$-finite.

\begin{dfn}[{\cite[p.128]{HH89}}]
\normalfont
	Let $R$ be a $F$-finite ring. If for every $r\in R$ not contained in any minimal prime ideals of $R$, 
	there exists some positive integer $e$ such that the map of $R$-modules $R\to R^{1/{p^e}}$ given by $1\mapsto r^{1/{p^e}}$ splits, 
	then we say that $R$ is \emph{strongly $F$-regular}.
\end{dfn}
\begin{dfn}
\normalfont
Let $Y$ be a locally of finite type in characteristic $p>0$ with geometric Frobenius $F$. We say that $Y$ is \emph{strongly $F$-regular} if the local ring $\Osh_{Y,y}$ is strongly $F$-regular for all $y\in Y$.
\end{dfn}

The property of being strongly $F$-regular is indeed a local condition by \cite[p. 56, Proposition 4.12]{Smith19} cf. \cite[Theorem 5.5]{HH94}. Note that in the literature, the strongly $F$-regular property for schemes may also be called \emph{locally $F$-regular} in order to distinguish from the \emph{globally $F$-regular} property, which is defined via the section ring of an ample line bundle \cite{Smith00}. 

\begin{dfn}
\normalfont
	Let $Y$ be a locally of finite type in characteristic $p>0$ with geometric Frobenius $F$.  Then $Y$ is said to be \emph{ $F$-rational} when the local ring $\Osh_{Y,y}$ is $F$-rational for all $y\in Y$.
\end{dfn}

The definition of \emph{$F$-rationality} for a ring of characteristic $p>0$ involves more definitions, which we will not directly work with, so we refer to \cite[Definition 1.3]{Smith97} for its precise statement. 
It then follows from the definitions that strongly $F$-regular rings are $F$-rational.

\begin{remark}
	In the literature, the notion of \emph{excellent} rings and schemes are sometimes used to make more general statements. We shall remark that   locally of finite type $\Fpbar$-schemes are excellent by \cite[Prop 7.8.6, p. 217]{EGA42}. As localization of local sections of excellent schemes remain to be excellent rings, all the local rings we work with in this section are in fact excellent rings. 
\end{remark}

\subsection{Cohomology of $\overline{X(w)}$}
For any $w\in W$, the Zariski closure $\overline{B^*wB^*}$  of a Bruhat cell in $G$ are \emph{large Schubert varieties} in the sense of \cite[p. 956]{BrionThomsen}. We shall use this fact to show the Zariski closure of Deligne--Lusztig varieties have pseudo-rational singularities. As before, we work in the more general setting with generalized Deligne--Lusztig varieties.

Recall that we have the natural quotient map $\alpha: F^+\to W$. Let $w_1,w_2\in F^+$ and $v\in F^+$ such that $v$ corresponds to a reduced expression of an element in $\alpha(v)\in W$. Write $w_1=s_{i_1}\cdots s_{i_{l(w_1)}}$ and  $w_2=s_{i_{l(w_1)+2}}\cdots s_{i_{l(w_1)+l(w_2)+1}}$. We may then fix a notation:

\begin{multline*}
	\Xo(w_1\underline{v}w_2):=\left\{(B_0,\ldots,B_{l(w_1)+l(w_2)+1})\in X^{l(w_1)+l(w_2)+2}\right\vert \\  
	 (B_{j-1}, B_j)\in \overline{O(s_{i_j})},j=1,\ldots, i_{l(w_1)},
	\\
	 (B_{j-1}, B_j)\in \overline{O(s_{i_j})}, j=l(w_1)+2, \ldots, l(w_1)+l(w_2)+1,
	\\
	\left.
	(B_{l(w_1)}, B_{l(w_1+1)})\in \overline{O(v)}, B_{l(w_1)+l(w_2)+1}=FB_0
	 \right\}.
\end{multline*}
Note that if $w_1$ and $w_2$ are trivial, then $\Xo(w_1\underline{v}w_2)=\overline{X(v)}$.
\begin{dfn}\label{partialfrob}
\normalfont
For any integer $r>0$, define a partial Frobenius endomrophism $F_1: G^{r+1}\to G^{r+1}$ on the $r+1$-fold fibre product of $G$ over $\Fpbar$ by
\[
F_1(g_0,\ldots,g_r):=(g_1,\ldots,g_r, F(g_0)).
\]
\end{dfn}

Observe that $F_1^{r+1}$ is the Frobenius endomorphism of $G^{r+1}$, and that $G^{r+1}$ is again connected reductive. Denote the associated Lang map of $F_1$ by $\mathcal L_1$, which is defined by $h\mapsto h^{-1}F_1(h)$ for all $h\in G^{r+1}(\Fpbar)$. It follows from the Lang--Steinberg theorem that $\mathcal L_1$ is surjective. 
By \cite[Proposition 2.3.3]{DMR}, we get an isomorphism
\begin{multline}\label{w1vw2}
	\Xo(w_1\underline{v}w_2)\overset{\sim}{\longrightarrow}
	\left\{
	h\in G^{r+1}\right\vert
 \mathcal L_1(h)\in
 \\	\left. \overline{B^*\dot{s_1}B^*}\times \cdots\times \overline{B^*\dot{s_{i_{l(w_1)}}}B^*}\times \overline{B^*\dot{v}B^*}\times  \overline{B^*\dot{s_{l(w_1)+2}}B^*}\times\cdots \right.\\ \left.\cdots \times \overline{B^*\dot{s_{l(w_1)+l(w_2)+1}}B^*}
	\right\}\big/(B^*)^{r+1},
\end{multline}
where $r=l(w_1)+l(w_2)+1$.

\begin{proposition}\label{locallystrongly}
	The scheme $\Xo(w_1\underline{v}w_2)$ for $w_1, w_2\in F^+$ and $v\in F^+$ being a reduced expression for an element in $W$, is strongly $F$-regular. 
\end{proposition}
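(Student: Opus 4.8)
The plan is to reduce the statement to the known fact that large Schubert varieties (and finite products thereof) are strongly $F$-regular, and then to show that strong $F$-regularity descends along the explicit presentation \eqref{w1vw2} of $\Xo(w_1\underline{v}w_2)$ as a quotient of a Lang-type preimage. First I would invoke \cite[Theorem 2.2]{LRT} (or \cite{BrionThomsen} for the ``large'' version) to see that each factor $\overline{B^*\dot{s}B^*}$ and $\overline{B^*\dot{v}B^*}$ in $G$ is strongly $F$-regular; since strong $F$-regularity is preserved under taking products over a perfect field (it is local, and the completed local rings of a product are formal power series rings over tensor products of the completed local rings of the factors, for which strong $F$-regularity is known to be stable, see e.g.\ \cite[\S 4]{HH94} cf.\ \cite{Smith00}), the product
\[
Z:=\overline{B^*\dot{s_1}B^*}\times\cdots\times\overline{B^*\dot{s_{i_{l(w_1)}}}B^*}\times\overline{B^*\dot{v}B^*}\times\overline{B^*\dot{s_{l(w_1)+2}}B^*}\times\cdots\times\overline{B^*\dot{s_{r}}B^*}
\]
inside $G^{r+1}$ is strongly $F$-regular, where $r=l(w_1)+l(w_2)+1$.

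Next I would analyze the map appearing in \eqref{w1vw2}. The Lang map $\mathcal L_1\colon G^{r+1}\to G^{r+1}$, $h\mapsto h^{-1}F_1(h)$, is étale (its differential is an isomorphism because $dF_1=0$, as in the classical Lang--Steinberg argument), so the preimage $\mathcal L_1^{-1}(Z)$ is étale over $Z$ and hence strongly $F$-regular (strong $F$-regularity is preserved under étale, indeed smooth, morphisms, by \cite[Proposition 4.12]{Smith19} or \cite[Theorem 5.5]{HH94}). Finally, the isomorphism \eqref{w1vw2} identifies $\Xo(w_1\underline{v}w_2)$ with the quotient of $\mathcal L_1^{-1}(Z)$ by the free action of $(B^*)^{r+1}$ — more precisely, the quotient map is a $(B^*)^{r+1}$-torsor, in particular smooth and surjective. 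Since strong $F$-regularity is a local property that can be checked after a faithfully flat smooth (even just faithfully flat with regular fibres, or simply after the smooth torsor map) base change, and $\mathcal L_1^{-1}(Z)$ is strongly $F$-regular, it follows that $\Xo(w_1\underline{v}w_2)$ is strongly $F$-regular.

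The main obstacle I anticipate is the careful justification that strong $F$-regularity descends along the torsor map $\mathcal L_1^{-1}(Z)\to \Xo(w_1\underline{v}w_2)$ rather than merely ascends along it: what one actually wants is that if the total space of a $(B^*)^{r+1}$-torsor is strongly $F$-regular then so is the base. This is where I would be most careful — one route is to note the map is smooth and surjective, and that strong $F$-regularity of $\Osh_{Y,y}$ can be detected by passing to the local ring of any point of the smooth cover lying over $y$ (using that a smooth local homomorphism between $F$-finite local rings reflects strong $F$-regularity, since it is faithfully flat with geometrically regular fibres and the relevant splittings descend); alternatively one can work Zariski-locally on the base where the torsor trivializes, reducing to the product case $\mathcal L_1^{-1}(Z)|_U\cong U\times (B^*)^{r+1}$ and again using that $B^*\cong \mathbb A^N$ as a variety is strongly $F$-regular together with stability under products. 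I would also remark that, either way, this shows more: $\Xo(w_1\underline{v}w_2)$ is in fact $F$-rational, hence (by \cite[Theorem]{Smith97} and the excellence remark) has pseudo-rational singularities, which is the form of the statement needed to apply \cite[Theorem 9.15]{Kovacs} in Proposition \ref{grouprel}.
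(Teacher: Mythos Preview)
Your proposal is correct and follows essentially the same route as the paper: strong $F$-regularity of the large Schubert varieties (via \cite{BrionThomsen} or \cite{LRT}), stability under products, ascent along the \'etale Lang map $\mathcal L_1$, and descent along the faithfully flat $(B^*)^{r+1}$-torsor $\pi$. The paper invokes \cite[Theorem 3.6]{Aberbach01} for the \'etale ascent step and \cite[Theorem 5.5]{HH94} for the descent along $\pi$, and your anticipated ``obstacle'' is exactly the content of the latter citation, so there is no gap.
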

\begin{proof}
	By  \cite[p. 958, Corollary 4.1]{BrionThomsen}, for all $w\in W$, $\overline{B^*\dot{w}B^*}$ is strongly $F$-regular. As strongly $F$-regularity is preserved under faithfully flat descent \cite[Theorem 5.5]{HH94}, we see that the product
	\[
	\overline{B^*\dot{s_1}B^*}\times \cdots\times \overline{B^*\dot{s_{i_{l(w_1)}}}B^*}\times \overline{B^*\dot{v}B^*}\times  \overline{B^*\dot{s_{l(w_1)+2}}B^*}\times\cdots\times  \overline{B^*\dot{s_{l(w_1)+l(w_2)+1}}B^*}
	\]
	is again strongly $F$-regular. Denote this product by $Z_{v}$, and we have the following diagram
	\begin{equation}
\begin{tikzcd}
\Xo(w_1\underline{v}w_2) & \mathcal{L}_1^{-1}\left(Z_v\right)\arrow[r, "\mathcal L_1"]\arrow[l,"\pi"'] 
& Z_v.
\end{tikzcd}		
	\end{equation}

	Since $\mathcal L_1$  is \'{e}tale, the fibres of $\mathcal L_1$ are smooth. Thus  $\mathcal{L}_1^{-1}\left(Z_v\right)$ is strongly $F$-regular by  \cite[Theorem 3.6]{Aberbach01}.
We apply to $\pi$ the faithfully flat descent of strong $F$-regularity \cite[Theorem 5.5]{HH94}. 
Therefore $\Xo(w_1\underline{v}w_2)$ is strongly $F$-regular. 	
\end{proof}

\begin{remark}
(i)
The argument above is analogous to the argument in \cite[\textsection 4.3]{Lusztig81}. In particular, Proposition \ref{locallystrongly} implies that $\Xo(w_1\underline{v}w_2)$ are normal and Cohen--Macaulay cf. \cite[Proposition 2.3.5]{DMR}. The reason is that $F$-rational excellent local rings are normal and Cohen--Macaulay \cite[Theorem 4.2, Theorem 6.27]{HH94}.

(ii)
Instead of using the result that the Zariski closure of Bruhat cells are strongly $F$-regular, one may alternatively use the fact that the Schubert varieties $\overline{B^*\dot{w}B^*}/B^*$ are globally $F$-regular, which is shown in  \cite[Theorem 2.2]{LRT} and \cite[Theorem 8]{H06}. 

The global $F$-regularity of $\overline{B^*\dot{w}B^*}/B^*$ implies that it is strongly $F$-regular \cite[p. 75, Proposition 6.22]{Smith19}. Consider the quotient map $\pi: \overline{B^*\dot{w}B^*} \to \overline{B^*\dot{w}B^*}/B^*$. Since $\pi$ is a Zariski locally trivial fibre bundle with all fibres isomorphic to $B^*$,  it is a faithfully flat morphism. We may apply faithfully flat descent of strong $F$-regularity \cite[Theorem 5.5]{HH94} because $B^*$ is a smooth $\Fpbar$-scheme. Thus $\overline{B^*\dot{w}B^*}$ is strongly $F$-regular. The rest of the proof is the same as the proof of Proposition \ref{locallystrongly}.

\end{remark}

\begin{proposition}
\label{cohequiv}
	For any $w\in F^+$ of the form $w=w_1vw_2$, where $w_1,w_2,v\in F^+$ and $v$ has the same expression with a reduced expression of $\alpha(v)\in W$. Let $v'\in F^+$ be another reduced expression of $\alpha(v)$, then we have the following isomorphisms of cohomology for structure sheaves and canonical bundles: 
	\[
	H^k\left(\Xo(w_1vw_2),\Osh_{\Xo(w_1vw_2)}\right)\overset{\sim}{\longrightarrow}H^k\left(\Xo(w_1v'w_2),\Osh_{\Xo(w_1v'w_2)}\right)
	\]
	and 
	\[
	H^k\left(\Xo(w_1vw_2),\omega_{\Xo(w_1vw_2)}\right)\overset{\sim}{\longrightarrow}H^k\left(\Xo(w_1v'w_2),\omega_{\Xo(w_1v'w_2)}\right),
	\]
	for all $k\geq 0$.
\end{proposition}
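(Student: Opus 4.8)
The strategy is to reduce everything to the already-established Proposition~\ref{locallystrongly} together with Kov\'{a}cs' generalization \cite[Theorem 9.15]{Kovacs} of the Chatzistamatiou--R\"{u}lling theorem \cite[Theorem 3.2.8]{CR11}, exactly as in the proof of Proposition~\ref{grouprel} but now without assuming smoothness of the target. First I would observe that both $v$ and $v'$ are reduced expressions of the same element $\alpha(v)\in W$, so the tautological projections
\[
\Xo(w_1vw_2)\longrightarrow \Xo(w_1\underline{v}w_2)\quad\text{and}\quad \Xo(w_1v'w_2)\longrightarrow \Xo(w_1\underline{v}w_2)
\]
to the scheme $\Xo(w_1\underline{v}w_2)$ introduced above (in which the middle factor is replaced by $\overline{O(\alpha(v))}$, a single closed Bruhat-cell-type stratum) are proper morphisms. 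Properness follows by expressing each as a base change of the resolution of singularities $\overline{O}(s_{i_1},\ldots,s_{i_{\ell(v)}})\to \overline{O(\alpha(v))}$ from \cite[\S 9.1]{DL}, along the same cartesian-square argument used in Proposition~\ref{grouprel}(ii). Moreover each of these maps restricts to an isomorphism on the open dense locus $X(w_1vw_2)$ (resp.\ $X(w_1v'w_2)$), hence is birational.

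Next I would record that $\Xo(w_1\underline{v}w_2)$ has pseudo-rational singularities: by Proposition~\ref{locallystrongly} it is strongly $F$-regular, hence $F$-rational, and $F$-rational excellent local rings have pseudo-rational singularities (this is the content of \cite[Theorem 9.15]{Kovacs} together with the standard implication in \cite{HH94}); note that $\Xo(w_1\underline{v}w_2)$ is excellent, being of finite type over $\Fpbar$. The two source schemes $\Xo(w_1vw_2)$ and $\Xo(w_1v'w_2)$ are smooth and projective by the construction in Section~\ref{generalsmthcompt} (Lemma~\ref{XoF+smooth} applies to reduced-expression words), and in particular they also have pseudo-rational singularities. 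Now apply Kov\'{a}cs' theorem \cite[Theorem 9.15]{Kovacs} to the birational proper morphism $\Xo(w_1vw_2)\to \Xo(w_1\underline{v}w_2)$: it yields isomorphisms
\[
H^k\!\left(\Xo(w_1vw_2),\Osh_{\Xo(w_1vw_2)}\right)\overset{\sim}{\longrightarrow} H^k\!\left(\Xo(w_1\underline{v}w_2),\Osh_{\Xo(w_1\underline{v}w_2)}\right)
\]
for all $k\ge 0$, and the analogous isomorphisms for the canonical (dualizing) sheaves $\omega$. Doing the same for $v'$ and composing the first isomorphism with the inverse of the second gives the two desired isomorphisms.

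The main obstacle I anticipate is making precise that Kov\'{a}cs' result genuinely applies in this setting: one must check that $\omega_{\Xo(w_1\underline{v}w_2)}$ is the appropriate dualizing object (the scheme is Cohen--Macaulay and normal by the remark following Proposition~\ref{locallystrongly}, so the dualizing complex is concentrated in one degree and $\omega$ behaves as a genuine canonical sheaf), and that the morphisms in question satisfy the hypotheses of \cite[Theorem 9.15]{Kovacs} — properness and birationality are clear, but one should confirm that no further condition (e.g.\ on the behavior of $\omega$ under pullback, or a rational-singularities hypothesis on the source) is violated; since the sources are smooth this is automatic. A secondary point worth spelling out, rather than a real obstacle, is the verification that the middle stratum $\overline{O(\alpha(v))}$ really does fit into the cartesian square exhibiting $\Xo(w_1vw_2)\to\Xo(w_1\underline{v}w_2)$ as a base change of the Bott--Samelson resolution, and that this identifies $\Xo(w_1\underline vw_2)$ with the quotient presentation \eqref{w1vw2} so that Proposition~\ref{locallystrongly} is directly applicable.
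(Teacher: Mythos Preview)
Your proposal is correct and follows essentially the same approach as the paper: both reduce to the intermediate scheme $\Xo(w_1\underline{v}w_2)$, use Proposition~\ref{locallystrongly} to obtain pseudo-rational singularities there, exhibit the two maps as proper birational via base change of the Bott--Samelson resolution, and then apply \cite[Theorem 9.15]{Kovacs} twice and compose. The only minor discrepancy is that the implication strongly $F$-regular $\Rightarrow$ pseudo-rational should be attributed to \cite[Theorem 3.1]{Smith97} rather than \cite{HH94} or \cite{Kovacs}.
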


\begin{proof}
For schemes locally of finite type over $\Fpbar$, the local rings being strongly $F$-regular implies them being pseudo-rational \cite[Theorem 3.1]{Smith97}. 
	Then by Proposition \ref{locallystrongly}, we know that the scheme $\Xo(w_1\underline{v}w_2)$ has pseudo-rational singularities cf. \cite[Definition 9.4]{Kovacs}. Furthermore, the $\Fpbar$-schemes $\Xo(w_1vw_2)$ and $\Xo(w_1v'w_2)$ are regular, so they also have pseudo-rational singularities. 
	
	By assumption, we know that $v$ and $v'$ have expressions corresponding to two different reduced expressions of $\alpha(v)$. Thus $\overline{O(v)}$ is isomorphic to $\overline{O(v')}$. Furthermore, $\overline{O}(v)$ and $\overline{O}(v')$ give two smooth compactifications of $\overline{O(v)}$. Hence we have the following cartesian squares via projections
	\begin{equation*}
\begin{tikzcd}
	\Xo(w_1vw_2) \arrow[r]\arrow[d,"f"'] &\overline{O}(v)\arrow[d]\\
	\Xo(w_1\underline{v}w_2)\arrow[r] & \overline{O(v)},
\end{tikzcd}
\end{equation*}
and
\begin{equation*}
\begin{tikzcd}
	\Xo(w_1v'w_2) \arrow[r]\arrow[d,"f'"'] &\overline{O}(v')\arrow[d]\\
	\Xo(w_1\underline{v}w_2)\arrow[r] & \overline{O(v)},
\end{tikzcd}
\end{equation*}
where $f$ and $f'$ are proper birational morphisms of $\Fpbar$-schemes. Via  \cite[Theorem 8.13]{Kovacs} we see that $\Xo(w_1vw_2)$ (resp. $\Xo(w_1v'w_2)$) has isomorphic cohomology groups for the structure sheaf and the canonical bundle with $\Xo(w_1\underline{v}w_2)$ via $f$ (resp. $f'$). 
	Therefore we have 
		\[
	H^k\left(\Xo(w_1vw_2),\Osh_{\Xo(w_1vw_2)}\right)\overset{\sim}{\longrightarrow}H^k\left(\Xo(w_1v'w_2),\Osh_{\Xo(w_1v'w_2)}\right)
	\]
	and 
	\[
	H^k\left(\Xo(w_1vw_2),\omega_{\Xo(w_1vw_2)}\right)\overset{\sim}{\longrightarrow}H^k\left(\Xo(w_1v'w_2),\omega_{\Xo(w_1v'w_2)}\right),
	\]
	for all $k\geq 0$.
\end{proof}

\begin{remark}
As an upshot, Proposition \ref{cohequiv} implies Proposition \ref{grouprel} if we set $v=sts$ and $v'=tst$ (resp. $v=st$ and $v'=ts$) whenever $sts=tst$ (resp. $st=ts$) in $W$.  Furthermore, Proposition \ref{locallystrongly} and \ref{cohequiv} work for any connected reductive group $G/\Fpbar$ defined over $\Fq$.
\end{remark}

	\printbibliography
	\medskip
	\textsc{ Universit\"{a}t Duisburg-Essen, 45127 Essen, Germany}
\smallskip
  \textit{E-mail address:}
\email{yingying.wang@uni-due.de}
\end{document}